\theoremstyle{plain}
\newtheorem{thm}{Theorem}[section]
\newtheorem{prop}[thm]{Proposition}
\newtheorem{lem}[thm]{Lemma}
\theoremstyle{definition}
\newtheorem{defn}[thm]{Definition}
\newtheorem{nota}[thm]{Notation}
\theoremstyle{plain}
\newtheorem{thms}{Theorem}[subsection]
\newtheorem{props}[thms]{Proposition}
\newtheorem{lems}[thms]{Lemma}
\newtheorem{corls}[thms]{Corollary}
\theoremstyle{definition}
\begin{document} 
\title[A minimal set of generators for the polynomial algebra]
{A minimal set of generators for the polynomial algebra of five variables in a generic degree}

\author{Nguy\~\ecircumflex n Sum, Ph\d am \DJ\~\ocircumflex\ T\`ai }
\address{Department of Mathematics and Applications, S\`ai G\`on University, 273 An D\uhorn \ohorn ng V\uhorn \ohorn ng, District 5, H\`\ocircumflex\ Ch\'i Minh city, Viet Nam}
 
\email{nguyensum@sgu.edu.vn; phamdotai9a13@gmail.com}

\footnotetext[1]{2000 {\it Mathematics Subject Classification}. Primary 55S10; 55S05.}
\footnotetext[2]{{\it Keywords and phrases:} Steenrod squares, Peterson hit problem, polynomial algebra, modular representation.}

%-------------------------------------------------
\bigskip
\begin{abstract}
Let $P_k$ be the graded polynomial algebra $\mathbb F_2[x_1,x_2,\ldots ,x_k]$ over the prime field with two elements, $\mathbb F_2$, with the degree of each $x_i$ being 1. We study the {\it hit problem}, set up by Frank Peterson, of finding a minimal set of generators for $P_k$ as a module over the mod-$2$ Steenrod algebra, $\mathcal{A}.$ It is an open problem in Algebraic Topology. In this paper, we explicitly determine a minimal set of $\mathcal{A}$-generators for $P_5$ in the case of the generic degree $m = 2^{d}$ for all $d \geqslant 8$.
\end{abstract}
\maketitle

%======================================
\section{Introduction}\label{s1} 

Let $V_k$ be an elementary abelian 2-group of rank $k$ and let $BV_k$ be the classifying space of $V_k$.  Then, 
$$P_k:= H^*(BV_k) \cong \mathbb F_2[x_1,x_2,\ldots ,x_k],$$ a polynomial algebra in  $k$ variables $x_1, x_2, \ldots , x_k$, each of degree 1. Here, the cohomology is taken with coefficients in the prime field $\mathbb F_2$ with two elements. The algebra $P_k$ is a module over the mod-2 Steenrod algebra, $\mathcal A$.  The action of $\mathcal A$ on $P_k$ is determined by the formula
$$Sq^t(x_i) = \begin{cases} x_i, &t=0,\\ x_i^2, &t=1,\\ 0, &\text{otherwise,}
\end{cases}$$
and subject to the Cartan formula
$$Sq^m(uv) = \sum_{i=0}^mSq^i(u)Sq^{m-i}(v),$$
for $u,\, v \in P_k$ (see Steenrod and Epstein~\cite{st}).

A polynomial $g$ in $P_k$ is called {\it hit} if it can be written as a finite sum $g = \sum_{i\geqslant 0}Sq^{i}(\ell_i)$ for some polynomials $\ell_i \in P_k$. That means $g$ belongs to  $\mathcal{A}^+P_k$, where $\mathcal{A}^+$ is the augmentation ideal of $\mathcal A$.

We are interested in the {\it Peterson hit problem} of determining a minimal set of generators for the polynomial algebra $P_k$ as a module over the Steenrod algebra, $\mathcal A$. That means, we need to determine a basis of the $\mathbb F_2$-vector space 
$$QP_k := P_k/\mathcal A^+P_k = \mathbb F_2 \otimes_{\mathcal A} P_k.$$ 

This problem was first studied by Peterson~\cite{pe}, Wood~\cite{wo}, Singer~\cite {si1}, and Priddy~\cite{pr}, who showed its relation to several classical problems in Algebraic Topology. Then, this problem  was investigated by Carlisle and Wood~\cite{cw}, Crabb and Hubbuck~\cite{ch}, Janfada and Wood~\cite{jw1}, Ly and T\'in \cite{Tin20}, Kameko~\cite{ka}, Mothebe \cite{mo}, Nam~\cite{na}, Ph\'uc and Sum \cite{sp2}, Repka and Selick~\cite{res}, Silverman~\cite{sl,sl2}, Silverman and Singer~\cite{ss}, Singer~\cite{si2}, Sum and T\'in \cite{su5}, T\'in \cite{tin21}, Walker and Wood~\cite{wa3}, Wood~\cite{wo2}, the first named author \cite{su1,su2,su4} and others. 

The hit problem was explicitly computed by 
Peterson~\cite{pe} for $k=1, 2,$ by Kameko~\cite{ka} for $k=3$ and  by the first named author \cite{su2} for $k = 4$. This problem is open for any $k > 4$. Recently, the results on the hit problem and its applications to representations of general linear groups have been presented in the books of Walker and Wood \cite{wa1,wa2}.

For any nonnegative integer $m$, Denote by $(P_k)_m$ and  $(QP_k)_m$ the subspaces of degree $m$ homogeneous polynomials in the spaces $P_k$ and $QP_k$ respectively. Set $\mu(m) = \min\{u \in \mathbb Z : \alpha (m+u)\leqslant u\}$, where $\alpha (a)$ denotes the number of one in dyadic expansion of a positive integer $a$. 

By combining the results in Kameko \cite{ka} and Wood \cite{wo} the hit problem is reduced to the case of degree $m$ such that $\mu(m) < k$. For $\mu(m)=k-1$, the problem was studied by Crabb-Hubbuck~\cite{ch}, Nam~\cite{na}, Repka-Selick~\cite{res}, Walker-Wood \cite{wa3} and the first named author ~\cite{su1,su2}. For $\mu(m) = k-2$, it was studied in \cite{su4} which provides a new tool for studying the Peterson hit problem. 

In this paper, we compute the hit problem for the case $k=5$ and the degrees $m = 2^{d}$ with $d \geqslant 8$ in terms of the admissible monomials (see Section \ref{s2}). We prove the following. 

\begin{thm}\label{thm1}
For any integer $d \geqslant 8$, there exist exactly $1984$	
admissible monomials of degree $2^{d}$ in $P_5$. Consequently $\dim (QP_5)_{2^{d}} = 1984.$
\end{thm}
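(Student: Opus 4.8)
The plan is to deduce Theorem~\ref{thm1} from an explicit description of the admissible monomials of degree $2^d$ in $P_5$: once such a list is in hand, the count $1984$ is read off, and the real point is that this list — hence the count — is the same for every $d\geq 8$. The whole argument is a (substantial) extension of the methods of \cite{su1,su2,su4}, so below I describe how I would arrive at that description.

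The first move is to separate the contribution of monomials not using all the variables. Write $(QP_5)_m=(QP_5^0)_m\oplus(QP_5^+)_m$, where $QP_5^+$ is spanned by the classes of admissible monomials in which all five variables actually occur and $QP_5^0$ by those missing at least one variable. A monomial supported on a set of $j$ variables is admissible in $P_5$ exactly when the corresponding monomial is admissible in $P_j$, so (cf.\ Mothebe~\cite{mo}) $(QP_5^0)_{2^d}\cong\bigoplus_{j=1}^{4}\binom 5j\,(QP_j^+)_{2^d}$; since the hit problem is completely settled for $k\leq 4$ (Peterson~\cite{pe} for $k\leq 2$, Kameko~\cite{ka} for $k=3$, \cite{su2} for $k=4$), the right-hand side is an explicit constant for all $d\geq 8$ — note in particular that $(QP_1^+)_{2^d}=0$ since $x_1^{2^d}=Sq^{2^{d-1}}(x_1^{2^{d-1}})$ is hit. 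Everything therefore comes down to determining $(QP_5^+)_{2^d}$.

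For the $+$-part I would first observe that in degree $2^d$ no monomial of $P_5$ with full support is a spike (a spike with all five exponents positive, of the form $\prod x_i^{2^{s_i}-1}$ with every $s_i\geq 1$, would force $\sum_{i=1}^5 2^{s_i}=2^d+5$, which is impossible since the left side is even and the right side odd); so the criteria of Kameko and Singer for recognizing inadmissible and hit monomials apply in this degree without spike exceptions. I would then stratify the monomials of degree $2^d$ by their weight vector $\omega$ (the sequence of bitwise digit sums of the exponents) and, within each stratum, by the left-lexicographic order on exponents, and eliminate monomials stratum by stratum using the strictly inadmissible relations of \cite{su1,su2} together with the Cartan formula. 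Here the hypothesis $d\geq 8$ does the essential work: the binary expansion of $2^d$ has a long run of zeros, the configurations near the leading digit decouple from those near the bottom, and every $\omega$-stratum falls into one of finitely many shapes that is literally independent of $d$, so it suffices to carry out the reduction on a couple of base degrees (say $d=8,9$) and propagate it verbatim. What survives is a finite, $d$-independent spanning set for $(QP_5^+)_{2^d}$.

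The remaining step — and the hard one — is to prove that this spanning set is already a basis, i.e.\ that none of the surviving monomials is hit and that their classes satisfy no linear relation. I would attack non-hitness and linear independence together by means of the substitution homomorphisms $p_{(i;I)}\colon P_5\to P_4$ of \cite{su4} combined with the known admissible basis of $QP_4$: a hypothetical relation among the surviving classes would, after applying a suitable combination of the $p_{(i;I)}$ and projecting into $QP_4$, yield a nontrivial relation in $QP_4$, a contradiction; the $GL_5(\mathbb F_2)$-action on $(QP_5)_{2^d}$ is used throughout to reduce the number of cases. Given spanning and independence, $\dim(QP_5^+)_{2^d}$ is a fixed number for $d\geq 8$, and adding $\dim(QP_5^0)_{2^d}$ gives $\dim(QP_5)_{2^d}=1984$. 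I expect essentially all of the difficulty to lie in these last two stages: the $\omega$-stratum bookkeeping in the $+$-part runs to many hundreds of monomial shapes, and making the linear-independence argument both complete and uniform in $d$ is the genuine bottleneck.
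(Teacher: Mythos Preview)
Your overall strategy --- split off $QP_5^0$, stratify $QP_5^+$ by weight vector, kill monomials via strictly inadmissible relations, verify linear independence through the $p_{(i;I)}$ --- matches the paper's. But two concrete pieces are missing. First, you never pin down the weight vectors: the paper's opening reduction (Lemma~\ref{bdd5}) shows that for $d\geq 5$ an admissible monomial of degree $2^d$ in $P_5$ has weight vector exactly one of $(2)|(1)|^{d-1}$, $(4)|(2)|^{d-2}$, or $(4)|^2|(3)|^{d-4}|(1)$, using Singer's criterion (Theorem~\ref{dlsig}) together with Proposition~\ref{mdcb3} and a short list of low-degree inadmissibility facts. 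Only then does the problem break into three explicit pieces of dimensions $124$, $465$, $1395$. Treating the stratification as open-ended, as you do, leaves the bookkeeping unbounded and obscures where the work actually lies.

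Second, your mechanism for $d$-independence --- ``carry out the reduction on $d=8,9$ and propagate verbatim'' --- is not a proof. What makes the answer stabilize is Theorem~\ref{dlcb1}: if $w$ is strictly inadmissible then so is $wv^{2^s}$ for every $v$. For each of the three weight vectors the paper assembles a \emph{finite} list of strictly inadmissible $w$ in small fixed degrees (Lemmas~\ref{bda61}--\ref{bda75}); every non-surviving candidate in degree $2^d$ is shown to factor as $wz^{2^r}$ with $w$ on this list and $r$ bounded, so Theorem~\ref{dlcb1} eliminates it uniformly in $d$. The linear-independence step is likewise carried out parametrically, computing $p_{(i;I)}$ against a basis of $QP_4(\omega)$ whose exponents are expressions like $2^{d-2}-1$; there is no extrapolation from base cases. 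The threshold $d\geq 8$ enters only because for smaller $d$ these parametric exponents collide or go negative and sporadic extra admissibles appear (Section~\ref{sect6}).
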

For the cases $d = 1,\, 2,\, 3$, the computation is easy and we have $\dim (QP_5)_{2} = 10$, $\dim (QP_5)_{4} = 45$ and $\dim (QP_5)_{8} = 174$. The cases $d = 4,\, 5$ are respectively determined in our work \cite{su3} and Ph\'uc~\cite{ph21}, we showed that $\dim (QP_5)_{16} = 443$ and $\dim (QP_5)_{32} = 1004.$ We also study the problem for $d = 6,\, 7$ and we have $1679 \leqslant \dim (QP_5)_{64} \leqslant 1694$ and $1984 \leqslant \dim (QP_5)_{128} \leqslant 1990$ (see Section \ref{sect6}).  

\medskip
The paper is organized as follows. In Section \ref{s2}, we recall some needed information on the admissible monomials in $P_k$, the criteria of Singer and Silverman on the hit monomials. The detailed proof of Theorem \ref{thm1} is presented in Section ~ \ref{s3}. In Section \ref{sect5} we list the needed admissible monomials in $P_5$. Finally, in Section \ref{sect6} we present the computations of $(QP_5)_{2^d}$ for $5 \leqslant d \leqslant 7$.
 
%====================================
\section{Preliminaries}\label{s2}
\setcounter{equation}{0}

In this section, we recall some needed information on the weight vector of a monomial and the admissible monomials from Kameko~\cite{ka}, Singer \cite{si2} and the first named author \cite{su2} which will be used in the next section. We refer the reader to \cite{su2} and \cite{su4} for the details of notions and needed results. 

Let $x=x_1^{a_1}x_2^{a_2}\ldots x_k^{a_k} \in P_k$. We denote $\nu_j(x) = a_j, 1 \leqslant j \leqslant k$ and $\nu(x) = \max\{\nu_j(x):1 \leqslant j \leqslant k\}$. We define two sequences associated with $x$ by
\begin{align*} 
	\omega(x)&=(\omega_1(x),\omega_2(x),\ldots , \omega_i(x), \ldots),\ \
	\sigma(x) = (\nu_1(x),\nu_2(x),\ldots ,\nu_k(x)),
\end{align*}
where
$\omega_i(x) = \sum_{1\leqslant j \leqslant k} \alpha_{i-1}(\nu_j(x)),\ i \geqslant 1.$
The sequences $\omega(x)$ and $\sigma(x)$ are respectively called the weight vector and the exponent vector of $x$. 

The sets of weight vectors and exponent vectors respectively are given the left lexicographical order.   

\begin{defn}\label{dfn2} Let $\omega$ be a weight vector and $g,\, h$ two polynomials  of the same degree in $P_k$. 
	
i) $g \equiv h$ if and only if $g+h \in \mathcal A^+P_k$. If $g$ is hit, then $g \equiv 0$.
	
ii) $g \equiv_{\omega} h$ if and only if $g + h \in \mathcal A^+P_k+P_k^-(\omega)$. 
\end{defn}

It is easy to see that, the relations $\equiv$ and $\equiv_{\omega}$ are equivalence ones. Denote   
$$QP_k(\omega)= P_k(\omega)/ ((\mathcal A^+P_k\cap P_k(\omega))+P_k^-(\omega)).$$  

For a  polynomial $g \in  P_k$, denote by $[g]$ the class in $QP_k$ represented by $g$. If  $\omega$ is a weight vector and $g \in  P_k(\omega)$, then we denote $[g]_\omega$ the class in $QP_k(\omega)$ represented by $g$. Denote by $|D|$ the cardinal of a set $D$.

From \cite{su3}, for any degree $m$, we have
\begin{equation}\label{ct2.1}
(QP_k)_m \cong \bigoplus_{\deg \omega = m}QP_k(\omega).
\end{equation}

\begin{defn}\label{defn3} 
Let $u, v$ be monomials in $P_k$ with $\deg u = \deg v$. We define $u < v$ if and only if one of the following conditions holds:  
	
i) $\omega (u) < \omega(v)$;
	
ii) $\omega (u) = \omega(v)$ and $\sigma(u) < \sigma(v).$
\end{defn}

\begin{defn}
A monomial $u$ in $P_k$ is said to be inadmissible if there are monomials $v_1,v_2,\ldots, v_r$ such that $v_j<u$ for $j=1,2,\ldots , r$ and $u+ \sum_{j=1}^rv_j \in \mathcal A^+P_k.$ 
A monomial $u$ is said to be admissible if it is not inadmissible.
\end{defn} 

Obviously, the set of all the admissible monomials of degree $m$ in $P_k$ is a minimal set of $\mathcal{A}$-generators for $P_k$ in degree $m$.

Denote by $\mathcal A_s$ the sub-Hopf algebra of $\mathcal A$ generated by  $Sq^i$ with $0\leqslant i \leqslant 2^s$, and $\mathcal A_s^+ = \mathcal A^+\cap\mathcal A_s$. 
 
\begin{defn} 
A monomial $u$ in $P_k$ is said to be strictly inadmissible if and only if there exist monomials $v_1,v_2,\ldots, v_r$ such that $v_j<u,$ for $j=1,2,\ldots , r$ and $u + \sum_{j=1}^r v_j\in \mathcal A_{s-1}^+P_k$ with $s = \max\{i : \omega_i(x) > 0\}$.
\end{defn}

\begin{thm}[See Kameko \cite{ka}, Sum \cite{su1}]\label{dlcb1}  
Let $u, v, w$ be monomials in $P_k$ such that $\omega_i(u) = 0$ for $i > r>0$, $\omega_s(w) \ne 0$ and   $\omega_i(w) = 0$ for $i > s > 0$.
	
{\rm i)}  If  $w$ is inadmissible, then so is $uw^{2^r}$.
	
{\rm ii)}  If $w$ is strictly inadmissible, then so is $wv^{2^{s}}$.
\end{thm}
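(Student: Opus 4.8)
The plan is to reduce both parts to two kinds of input. The combinatorial input is that the monomial operations $g \mapsto u g^{2^r}$ and $g \mapsto g v^{2^s}$ are strictly order-preserving, for the order of Definition~\ref{defn3}, on the monomials of a fixed degree. Indeed, a monomial $u$ with $\omega_i(u) = 0$ for $i > r$ has all exponents less than $2^r$, so $\nu_j(u g^{2^r}) = \nu_j(u) + 2^r \nu_j(g)$ involves no carry; hence $\omega_i(u g^{2^r}) = \omega_i(u)$ for $i \leqslant r$ and $\omega_i(u g^{2^r}) = \omega_{i-r}(g)$ for $i > r$, while $\sigma(u g^{2^r})$ is the image of $\sigma(g)$ under the coordinatewise, strictly increasing map $b_j \mapsto \nu_j(u) + 2^r b_j$. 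Thus $g < g'$ forces $u g^{2^r} < u g'^{2^r}$. Symmetrically, if $w$ has $\omega_i(w) = 0$ for $i > s$, then for any monomial $g < w$ of the same degree the comparison of $g v^{2^s}$ with $w v^{2^s}$ is already decided in the first $s$ entries of the weight vector (if $\omega(g) < \omega(w)$, the first discrepancy is at some index $\leqslant s$, where the truncated weight vectors are unchanged by multiplication by $v^{2^s}$) or reduces to comparing exponent vectors after the shift by $2^s$ (if $\omega(g) = \omega(w)$, in which case $g$ too has all exponents less than $2^s$); either way $g v^{2^s} < w v^{2^s}$. The operational input is the identity $Sq^{2^e t}(h^{2^e}) = (Sq^t h)^{2^e}$ together with $Sq^b(h^{2^e}) = 0$ whenever $2^e \nmid b$, both obtained by iterating the Cartan formula.

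I would treat (ii) first, as it is direct. Let $w$ be strictly inadmissible with $s = \max\{i : \omega_i(w) > 0\}$, so there are monomials $v_1, \dots, v_N < w$ and a polynomial $\theta \in \mathcal{A}_{s-1}^+ P_k$ with $w + \sum_t v_t = \theta$; write $\theta = \sum_j Sq^{n_j}(\ell_j)$ with $1 \leqslant n_j \leqslant 2^{s-1}$. Multiplying by $v^{2^s}$ and using the Cartan formula, every term $Sq^a(\ell_j) Sq^b(v^{2^s})$ with $b > 0$ vanishes, because then $0 < b \leqslant n_j \leqslant 2^{s-1} < 2^s$ and $2^s \nmid b$; so $Sq^{n_j}(\ell_j) v^{2^s} = Sq^{n_j}(\ell_j v^{2^s})$ and
\[
w v^{2^s} + \sum_t v_t v^{2^s} = \sum_j Sq^{n_j}(\ell_j v^{2^s}) \in \mathcal{A}_{s-1}^+ P_k .
\]
Since $\omega_s(w v^{2^s}) = \omega_s(w) > 0$, the weight vector of $w v^{2^s}$ has length $s' \geqslant s$, whence $\mathcal{A}_{s-1}^+ P_k \subseteq \mathcal{A}_{s'-1}^+ P_k$; combined with $v_t v^{2^s} < w v^{2^s}$ from the order-preservation above, this exhibits $w v^{2^s}$ as strictly inadmissible.

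For (i), let $w$ be inadmissible, so there are monomials $v_1, \dots, v_N < w$ with $f := w + \sum_t v_t \in \mathcal{A}^+ P_k$. Over $\mathbb{F}_2$, $f^{2^r} = w^{2^r} + \sum_t v_t^{2^r}$, hence $u w^{2^r} = u f^{2^r} + \sum_t u v_t^{2^r}$ with every $u v_t^{2^r} < u w^{2^r}$; it therefore suffices to prove the key lemma: \emph{if $f$ is hit and $u$ is a monomial with all exponents less than $2^r$, then $u f^{2^r}$ is hit.} Using $\mathcal{A}^+ P_k = \sum_{i \geqslant 0} Sq^{2^i}(P_k)$ one reduces to $f = Sq^{2^i}(\ell)$; then $u f^{2^r} = u \, Sq^{2^{i+r}}(\ell^{2^r})$, and applying the Cartan formula to $Sq^{2^{i+r}}(u \ell^{2^r})$ and discarding the terms with $Sq^b(\ell^{2^r})$, $2^r \nmid b$, gives
\[
u f^{2^r} = Sq^{2^{i+r}}(u \ell^{2^r}) + \sum_{q=0}^{2^i - 1} Sq^{2^r(2^i - q)}(u) \, (Sq^q \ell)^{2^r}.
\]
The first summand is hit. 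For the remaining ones I would expand each $Sq^{2^r e}(u)$ with $e \geqslant 1$, note that every monomial appearing in it still has all exponents less than $2^{r+1}$ and split it as its squarefree part times a $2^r$-th power, and then close the argument by a well-founded induction — on the weight vector (equivalently, on the length of $\omega$), with a subsidiary induction on the degree — that reuses the key lemma only in a strictly smaller range. Making this self-reference rigorous is the main obstacle; it is precisely the bookkeeping carried out by Kameko~\cite{ka} and in \cite{su1}. Granting the lemma, $u f^{2^r} \equiv 0$, so $u w^{2^r} \equiv \sum_t u v_t^{2^r}$ with each summand below $u w^{2^r}$, and $u w^{2^r}$ is inadmissible.
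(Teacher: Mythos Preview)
The paper does not give its own proof of this theorem; it is quoted from Kameko~\cite{ka} and Sum~\cite{su1}. So there is no in-paper argument to compare against, and I will assess your proof on its own merits.

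Your treatment of part~(ii) is correct: the order-preservation argument is clean, the vanishing $Sq^b(h^{2^s})=0$ for $0<b<2^s$ kills the unwanted Cartan terms, and the containment $\mathcal A_{s-1}^+\subset\mathcal A_{s'-1}^+$ finishes it.

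Part~(i), however, has a genuine error: your ``key lemma'' is \emph{false}. Take $k=4$, $r=1$, $u=x_1x_2x_3$ (all exponents $<2$) and $f=Sq^1(x_4)=x_4^2$, which is hit. Then $uf^{2}=x_1x_2x_3x_4^4$. This monomial is \emph{not} hit: a short computation of $(QP_4^+)_7$ (there are $20$ monomials, and the $Sq^1,Sq^2,Sq^3$ relations have rank $11$) shows that the common class of the four monomials $x_i\prod_{j\ne i}x_j^2$ is nonzero, and one checks $x_1x_2x_3x_4^4\equiv x_1x_2^2x_3^2x_4^2$ modulo $\mathcal A^+P_4$. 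In particular, when $w$ is itself hit (so $f=w$ and there are no $v_t$), your reduction would force $uw^{2^r}$ to be hit, which it need not be.

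What \emph{is} true in this example---and what the theorem actually asserts---is that $x_1x_2x_3x_4^4$ is inadmissible: it is congruent to the strictly smaller monomial $x_1x_2^2x_3^2x_4^2$, whose weight $(1,3)$ is below $\omega(uw^{2})=(3,0,1)$. The proofs in \cite{ka,su1} work at the level of the weight filtration: one shows that the map $[g]_{\omega}\mapsto[ug^{2^r}]_{\omega(u)\,|\,\omega}$ is well-defined on $QP_k(\omega)$, i.e.\ that $uf^{2^r}\in\mathcal A^+P_k+P_k^-\big(\omega(u)\,|\,\omega(w)\big)$ whenever $f\in\mathcal A^+P_k\cap P_k(\omega(w))$. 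This is strictly weaker than ``$uf^{2^r}$ is hit'' but suffices, since monomials of smaller weight are smaller in the order of Definition~\ref{defn3}. Your Cartan expansion is a reasonable first step toward this, but the induction has to be organised to control weights, not to produce a hit polynomial outright.
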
 

\begin{prop}[See {\cite[Proposition 4.3]{su2}}]\label{mdcb3} Let $u$ be an admissible monomial in $P_k$ and let $t_0$ be a positive integer. Then we have
	
{\rm i)} If $\omega_{t_0}(u)=0$, then $\omega_{t}(u)=0$ for all $t > t_0$.
	
{\rm ii)} If $\omega_{t_0}(u)<k$, then $\omega_{t}(u)<k$ for all $t > t_0$.
\end{prop}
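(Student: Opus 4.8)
The plan is to prove both parts by contradiction, in each case exhibiting the admissible monomial $u$ as a product of a ``low'' and a ``high'' factor and then invoking the splitting Theorem~\ref{dlcb1}. The common device I would set up first is the following elementary decomposition: for an integer $r\geqslant 1$, writing each exponent as $a_j=b_j+2^rc_j$ with $0\leqslant b_j<2^r$ gives $u=u'w^{2^r}$, where $u'=x_1^{b_1}\cdots x_k^{b_k}$ and $w=x_1^{c_1}\cdots x_k^{c_k}$. Comparing binary digits one checks at once that $\omega_i(u')=\omega_i(u)$ for $i\leqslant r$, that $\omega_i(u')=0$ for $i>r$, and that $\omega_i(w)=\omega_{i+r}(u)$ for all $i\geqslant 1$. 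Thus the tail of $\omega(u)$ beyond position $r$ is exactly the weight vector of the high factor $w$, which is precisely the data Theorem~\ref{dlcb1} consumes. I would also record the standard instability identity $Sq^{\deg z}(z)=z^2$, so that every nontrivial square $z^2=Sq^{\deg z}(z)$ is hit, hence not admissible.

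For part (i), I would suppose $u$ is admissible with $\omega_{t_0}(u)=0$ yet $\omega_t(u)\neq 0$ for some $t>t_0$. If $t_0=1$ then every $a_j$ is even, so $u$ is a nontrivial square and therefore hit, contradicting admissibility. If $t_0\geqslant 2$, I put $r=t_0-1$ and decompose $u=u'w^{2^r}$ as above; then $\omega_1(w)=\omega_{t_0}(u)=0$, so $w$ is a perfect square, and it is nontrivial because it carries the nonzero weight $\omega_{t-r}(w)=\omega_t(u)$. Hence $w$ is hit, so inadmissible, and since $\omega_i(u')=0$ for $i>r>0$, Theorem~\ref{dlcb1}(i) forces $u=u'w^{2^r}$ to be inadmissible, a contradiction.

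For part (ii) I would again argue by contradiction: let $u$ be admissible with $\omega_{t_0}(u)<k$ but $\omega_t(u)=k$ for some $t>t_0$, and choose $t>t_0$ minimal with $\omega_t(u)=k$, so that $\omega_{t-1}(u)<k$ and $t\geqslant 2$. Setting $r=t-2$ and decomposing $u=u'w^{2^r}$ (with $u'=1$ when $r=0$) reduces everything, via Theorem~\ref{dlcb1}(i), to the base claim that \emph{any monomial $W$ with $\omega_1(W)<k$ and $\omega_2(W)=k$ is inadmissible}; indeed $\omega_1(w)=\omega_{t-1}(u)<k$ and $\omega_2(w)=\omega_t(u)=k$. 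To prove the claim I would split off the higher bits: the hypothesis $\omega_2(W)=k$ forces the digit $2^1$ of every exponent to be $1$, so each exponent of $W$ is $e_j+4f_j$ with $e_j\in\{2,3\}$, giving $W=W_0V^{4}$ with $W_0=x_1^{e_1}\cdots x_k^{e_k}$. Here $\omega_i(W_0)=0$ for $i>2$, $\omega_2(W_0)=k$, and $\omega_1(W_0)=\omega_1(W)<k$, so some $e_{j_0}=2$. Applying the derivation $Sq^1$ to the monomial $z$ obtained from $W_0$ by lowering the exponent of $x_{j_0}$ by one yields $W_0=Sq^1(z)+\sum_{e_j\text{ odd},\,j\neq j_0}z\,x_j$, where each summand keeps the first weight coordinate $\omega_1=\omega_1(W_0)$ but drops the second from $k$ to $k-2$; by Definition~\ref{defn3} every summand is strictly smaller than $W_0$, and the reduction lies in $\mathcal A_1^+P_k$, so $W_0$ is strictly inadmissible with top index $s=2$. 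Theorem~\ref{dlcb1}(ii) then gives that $W=W_0V^{4}$ is inadmissible, proving the claim and completing the contradiction.

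The straightforward half is (i), which collapses to the single fact that a nontrivial square is hit. The real work lies in (ii): the main obstacle will be establishing the base claim, that is, the strict inadmissibility of the ``$\{2,3\}$-configuration'' monomial $W_0$, and in particular the bookkeeping verifying that each monomial produced by the $Sq^1$-reduction is genuinely smaller in the order of Definition~\ref{defn3} (the first weight coordinate must be preserved while the second strictly decreases). Isolating this base case by peeling off the higher bits as $V^4$ is what keeps the weight-vector computation clean and lets the two halves of Theorem~\ref{dlcb1} carry out the rest.
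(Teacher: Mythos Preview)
The paper does not supply its own proof of this proposition; it is quoted from \cite[Proposition~4.3]{su2}, so there is no in-paper argument to compare against. Your proof is correct and self-contained. The factorization $u=u'w^{2^r}$ together with the identification $\omega_i(w)=\omega_{i+r}(u)$ is exactly the right device, and the two invocations of Theorem~\ref{dlcb1} are applied with the correct hypotheses (in particular, the $r=0$ boundary case in part~(ii) is handled separately, as it must be since Theorem~\ref{dlcb1}(i) requires $r>0$). The $Sq^1$-reduction showing the ``$\{2,3\}$-configuration'' monomial $W_0$ is strictly inadmissible is the expected argument: your computation that each error term $x_jz$ has $\omega_1=\omega_1(W_0)$ and $\omega_2=k-2$ is correct, so these terms are indeed smaller in the order of Definition~\ref{defn3}, and the reduction lies in $\mathcal A_1^+P_k$ as required for $s=2$. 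This is in fact essentially the argument given in \cite{su2}; you have reconstructed it accurately.
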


Now, we recall a result of Singer \cite{si2} on the hit monomials in $P_k$. 

\begin{defn}\label{spi}  A monomial $z$ in $P_k$ is called a spike if $\nu_j(z)=2^{d_j}-1$ for $d_j$ a non-negative integer and $j=1,2, \ldots , k$. If $d_1>d_2>\ldots >d_{r-1}\geqslant d_r>0$ and $d_j=0$ for $j>r,$ then $z$ is called the minimal spike.
\end{defn}

In \cite{si2}, Singer showed that if $\mu(m) \leqslant k$, then there exists uniquely a minimal spike of degree $m$ in $P_k$. 

\begin{thm}[See Singer~\cite{si2}]\label{dlsig} Suppose $u \in P_k$ is a monomial of degree $m$, where $\mu(m) \leqslant k$. Let $z$ be the minimal spike of degree $m$. If $\omega(u) < \omega(z)$, then $u \equiv 0$.
\end{thm}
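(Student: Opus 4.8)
The plan is to prove the statement by well-founded induction along the total order $<$ of Definition \ref{defn3}, which is a total order on the finite set of degree-$m$ monomials. I would show that every monomial $u$ with $\omega(u) < \omega(z)$ admits a congruence $u \equiv \sum_i v_i$ with $v_i < u$ for all $i$. By Definition \ref{defn3} the relation $v_i < u$ forces $\omega(v_i) \leqslant \omega(u) < \omega(z)$, so each $v_i$ again satisfies the hypothesis and the induction is self-feeding; the $<$-minimal monomials with $\omega < \omega(z)$ serve as the base case. These minimal monomials concentrate all their dyadic weight in the highest columns, i.e. they are large pure powers, and can be exhibited directly as Steenrod images via the elementary identity $x_j^{2b} = Sq^b(x_j^b)$; well-foundedness then yields $u \equiv 0$.

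First I would fix the explicit shape of $\omega(z)$. Writing each exponent in binary and viewing $u$ as a $0$-$1$ matrix whose $(j,i)$ entry is the $i$-th dyadic digit of $\nu_j(u)$, the number $\omega_i(u)$ is exactly the $(i-1)$-st column sum and $\deg u = \sum_i 2^{i-1}\omega_i(u)$. For the minimal spike $z$ with parts $2^{d_j}-1$, where $d_1 > \cdots > d_{r-1} \geqslant d_r > 0$, the rows are solid initial blocks of ones, so $\omega_i(z) = \#\{j : d_j \geqslant i\}$ is non-increasing with $\omega_1(z) = r = \mu(m)$. Let $i_0$ be the least index with $\omega_{i_0}(u) \neq \omega_{i_0}(z)$; the order $<$ gives $\omega_{i_0}(u) < \omega_{i_0}(z)$ while earlier columns agree. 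Since $u$ and $z$ share the degree $m$ and $u$ carries strictly less weight in column $i_0-1$ than the maximally left-packed spike, $u$ must carry strictly more weight in some higher column; this \emph{excess in the high columns} is the resource that drives the reduction.

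The engine is the congruence $x_j^{2b} = Sq^b(x_j^b)$ combined with the Cartan formula: for a factored monomial $x_j^{2b}w$ one has $x_j^{2b}w \equiv \sum_{i<b} Sq^i(x_j^b)\,Sq^{b-i}(w)$, since $Sq^b$ of the product is hit. Using the excess weight located above, I would choose a variable $x_j$ and an index at which an even digit-block can be \emph{folded down}, apply the appropriate $Sq^{2^s}$ (or a short composite governed by the Adem relations), and expand. Theorem \ref{dlcb1} permits carrying out the reduction on the top part of the weight vector and splicing it back, exactly as in the strict-inadmissibility mechanism. Each resulting term $v_i$ is obtained by moving a one from a high column to a lower one, so $\omega(v_i) < \omega(u)$, or $\omega(v_i) = \omega(u)$ with a strictly smaller exponent vector; in either case $v_i < u$.

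The hard part will be the bookkeeping guaranteeing that all correction terms from the Cartan expansion are genuinely $<$-smaller than $u$: carries between adjacent dyadic columns and the interaction of several variables can a priori produce terms of equal or larger weight, and excluding these is where the left-packed structure of $z$ and the hypothesis $\mu(m)\leqslant k$ enter decisively, the latter ensuring the existence and uniqueness of $z$ so that the deficit at column $i_0$ cannot be absorbed without real excess higher up. A clean alternative that sidesteps part of this combinatorics is the dual formulation: $u$ is hit if and only if it pairs to zero with every $\mathcal{A}$-annihilated class in $(H_*BV_k)_m$; by the weight filtration and the decomposition \eqref{ct2.1} it suffices to show $QP_k(\omega)=0$ for each $\omega < \omega(z)$, and since on each factor $H_*(B\mathbb{Z}/2)$ the annihilated classes are precisely the $a_{2^s-1}$, a comultiplication-and-weight analysis of how such classes assemble in the $k$-fold tensor product forces any annihilated class of total degree $m$ to have weight at least $\omega(z)$. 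Either route reflects the same point: the minimal spike realizes the least weight vector compatible with survival, and everything strictly below it is hit.
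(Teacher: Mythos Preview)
The paper does not prove Theorem \ref{dlsig}; it is quoted from Singer \cite{si2} as a background result and used as a black box (for instance in the proof of Lemma \ref{bdd5}). So there is no in-paper proof to compare against, and your task is really to reconstruct Singer's argument.

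On the proposal itself: the primal inductive scheme has a genuine gap precisely where you flag ``the hard part''. The identity $x_j^{2b}w + \sum_{i<b} Sq^i(x_j^b)\,Sq^{b-i}(w) = Sq^b(x_j^b w)$ is correct, but the summands $Sq^i(x_j^b)\,Sq^{b-i}(w)$ are \emph{not} in general $<$-smaller than $x_j^{2b}w$. Expanding $Sq^{b-i}(w)$ by Cartan produces monomials whose low-order weight entries can exceed those of $w$ (a single $Sq^1$ on a factor $x_\ell^{2c}$ already raises $\omega_1$), so the induction does not close. This is not bookkeeping that can be waved away, and invoking Theorem \ref{dlcb1} does not help: that theorem presupposes a strictly inadmissible factor, which is what you are trying to produce. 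As written, the reduction step simply fails.

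Your dual formulation is the one that actually carries the argument and is close to Singer's original approach. The point is that a class in $(P_k)_m$ survives to $QP_k$ only if it pairs nontrivially with an $\mathcal A$-annihilated element of $(P_k^*)_m$, and the $\mathcal A$-annihilated elements of $H_*(B\mathbb Z/2)\cong \mathbb F_2[a]$ are exactly the $a_{2^s-1}$; the weight-filtration argument then shows that any $\mathcal A$-annihilated class in the $k$-fold tensor product of degree $m$ has weight vector $\geqslant \omega(z)$, since the minimal spike is by construction the lex-least way to write $m$ as a sum of at most $k$ terms of the form $2^s-1$. If you want to complete the proof, drop the Cartan-induction paragraph entirely and make this dual statement precise; see \cite{si2} or the treatment in Walker--Wood \cite{wa1}.
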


We set 
\begin{align*} 
P_k^0 &=\langle\{x\in P_k \ : \ \prod_{j=1}^k\nu_j(x)=0\}\rangle,\ \ 
P_k^+ = \langle\{x\in P_k \ : \ \prod_{j=1}^k\nu_j(x)>0\}\rangle. 
\end{align*}

It is easy to see that $P_k^0$ and $P_k^+$ are the $\mathcal{A}$-submodules of $P_k$ and 
$QP_k =QP_k^0 \oplus  QP_k^+.$
Here $QP_k^0 = P_k^0/\mathcal A^+P_k^0$ and  $QP_k^+ = P_k^+/\mathcal A^+P_k^+$.

We set $$\mathcal N_k =\{(i;I) : I=(i_1,i_2,\ldots,i_r),1 \leqslant  i < i_1 <  \ldots < i_r\leqslant  k,\ 0\leqslant r <k\}.$$ 
For each $(i;I) \in \mathcal N_k$, we define the homomorphism of algebras $p_{(i;I)}: P_k \to P_{k-1}$ by 
\begin{equation}\label{ct23}
p_{(i;I)}(x_j) =\begin{cases} x_j, &\text{ if } 1 \leqslant j < i,\\
\sum_{s\in I}x_{s-1}, &\text{ if }  j = i,\\  
x_{j-1},&\text{ if } i< j \leqslant k.
\end{cases}
\end{equation}
Then $p_{(i;I)}$ is a homomorphism of $\mathcal A$-algebras. 
\begin{lem}[Ph\'uc-Sum \cite{sp}]\label{bdm} For any monomial $u$ in $P_k$, we have $$p_{(i;I)}(u) \in P_{k-1}(\omega(u)).$$ 
\end{lem}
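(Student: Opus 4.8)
The plan is to prove the (formally stronger) statement that \emph{every} monomial occurring in $p_{(i;I)}(u)$ has weight vector $\leqslant\omega(u)$ in the left lexicographical order. Since $p_{(i;I)}$ is a graded algebra homomorphism, $p_{(i;I)}(u)$ is homogeneous of degree $\deg u$, and as $P_{k-1}(\omega(u))$ is the subspace spanned by all monomials of $P_{k-1}$ of degree $\deg u$ whose weight vector does not exceed $\omega(u)$, this suffices. Write $u=x_1^{a_1}x_2^{a_2}\cdots x_k^{a_k}$ and let $a_i=\sum_{t\in T}2^{e_t}$ be the binary expansion of this exponent (the $e_t$ being pairwise distinct). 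If $a_i>0$ and $r=0$, then $p_{(i;I)}(x_i)=0$, so $p_{(i;I)}(u)=0$ and there is nothing to prove; otherwise, using the Frobenius identity in $\mathbb F_2[x_1,\ldots,x_{k-1}]$,
\begin{align*}
\Big(\sum_{s\in I}x_{s-1}\Big)^{a_i}=\prod_{t\in T}\,\sum_{s\in I}x_{s-1}^{2^{e_t}}
\end{align*}
(the product being $1$ if $a_i=0$). Multiplying this out, the monomials of $p_{(i;I)}(u)$ are indexed by the maps $\varphi\colon T\to I$: for such a $\varphi$, put $b_s=\sum_{t\in\varphi^{-1}(s)}2^{e_t}$ for $s\in I$ and $b_s=0$ otherwise; then the corresponding monomial $w_\varphi$ satisfies $\nu_j(w_\varphi)=a_j$ for $1\leqslant j<i$ and $\nu_j(w_\varphi)=a_{j+1}+b_{j+1}$ for $i\leqslant j\leqslant k-1$. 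The key observation is that the numbers $b_s$, $s\in I$, have pairwise disjoint binary expansions and sum to $a_i$, so that $\sum_{s\in I}\alpha_\ell(b_s)=\alpha_\ell(a_i)$ for every $\ell\geqslant 0$.

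Next I would compare the two weight vectors coordinate by coordinate. From the description of $\nu_j(w_\varphi)$ together with this digit-sum identity, one obtains for each $\ell\geqslant 1$
\begin{align*}
\omega_\ell(u)&=\sum_{1\leqslant j<i}\alpha_{\ell-1}(a_j)+\sum_{i\leqslant j\leqslant k-1}\big(\alpha_{\ell-1}(a_{j+1})+\alpha_{\ell-1}(b_{j+1})\big),\\
\omega_\ell(w_\varphi)&=\sum_{1\leqslant j<i}\alpha_{\ell-1}(a_j)+\sum_{i\leqslant j\leqslant k-1}\alpha_{\ell-1}(a_{j+1}+b_{j+1}).
\end{align*}
For each $j$ with $i\leqslant j\leqslant k-1$, let $\gamma^{(j)}_\ell$ be the carry into the $2^\ell$-place when $a_{j+1}$ and $b_{j+1}$ are added in binary, so that $\gamma^{(j)}_0=0$, $\gamma^{(j)}_\ell\geqslant 0$, and $\alpha_\ell(a_{j+1}+b_{j+1})=\alpha_\ell(a_{j+1})+\alpha_\ell(b_{j+1})+\gamma^{(j)}_\ell-2\gamma^{(j)}_{\ell+1}$. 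Summing over $j$ and setting $\Gamma_\ell=\sum_{i\leqslant j\leqslant k-1}\gamma^{(j)}_\ell$ (so $\Gamma_0=0$ and $\Gamma_\ell\geqslant 0$ for all $\ell$), the two displays combine to the single identity
\begin{align*}
\omega_\ell(w_\varphi)-\omega_\ell(u)=\Gamma_{\ell-1}-2\Gamma_\ell\qquad(\ell\geqslant 1).
\end{align*}

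Finally I would read off the order relation. If $\omega(w_\varphi)=\omega(u)$ there is nothing to do; otherwise let $\ell_0\geqslant 1$ be least with $\omega_{\ell_0}(w_\varphi)\neq\omega_{\ell_0}(u)$. For $1\leqslant\ell<\ell_0$ we have $\Gamma_{\ell-1}-2\Gamma_\ell=0$, and since $\Gamma_0=0$, induction gives $\Gamma_\ell=0$ for all $\ell<\ell_0$; in particular $\Gamma_{\ell_0-1}=0$, so $\omega_{\ell_0}(w_\varphi)-\omega_{\ell_0}(u)=-2\Gamma_{\ell_0}\leqslant 0$, and this is nonzero by the choice of $\ell_0$. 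Hence $\omega_{\ell_0}(w_\varphi)<\omega_{\ell_0}(u)$, i.e.\ $\omega(w_\varphi)<\omega(u)$ in the left lexicographical order. Thus every monomial of $p_{(i;I)}(u)$ belongs to $P_{k-1}(\omega(u))$, and therefore so does $p_{(i;I)}(u)$. The only point needing real care is the carry bookkeeping that yields $\omega_\ell(w_\varphi)-\omega_\ell(u)=\Gamma_{\ell-1}-2\Gamma_\ell$; everything else is routine, and I anticipate no conceptual obstacle beyond tracking the reindexing $x_j\mapsto x_{j-1}$ correctly.
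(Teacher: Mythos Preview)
The paper does not prove this lemma; it is cited from Ph\'uc--Sum \cite{sp} without argument. Your proof is correct and is essentially the standard one: the identity $\omega_\ell(w_\varphi)-\omega_\ell(u)=\Gamma_{\ell-1}-2\Gamma_\ell$ with $\Gamma_0=0$ and $\Gamma_\ell\geqslant 0$ is exactly the mechanism that forces $\omega(w_\varphi)\leqslant\omega(u)$ lexicographically, and your carry bookkeeping is accurate. One small observation you could add for completeness (though it is not needed for the argument): since the $b_s$ partition the binary digits of $a_i$, the monomials $w_\varphi$ are pairwise distinct, so there is in fact no cancellation over $\mathbb F_2$; but even if there were, your termwise bound would still yield $p_{(i;I)}(u)\in P_{k-1}(\omega(u))$.
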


It is easy to see that if $\omega$ is a weight vector of degree $m$ and $u \in P_k(\omega)$, then $p_{(i;I)}(u) \in P_{k-1}(\omega)$. Moreover, $p_{(i;I)}$ passes to a homomorphism  
\begin{align*}
&p_{(i;I)}^{(\omega)} :QP_k(\omega)\longrightarrow QP_{k-1}(\omega),\ \
p_{(i;I)} :(QP_k)_m\longrightarrow (QP_{k-1})_m.
\end{align*} 
We set 
$$\widetilde {SF}_k(\omega) = \bigcap_{(i;I)\in \mathcal N_k}  \mbox{Ker}(p_{(i;I)}^{(\omega)}) \mbox{ and } \widetilde{QP}_k(\omega) = QP_k(\omega)/\widetilde {SF}_k(\omega).$$

We see that $\widetilde {SF}_k(\omega)$ is a subspace of the spike-free module $SF^{\omega}_k$ (see Walker and Wood \cite[Chap. 30]{wa2}). Then we have
$$QP_k(\omega) \cong \widetilde{QP}_k(\omega) \bigoplus \widetilde {SF}_k(\omega).$$

For $J= (j_1, j_2, \ldots, j_s) : 1 \leqslant j_1 <\ldots < j_s \leqslant k$, we define a monomorphism of $\mathcal A$-algebras $\theta_J: P_s \to P_k$ by substituting 
\begin{equation}\label{ctbs}
\theta_J(x_t) = x_{j_t} \ \mbox{ for } \ 1 \leqslant t \leqslant s.
\end{equation} 
Obviously, for any weight vector $\omega$ of degree $m$, 
\[Q\theta_J(P_s^+)(\omega) \cong  QP_s^+(\omega)\mbox{ and } (Q\theta_J(P_s^+))_m \cong (QP_s^+)_m\] 
for $1 \leqslant s \leqslant k$, where $Q\theta_J(P_s^+) = \theta_J(P_s^+)/\mathcal A^+\theta_J(P_s^+)$. Hence, by a simple computation using the result in Wood \cite{wo} and (\ref{ct2.1}), we get the following.
\begin{prop}[{See Walker and Wood~\cite{wa1}}]\label{mdbs} Let $\omega$ be a weight vector of degree $m$. Then, we have 
\begin{align*}
\dim QP_k(\omega) &= \sum_{\mu(m) \leqslant s\leqslant k}{k\choose s}\dim QP_s^+(\omega),\\
\dim (QP_k)_m &= \sum_{\mu(m) \leqslant s\leqslant k}{k\choose s}\dim (QP_s^+)_m.
\end{align*}
\end{prop}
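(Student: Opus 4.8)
The plan is to build an $\mathcal A$-module direct sum decomposition of $P_k$ indexed by the support of a monomial, apply the functor $Q=\mathbb F_2\otimes_{\mathcal A}(-)$, and then truncate the resulting sum by Wood's vanishing theorem. For a monomial $x=x_1^{a_1}\cdots x_k^{a_k}$ set $\mathrm{supp}(x)=\{j:a_j>0\}$, and for each $J\subseteq\{1,\ldots,k\}$ let $P_k^{(J)}$ be the span of all monomials with support exactly $J$. Every monomial has a unique support, so as graded $\mathbb F_2$-vector spaces $P_k=\bigoplus_{J}P_k^{(J)}$. First I would check that each $P_k^{(J)}$ is an $\mathcal A$-submodule: by the Cartan formula $Sq^n(x_1^{a_1}\cdots x_k^{a_k})$ is a sum of terms $\prod_j\binom{a_j}{i_j}x_j^{a_j+i_j}$ with $\sum_j i_j=n$, and since $\binom{0}{i_j}=0$ unless $i_j=0$, every index with $a_j=0$ keeps exponent $0$ while every index with $a_j>0$ keeps a positive exponent $a_j+i_j$. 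Hence each monomial occurring in $Sq^n(x)$ has the same support as $x$, so the $\mathcal A$-action preserves $P_k^{(J)}$ and the decomposition is one of $\mathcal A$-modules. Moreover $P_k^{(J)}=\theta_J(P_s^+)$ for $s=|J|$, where $\theta_J$ is the $\mathcal A$-algebra monomorphism of \eqref{ctbs}.

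Applying $Q$, which preserves direct sums (or simply working degreewise, where all spaces are finite-dimensional), and restricting to degree $m$ gives $(QP_k)_m=\bigoplus_J (Q\theta_J(P_{|J|}^+))_m$. By the isomorphisms stated just before the proposition, $(Q\theta_J(P_s^+))_m\cong (QP_s^+)_m$, and there are exactly $\binom{k}{s}$ subsets $J$ of cardinality $s$; grouping the summands by $s=|J|$ yields $\dim(QP_k)_m=\sum_{s=0}^{k}\binom{k}{s}\dim(QP_s^+)_m$. The same argument carried out weight-vector-by-weight-vector, using that $\theta_J$ preserves the weight vector (it merely relabels variables, so the contributions $\alpha_{i-1}(0)=0$ of the absent variables do not change $\omega$) together with \eqref{ct2.1}, gives $\dim QP_k(\omega)=\sum_{s=0}^{k}\binom{k}{s}\dim QP_s^+(\omega)$.

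It remains to replace the lower limit $s=0$ by $s=\mu(m)$, which is the only place where genuine input is required. Since $QP_s=QP_s^0\oplus QP_s^+$, the space $(QP_s^+)_m$ is a direct summand of $(QP_s)_m$; by Wood's theorem \cite{wo}, $(QP_s)_m=0$ whenever $\mu(m)>s$, so every term with $s<\mu(m)$ vanishes and the sum collapses to $\mu(m)\leqslant s\leqslant k$. The weight-vector version follows identically, since each $QP_s^+(\omega)$ is a subquotient of $(QP_s)_m$ via \eqref{ct2.1} and hence also vanishes for $s<\mu(m)$. The main thing to get right is the $\mathcal A$-linearity of the support decomposition, which makes the splitting survive the passage to $Q$; once that is in place the two identities are formal, and the only remaining ingredient is the correct form of Wood's vanishing result to justify the truncation.
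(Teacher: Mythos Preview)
Your proof is correct and follows essentially the same approach that the paper sketches. The paper does not spell out a detailed argument but simply points to the isomorphisms $Q\theta_J(P_s^+)(\omega)\cong QP_s^+(\omega)$ and $(Q\theta_J(P_s^+))_m\cong(QP_s^+)_m$ recorded just before the proposition, together with Wood's theorem and the weight-vector decomposition \eqref{ct2.1}; your support decomposition $P_k=\bigoplus_J\theta_J(P_{|J|}^+)$ as $\mathcal A$-modules is exactly what makes those isomorphisms assemble into the stated formula, and your invocation of Wood's result for the truncation is the intended one.
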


Set $J_t = (1,\ldots,\hat t,\ldots,k)$ for $1 \leqslant t \leqslant k$.

\begin{prop}[See Mothebe and Uys \cite{mo}]\label{mdmo} Let $t, d$ be positive integers such that $1 \leqslant t \leqslant k$. If $u$ is an admissible monomial in $P_{k-1}$ then so is $x_t^{2^d-1}\theta_{J_t}(u)$  in $P_{k}$.
\end{prop}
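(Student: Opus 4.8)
The plan is to argue by contraposition: assuming that $w:=x_t^{2^d-1}\theta_{J_t}(u)$ is inadmissible in $P_k$, I will produce a relation exhibiting $u$ as inadmissible in $P_{k-1}$. Throughout, write $\phi\colon P_{k-1}\to P_k$ for the injective $\mathbb F_2$-linear map $\phi(v)=x_t^{2^d-1}\theta_{J_t}(v)$, which carries the monomial basis of $P_{k-1}$ into the monomial basis of $P_k$: if $v=x_1^{a_1}\cdots x_{k-1}^{a_{k-1}}$, then $\sigma(\phi(v))$ is obtained from $\sigma(v)$ by inserting the entry $2^d-1$ in position $t$. The first step is an \emph{order lemma}: for monomials $v,v'$ of the same degree in $P_{k-1}$, we have $v<v'$ if and only if $\phi(v)<\phi(v')$. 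Indeed, since $x_t^{2^d-1}$ and $\theta_{J_t}(v)$ involve disjoint sets of variables, $\omega(\phi(v))=\omega(x_t^{2^d-1})+\omega(v)$, and componentwise addition of a fixed vector preserves the left lexicographic order, so $\omega(v)<\omega(v')\Leftrightarrow\omega(\phi(v))<\omega(\phi(v'))$ and $\omega(v)=\omega(v')\Leftrightarrow\omega(\phi(v))=\omega(\phi(v'))$; likewise, inserting a fixed entry in a fixed position preserves the left lexicographic order on exponent vectors. The equivalence then follows from Definition \ref{defn3}, since $<$ is a total order on monomials of a given degree.

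Next I descend a hit equation. By inadmissibility there are monomials $w_1,\dots,w_r<w$ and polynomials $g_n\in P_k$ with $w+\sum_{j=1}^{r}w_j=\sum_{n\geqslant1}Sq^{n}(g_n)$. View $P_k=\theta_{J_t}(P_{k-1})[x_t]$ and expand $g_n=\sum_{a\geqslant0}x_t^{a}g_{n,a}$ with $g_{n,a}\in\theta_{J_t}(P_{k-1})$. Since $Sq^{j}(x_t^{a})=\binom{a}{j}x_t^{a+j}$ and $\theta_{J_t}(P_{k-1})$ is an $\mathcal A$-submodule of $P_k$, the Cartan formula gives
\begin{equation*}
\sum_{n\geqslant1}Sq^{n}(g_n)=\sum_{n\geqslant1}\ \sum_{a\geqslant0}\ \sum_{j=0}^{n}\binom{a}{j}\,x_t^{a+j}\,Sq^{n-j}(g_{n,a}).
\end{equation*}
Now compare the coefficients of $x_t^{2^{d}-1}$ on the two sides and apply $\theta_{J_t}^{-1}$. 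On the left one obtains $u+\sum_{j=1}^{r}\widetilde w_j$, where $\widetilde w_j=\theta_{J_t}^{-1}(\text{coefficient of }x_t^{2^{d}-1}\text{ in }w_j)$ is either $0$ or a monomial; in the latter case $\phi(\widetilde w_j)=w_j<w=\phi(u)$, so $\widetilde w_j<u$ by the order lemma. On the right one obtains a sum over $n\geqslant1$ and $0\leqslant j\leqslant\min\{n,2^{d}-1\}$ of the terms $\binom{2^{d}-1-j}{j}\,Sq^{n-j}\big(\theta_{J_t}^{-1}(g_{n,\,2^{d}-1-j})\big)$; those with $j<n$ have $n-j\geqslant1$ and hence lie in $\mathcal A^{+}P_{k-1}$, while those with $j=n$ contribute $\sum_{n\geqslant1}\binom{2^{d}-1-n}{n}\,\theta_{J_t}^{-1}(g_{n,\,2^{d}-1-n})$.

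The crux is that this last sum vanishes identically: for $0\leqslant n\leqslant2^{d}-1$ the integer $2^{d}-1-n$ is precisely the bitwise complement of $n$ within $d$ binary digits, so Lucas's theorem shows that $\binom{2^{d}-1-n}{n}$ is odd if and only if every binary digit of $n$ is at most its own complement, i.e.\ if and only if $n=0$; thus $\binom{2^{d}-1-n}{n}\equiv0\pmod2$ for every $n\geqslant1$. This is exactly the point at which it matters that the exponent of $x_t$ is $2^{d}-1$ rather than an arbitrary integer, and it is the step I expect to be the heart of the argument. Consequently $u+\sum_{j=1}^{r}\widetilde w_j\in\mathcal A^{+}P_{k-1}$, where each nonzero $\widetilde w_j$ is a monomial strictly smaller than $u$; hence $u$ is inadmissible in $P_{k-1}$, contrary to hypothesis, which completes the proof. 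The only routine matters left are the elementary order-theoretic claims in the order lemma and the bookkeeping confirming that the $j<n$ terms assemble into a well-defined element of $\mathcal A^{+}P_{k-1}$.
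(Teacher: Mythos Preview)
The paper does not supply its own proof of this proposition; it is quoted from Mothebe--Uys \cite{mo}. Your argument is correct and is essentially the standard one: pass to the contrapositive, project a hit relation for $w=x_t^{2^d-1}\theta_{J_t}(u)$ onto the $x_t^{2^d-1}$-component, and use Lucas's theorem to kill the ``$j=n$'' terms via $\binom{2^d-1-n}{n}\equiv 0\pmod 2$ for $n\geqslant 1$. The order lemma is handled cleanly, and the observation that $\theta_{J_t}(P_{k-1})$ is an $\mathcal A$-submodule (so the Cartan expansion separates variables as claimed) is exactly what is needed. One small cosmetic point: when all $\widetilde w_j$ vanish you are left with $u\in\mathcal A^{+}P_{k-1}$, which still forces $u$ to be inadmissible (the empty sum case of the definition), so nothing is lost there.
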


\begin{nota} Denote by $B_{k}(m)$ the set of all admissible monomials of degree $m$  in $P_k$. We set 
$B_{k}^0(m) = B_{k}(m)\cap P_k^0,\ B_{k}^+(m) = B_{k}(m)\cap P_k^+.$
 
For any weight vector $\omega$ of degree $m$, we denote 
$$B_k(\omega) = B_{k}(m)\cap P_k(\omega),\ B_k^+(\omega) = B_{k}^+(m)\cap P_k(\omega).$$
Let $D$ be a subset in $P_k$. We denote $[D] = \{[f] : f \in D\}$. For $D \subset P_k(\omega)$, we set $[D]_\omega = \{[f]_\omega : f \in D\}$. 
Then, $[B_k(\omega)]_\omega$ and $[B_k^+(\omega)]_\omega$, are the basses of the $\mathbb F_2$-vector spaces $QP_k(\omega)$ and $QP_k^+(\omega) := QP_k(\omega)\cap QP_k^+$ respectively.
\end{nota}

%====================================
\section{The indecomposables of $P_5$ in degree $2^{d}$}\label{s3}
\setcounter{equation}{0}

\subsection{The weight vector of admissible monomials of degree $2^d$}\

\medskip
First of all, we determine the weight vectors of the admissible monomials of degree $2^{d}$ for $d \geqslant 5$.
\begin{lems}\label{bdd5} If $x$ be an admissible monomial of degree $2^{d}$ in $P_5$ for $d \geqslant 5$, then either $\omega(x) = (2)|(1)|^{d-1}$ or $\omega(x) = (4)|(2)|^{d-2}$ or $\omega(x) = (4)|^2|(3)|^{d-4}|(1)$.
\end{lems}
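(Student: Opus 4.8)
The plan is to pin down the weight vector $\omega(x)$ of an admissible monomial $x$ of degree $2^d$ by combining three ingredients: the minimal spike constraint (Theorem~\ref{dlsig}), the monotonicity/boundedness of admissible weight vectors in Proposition~\ref{mdcb3}, and a small number of low-degree base cases. First I would compute the minimal spike $z$ of degree $2^d$ in $P_5$: writing $2^d = (2^{d-1}-1) + (2^{d-2}-1) + \ldots$, one finds, for $d \geqslant 5$, that $z = x_1^{2^{d-1}-1}x_2^{2^{d-2}-1}x_3^{3}$ (so $\mu(2^d) = 3$), hence $\omega(z) = (3)|(2)|^{d-3}|(1)|(1)$ read off from the dyadic digits — let me denote this weight vector $\omega_z$. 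By Theorem~\ref{dlsig}, every admissible $x$ of degree $2^d$ has $\omega(x) \geqslant \omega_z$ in the left lexicographic order, and of course $\omega_1(x) \equiv 2^d \equiv 0 \pmod 2$ while $\omega_1(x) \leqslant 5$, so $\omega_1(x) \in \{2,4\}$ (it cannot be $0$, since a monomial of positive degree has $\omega_1(x)>0$). This splits the argument into two main branches.

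In the branch $\omega_1(x) = 2$: then $\deg x - \omega_1(x) = 2^d - 2 = 2(2^{d-1}-1)$, so after the standard "cut-off" the tail contributes $2^{d-1}-1$ in each subsequent degree slot; more precisely $\sum_{i\geqslant 2}\omega_i(x)2^{i-2} = 2^{d-1}-1$, which forces $\omega_i(x) \geqslant 1$ for $2 \leqslant i \leqslant d$ simply because $2^{d-1}-1 = \sum_{i=2}^{d} 2^{i-2}$ is the all-ones string of length $d-1$. Combined with $\omega_1(x)=2$ and the spike lower bound (which here only allows the all-ones tail, since any larger digit would overshoot), I would conclude $\omega(x) = (2)|(1)|^{d-1}$ exactly. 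In the branch $\omega_1(x) = 4$: now $\deg x - 4 = 2^d - 4$, and $\tfrac14(2^d-4) = 2^{d-2}-1$, so $\sum_{i\geqslant 2}\omega_i(x)2^{i-2} = 2^{d-2}-1$. By Proposition~\ref{mdcb3}(ii), once some $\omega_{t_0}(x) < 5$ all later entries are $< 5$; I would argue $\omega_2(x) \in \{2,4\}$ by parity again (the residue mod 2 of $2^{d-2}-1$ shifted appropriately), and then split: if $\omega_2(x)=2$ the same all-ones argument as above forces the tail to be $(2)|^{d-2}$, giving $\omega(x)=(4)|(2)|^{d-2}$; if $\omega_2(x)=4$ one peels off another layer and is reduced to understanding admissible monomials of a smaller "residual" degree, where — invoking Proposition~\ref{mdcb3} and the known structure in $P_5$ in low degrees — the only surviving possibility is the tail $(3)|^{d-4}|(1)$, yielding $\omega(x) = (4)|^2|(3)|^{d-4}|(1)$. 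Throughout I would use Theorem~\ref{dlcb1}(i): if a prefix weight vector were realized only by inadmissible monomials, the whole monomial is inadmissible, so it suffices to rule out candidate prefixes one digit at a time.

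The main obstacle is the sub-branch $\omega_1(x) = \omega_2(x) = 4$: here the crude spike and parity constraints leave several a priori possibilities for the tail (e.g. $(4)|(3)|\cdots$, or $(3)|(2)|\cdots$, etc.), and eliminating all but $(3)|^{d-4}|(1)$ requires genuinely using admissibility, not just degree and spike bookkeeping. I expect to handle this by an explicit Kameko-style reduction: a monomial with $\omega_1 = \omega_2 = 4$ has the form $(\text{something of degree }8 \text{ in the first two "blocks"}) \cdot (\text{a fourth power})$, and one shows that only weight vectors extending $(4)|^2|(3)$ can support admissible monomials, by exhibiting hit relations (strictly inadmissible monomials in the sense of the definition preceding Theorem~\ref{dlcb1}) that kill the competitors, then propagating via Theorem~\ref{dlcb1}(ii). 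The $d \geqslant 5$ hypothesis is what guarantees the tail $(3)|^{d-4}|(1)$ has the right length ($d-4 \geqslant 1$) and that the low-degree exceptional behaviour (the values $\dim(QP_5)_{2^d}$ for $d \leqslant 4$ quoted after Theorem~\ref{thm1}) does not interfere; I would flag explicitly where $d \geqslant 5$ is used and check the boundary case $d = 5$ by hand against the known value $\dim(QP_5)_{32} = 1004$.
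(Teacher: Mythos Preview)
Your proposal has two concrete errors that derail the argument.

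First, the minimal spike is wrong. For $m = 2^d$ one has $\alpha(2^d + 2) = 2 \leqslant 2$, so $\mu(2^d) = 2$, not $3$. The minimal spike is $z = x_1^{2^d-1}x_2$ (since $2^d + 2 = 2^d + 2^1$), with $\omega(z) = (2)|(1)|^{d-1}$. Your candidate $x_1^{2^{d-1}-1}x_2^{2^{d-2}-1}x_3^3$ does not even have degree $2^d$.

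Second, and more seriously, your ``any larger digit would overshoot'' claim in the $\omega_1(x)=2$ branch is false. The constraints $\sum_{i\geqslant 2}\omega_i(x)2^{i-2} = 2^{d-1}-1$, $\omega(x) \geqslant \omega(z)$ in left lex, and Proposition~\ref{mdcb3} together do \emph{not} force $\omega_i(x)=1$ for all $i\geqslant 2$. For $d=5$ the weight vector $(2,3,2,2)$ has degree $2+6+8+16 = 32$, satisfies $(2,3,2,2) > (2,1,1,1,1)$ in left lex, and respects Proposition~\ref{mdcb3}; yet it is not among your three outcomes. Eliminating it requires a genuine hit-problem input, not arithmetic: the paper writes $x = x_ix_jy^2$ with $y$ admissible of degree $2^{d-1}-1$, invokes the classification of admissible weight vectors in that degree from \cite{sux} (and \cite{ph21} for the base case $d=5$), and then shows that the surviving competitor tails lead, via Theorem~\ref{dlcb1}, to factors that are strictly inadmissible (e.g.\ any monomial of weight $(2,3,2,2)$ is strictly inadmissible by \cite{ph21}). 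The $\omega_1(x)=4$ branch is handled the same way, factoring $x = X_i y^2$ with $y$ of degree $2^{d-1}-2$ and citing \cite{sux} and \cite{smo}. You correctly anticipated that the $(4,4,\ldots)$ sub-branch needs real admissibility arguments, but in fact every branch does; the lemma is not accessible by spike bounds and degree bookkeeping alone.
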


\begin{proof} The lemma is proved by Ph\'uc \cite{ph21} for $d = 5$. Observe  that $z=x_1^{2^d-1}x_2$ is the minimal spike of degree $2^{d}$ in $P_5$ and $ \omega (z) = (2)|(1)|^{d-1}$. Since $2^{d}$ is even, using Theorem \ref{dlsig}, we obtain $\omega_1(x)=2$ or $\omega_1(x)=4$. 
	
If $\omega_1(x) = 2$, then $x=x_ix_jy^2$ with $1 \leqslant i < j \leqslant 5$ and $y$ an admissible monomial of degree $2^{d-1}-1$. By using Proposition \ref{mdcb3} and \cite[Lemma 4.1]{sux} we get either $\omega(y) = (1)|^{d-1}$ or $\omega(y) = (3)|(2)|^{d-3}$ or $\omega(y) = (3,4,3,1)$ for $d=6$. If $\omega(y) = (3,4,3,1)$, then $x = wu^8$ with $w$ a monomial of weight vector $(2,3,4)$ and $u$ a monomial of weight vector $(3,1)$. Since $\mu(24) = 4$, by Theorem \ref{dlsig} $w$ is hit, hence $x$ is inadmissible. If $\omega(y) = (3)|(2)|^{d-3}$, then $x = w_1u_1^{16}$, with $w_1$ a monomial of weight vector $(2,3,2,2)$ and $u_1$ a monomial of weight vector $(2)|^{d-5}$. From Ph\'uc~\cite{ph21} we see that $w_1$ is strictly inadmissible. By Theorem \ref{dlcb1}, $x$ is inadmissible. Thus, $\omega(y) = (1)|^{d-1}$ and $\omega(x) = (2)|(1)|^{d-1}$.

If $\omega_1(x) = 4$. Then $x = X_iy^2$ with $y$ an admissible monomial of degree $2^{d-1}-2$ in $P_5$ and $1 \leqslant i \leqslant 5$. From \cite[Lemma 3.1]{sux} and \cite{smo} we see that either $\omega(y) = (2)|^{d-2}$ or $\omega(y) = (4)|(2)|^{d-4}|(1)$ or  $y = x_1^3x_2^5x_3^6x_4^6x_5^{10}$ for $d=6$. By a direct computation we see that if $y = x_1^3x_2^5x_3^6x_4^6x_5^{10}$, then $x = X_iy^2$ is strictly inadmissible.  So, we obtain either $\omega(x) = (4)|(2)|^{d-2}$ or $\omega(y) = (4)|^2||(3)|^{d-4}|(1)$.	
 The lemma is proved.
\end{proof}

By Lemma \ref{bdd5}, we obtain
\begin{corls} For any integer $d\geqslant 5$, we have
\begin{align*}(QP_5)_{2^{d}} = (QP_5^0)_{2^{d}}&\oplus QP_5^+((2)|(1)|^{d-1})\\ &\oplus QP_5^+((4)|(2)|^{d-2})\oplus QP_5^+((4)|^2|(3)|^{d-4}|(1)).
\end{align*}
\end{corls}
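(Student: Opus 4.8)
The plan is to deduce the corollary as an immediate consequence of Lemma~\ref{bdd5} together with the direct sum decomposition \eqref{ct2.1} and the splitting $QP_5 = QP_5^0 \oplus QP_5^+$. First I would invoke \eqref{ct2.1} in degree $m = 2^d$, which gives
\[
(QP_5)_{2^d} \cong \bigoplus_{\deg\omega = 2^d} QP_5(\omega).
\]
By Lemma~\ref{bdd5}, every admissible monomial of degree $2^d$ in $P_5$ (for $d \geqslant 5$) has weight vector equal to one of the three vectors $(2)|(1)|^{d-1}$, $(4)|(2)|^{d-2}$, or $(4)|^2|(3)|^{d-4}|(1)$. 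Since $[B_5(\omega)]_\omega$ is a basis of $QP_5(\omega)$, this means $QP_5(\omega) = 0$ for every weight vector $\omega$ of degree $2^d$ other than these three. Hence the sum above collapses to just three summands:
\[
(QP_5)_{2^d} \cong QP_5\big((2)|(1)|^{d-1}\big) \oplus QP_5\big((4)|(2)|^{d-2}\big) \oplus QP_5\big((4)|^2|(3)|^{d-4}|(1)\big).
\]

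Next I would split off the $P_5^0$ part. Each of the three weight vectors is the weight vector of the minimal spike $x_1^{2^d-1}x_2$ (for the first one) or of monomials lying in $P_5^0$ as well; more precisely, for every weight vector $\omega$ one has $QP_5(\omega) = QP_5^0(\omega) \oplus QP_5^+(\omega)$, compatibly with $QP_5 = QP_5^0 \oplus QP_5^+$. Summing $QP_5^0(\omega)$ over the three admissible weight vectors reassembles exactly $(QP_5^0)_{2^d}$, because any admissible monomial in $P_5^0$ of degree $2^d$ also has one of these three weight vectors by Lemma~\ref{bdd5}. This yields
\[
(QP_5)_{2^d} = (QP_5^0)_{2^d} \oplus QP_5^+\big((2)|(1)|^{d-1}\big) \oplus QP_5^+\big((4)|(2)|^{d-2}\big) \oplus QP_5^+\big((4)|^2|(3)|^{d-4}|(1)\big),
\]
which is the assertion.

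There is essentially no obstacle here: the corollary is a bookkeeping statement, and all the real content is in Lemma~\ref{bdd5}. The only point requiring a moment's care is checking that the three listed weight vectors are pairwise distinct and that the decomposition $QP_5(\omega) = QP_5^0(\omega)\oplus QP_5^+(\omega)$ is indeed compatible with \eqref{ct2.1} and with the global splitting $QP_5 = QP_5^0 \oplus QP_5^+$ — this follows since $P_5^0$ and $P_5^+$ are $\mathcal{A}$-submodules and the weight vector of a monomial is unaffected by whether all its exponents are positive. So the proof is a one-line combination of Lemma~\ref{bdd5}, the isomorphism \eqref{ct2.1}, and the $P_k^0/P_k^+$ decomposition recalled in Section~\ref{s2}.
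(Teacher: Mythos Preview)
Your proposal is correct and follows exactly the approach the paper intends: the corollary is stated immediately after Lemma~\ref{bdd5} with only the phrase ``By Lemma~\ref{bdd5}, we obtain'' as justification, and you have simply spelled out the routine combination of that lemma with the decomposition~\eqref{ct2.1} and the splitting $QP_5 = QP_5^0 \oplus QP_5^+$.
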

	
\subsection{Computation of $QP_5((2)|(1)|^{d-1})$}\

\medskip
From Kameko \cite{ka} and our work \cite{su2}, for any $d \geqslant 5$, we have
 
%\begin{align*}
\medskip
$B_2^+((2)|(1)|^{d-1}) = \left\{x_1x_2^{2^d-1},\, x_1^{2^d-1}x_2,\, x_1^3x_2^{2^d-3}\right\}$,

$B_3^+((2)|(1)|^{d-1}) = \left\{x_1x_2x_3^{2^d-2},\, x_1x_2^{2^d-2}x_3,\, x_1x_2^2x_3^{2^d-3},\, x_1x_2^3x_3^{2^d-4},\, x_1^3x_2x_3^{2^d-4}\right\}$,

$B_4^+((2)|(1)|^{d-1}) = \left\{u_t=u_{d,t} : 1 \leqslant t\leqslant 7\right\}$, where

\medskip 
\centerline{\begin{tabular}{lll}
$u_{1} = x_1x_2x_3^{2}x_4^{2^d-4}$ &$u_{2} = x_1x_2^{2}x_3x_4^{2^d-4}$ &$u_{3} = x_1x_2^{2}x_3^{2^d-4}x_4$\cr  
$u_{4} = x_1x_2^{2}x_3^{4}x_4^{2^d-7}$ &$u_{5} = x_1x_2^{2}x_3^{5}x_4^{2^d-8}$ &$u_{6} = x_1x_2^{3}x_3^{4}x_4^{2^d-8}$\cr 
$u_{7} = x_1^{3}x_2x_3^{4}x_4^{2^d-8}$. & &\cr 
\end{tabular}}

\medskip
Hence, we get 
\begin{align*}
&\dim QP_5^0((2)|^d) = 3{5\choose 2} + 5{5\choose 3} + 7{5\choose 4} = 115.
\end{align*}
So, we need to determine the space $QP_5^+((2)|(1)|^{d-1})$.
In this subsection we prove the following.
\begin{thms}\label{mdd51}
For any $d \geqslant 5$, $B_5^+((2)|(1)|^{d-1}) =\{a_{d,t}:\, 1 \leqslant t \leqslant 9\}$, where the monomials $a_t = a_{d,t}$ are determined as follows:

\medskip 
\centerline{\begin{tabular}{lll}
$a_{1} = x_1x_2x_3^{2}x_4^{4}x_5^{2^d-8}$ &$a_{2} = x_1x_2^{2}x_3x_4^{4}x_5^{2^d-8}$ &$a_{3} = x_1x_2^{2}x_3^{4}x_4x_5^{2^d-8}$\cr  
$a_{4} = x_1x_2^{2}x_3^{4}x_4^{2^d-8}x_5$ &$a_{5} = x_1x_2^{2}x_3^{4}x_4^{8}x_5^{2^d-15}$ &$a_{6} = x_1x_2^{2}x_3^{4}x_4^{9}x_5^{2^d-16}$\cr  
$a_{7} = x_1x_2^{2}x_3^{5}x_4^{8}x_5^{2^d-16}$ &$a_{8} = x_1x_2^{3}x_3^{4}x_4^{8}x_5^{2^d-16}$ &$a_{9} = x_1^{3}x_2x_3^{4}x_4^{8}x_5^{2^d-16}$.\cr 
\end{tabular}}

\medskip
Consequently, 
$$\dim QP_5^+((2)|(1)|^{d-1}) = 9,\ \dim QP_5((2)|(1)|^{d-1}) = 124.$$
\end{thms}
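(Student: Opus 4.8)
The plan is to prove Theorem \ref{mdd51} by the standard two-sided strategy used throughout this circle of ideas: first exhibit the nine monomials $a_{d,t}$ as admissible elements of $P_5^+$ with weight vector $(2)|(1)|^{d-1}$, and then show that every admissible monomial in $P_5^+(\omega)$ for $\omega = (2)|(1)|^{d-1}$ is, modulo $\mathcal A^+P_5$, a linear combination of these nine (in fact, since we are listing a basis, that every such monomial reduces to exactly one $a_{d,t}$). The upper bound $\dim QP_5^+(\omega)\le 9$ is the substantial half; the lower bound will follow by showing the classes $[a_{d,t}]_\omega$ are linearly independent in $QP_5(\omega)$, for which I would apply the homomorphisms $p_{(i;I)}^{(\omega)}$ of \eqref{ct23} together with the already-known bases $B_4^+(\omega)$, $B_3^+(\omega)$, $B_2^+(\omega)$ listed just above the theorem, and Proposition \ref{mdbs}. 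Concretely, $\dim QP_5(\omega) = (QP_5^0)$-part $+$ $\binom{5}{5}\dim QP_5^+(\omega)$, and since the $0$-part has dimension $115$ (computed above), establishing $\dim QP_5^+(\omega)=9$ is equivalent to $\dim QP_5(\omega)=124$.

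For the upper bound I would argue as follows. Any admissible monomial $x\in P_5^+(\omega)$ has $\omega_1(x)=2$, so $x = x_ix_j y^2$ with $1\le i<j\le 5$ and $y$ admissible of degree $2^{d-1}-1$ with $\omega(y) = (1)|^{d-1}$ by Lemma \ref{bdd5} and Proposition \ref{mdcb3}; hence $y$ is a spike $\prod x_\ell^{2^{b_\ell}-1}$. This already forces $x$ into a very restricted shape. I would then use Theorem \ref{dlcb1}(i) (if $w$ inadmissible then $uw^{2^r}$ inadmissible) to push the problem down: writing $x = x_ix_j(\text{smaller spike})^{2^s}$ and invoking the known admissible monomials in $P_4$ — the set $B_4^+(\omega)$ above has exactly $7$ elements, and $B_4^0(\omega)$ and the lower $B_3,B_2$ data control the monomials with a zero exponent — one enumerates the candidate monomials in $P_5^+(\omega)$ and eliminates the inadmissible ones. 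The key mechanism for elimination is Theorem \ref{dlcb1}(ii) applied to strictly inadmissible "heads", together with the Cartan-formula relations $x_i^{2t+1}x_j^{2s} \equiv$ (lower) that are standard in this subject; monomials not in the list will be shown strictly inadmissible by explicitly writing them as $\sum v_\ell + Sq^1(\cdot) + Sq^2(\cdot)$ with $v_\ell < x$.

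The main obstacle, and the part that will require the most bookkeeping, is the explicit reduction step: showing that the (finitely many, but numerous) candidate monomials $x_ix_j y^2$ with $y$ a spike of degree $2^{d-1}-1$ in $P_5$ that are not among the $a_{d,t}$ are all inadmissible. Because $d$ is a parameter, one cannot simply run a machine computation; instead I would isolate the "stable" combinatorial pattern — the exponents of $a_{d,t}$ differ from those of $u_{d,t}$ (the $P_4$ case) only by inserting a block and the top exponent $2^d - (\text{small})$ absorbs all the $d$-dependence — and prove a reduction lemma of the form: for $d\ge 5$, a monomial whose "prefix" (everything except the top power of the last variable) is strictly inadmissible in the appropriate $\mathcal A_{s-1}$-sense is itself inadmissible, via Theorem \ref{dlcb1}(ii). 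This reduces the infinitely-many-degree problem to finitely many base-case verifications in low degree (essentially $d=5$, already done by Ph\'uc \cite{ph21}, plus a handful of prefix computations), which can then be checked directly. Assembling these pieces gives $[B_5^+(\omega)] = \{[a_{d,t}]_\omega : 1\le t\le 9\}$ spanning; combined with the linear independence from the $p_{(i;I)}$-argument, this yields that it is a basis, so $\dim QP_5^+((2)|(1)|^{d-1}) = 9$, and then $\dim QP_5((2)|(1)|^{d-1}) = 115 + 9 = 124$ by the decomposition $QP_5(\omega) = QP_5^0(\omega)\oplus QP_5^+(\omega)$ and Proposition \ref{mdbs}.
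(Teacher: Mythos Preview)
Your plan matches the paper's proof essentially line for line: the paper uses Lemma \ref{bda61} to supply the strictly inadmissible ``heads'' $w$ so that any $x_ix_jy^2 \notin \{a_{d,t}\}$ factors as $wz_1^{2^r}$ and is killed by Theorem \ref{dlcb1}, then verifies linear independence by applying $p_{(1;2)}$, $p_{(1;3)}$, $p_{(2;3)}$ to a putative relation $\sum\gamma_t a_{d,t}\equiv 0$ and reading off $\gamma_t=0$ against the known $B_4^+(\omega)$. One small correction: an admissible $y$ with $\omega(y)=(1)|^{d-1}$ need not be a spike (e.g.\ $x_1x_2^2x_3^4x_4^{8}x_5^{2^{d-1}-16}$ has that weight vector but is not a spike), though this does not affect your argument since the admissible monomials of weight $(1)|^{d-1}$ are still easy to enumerate.
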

We need a lemma for the proof of the theorem.

\begin{lems}[See \cite{su3}]\label{bda61} Let $(i,j,t,u,v)$ be an arbitrary permutation of $(1,2,3,4,5)$. The following monomials are strictly inadmissible:
	
\smallskip
\ \! {\rm i)} $x_i^2x_jx_t,\, x_i^3x_j^{12}x_t,\, x_i^3x_j^5x_t^8,\, x_i^3x_j^4x_t^9,\ i<j<t.$

\smallskip
\ {\rm ii)}  $x_i^3x_j^4x_ix_u^8,\, x_i^3x_j^4x_i^8x_u,\ i < j < t < u$.
			
\end{lems}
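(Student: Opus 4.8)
The plan is to produce, for each listed monomial $u$ of degree $m$ with $s=\max\{i:\omega_i(u)>0\}$, an explicit element $\theta=\sum_{0\le r\le s-1}Sq^{2^r}(g_r)\in\mathcal A_{s-1}^+P_k$ whose monomial expansion has $u$ as its unique $<$-maximal term (with $<$ as in Definition \ref{defn3}); writing $\theta=u+\sum_j v_j$ then exhibits the required $v_j<u$. A direct weight-vector computation gives $\omega(x_i^2x_jx_t)=(2)|(1)$, so there $s=2$, whereas every other monomial in i) and ii) has weight vector $(2)|(1)|^3$, so $s=4$ and the admissible operations are $Sq^1,Sq^2,Sq^4,Sq^8$. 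The quartic case is immediate: since $Sq^1$ is a derivation with $Sq^1(x_\ell)=x_\ell^2$,
\[ Sq^1(x_ix_jx_t)=x_i^2x_jx_t+x_ix_j^2x_t+x_ix_jx_t^2\in\mathcal A_0^+P_k\subseteq\mathcal A_1^+P_k, \]
and as $i<j<t$ the two trailing monomials have the same weight vector $(2)|(1)$ but strictly smaller exponent vector, so $x_i^2x_jx_t$ is strictly inadmissible.

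\textbf{The degree-$16$ monomials.} Here I will assemble the hit equations by the Cartan formula, and the characteristic difficulty appears already in single expansions, e.g.
\[ Sq^2(x_i^3x_j^2x_t^9)=x_i^3x_j^4x_t^9+x_i^5x_j^2x_t^9+x_i^4x_j^2x_t^{10},\qquad Sq^4(x_i^3x_j^5x_t^4)=x_i^3x_j^5x_t^8+x_i^3x_j^9x_t^4+x_i^6x_j^6x_t^4: \]
a single $Sq^{2^r}(g)$ invariably introduces a monomial exceeding the target, since operations such as $Sq^2(x_i^3)=x_i^5$ or $Sq^4(x_j^5)=x_j^9$ raise an exponent and enlarge the exponent vector. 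The remedy is to take a combination $\theta=\sum_rSq^{2^r}(g_r)$ in which every monomial strictly greater than the target cancels. I will obtain $\theta$ by a leading-term (ordered) elimination: arranging the monomials of weight vector $(2)|(1)|^3$ that exceed the target in decreasing order, I show each such monomial is itself strictly inadmissible, i.e. is the maximal term of its own Cartan relation, and substitute its lower expansion, iterating. Because the monomials strictly decrease at each step and there are finitely many in a fixed degree, the process terminates, leaving $\theta\in\mathcal A_3^+P_k$ with the target as its maximal monomial and every surviving term $<u$.

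\textbf{The four-variable monomials in ii).} Read as $x_i^3x_j^4x_tx_u^8$ and $x_i^3x_j^4x_t^8x_u$, these carry the same weight vector $(2)|(1)|^3$ and are treated by the identical mechanism, the spectator eighth power serving only to fix $s=4$; the elimination is then carried out on the three active variables exactly as above. Where a factorization with a strictly inadmissible core is available, Theorem \ref{dlcb1}(ii) may instead be used to transfer strict inadmissibility across the square factor.

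\textbf{Main obstacle.} The technical heart, and the step I expect to be hardest, is guaranteeing that the ordered elimination closes: every monomial exceeding the target that arises in the telescoping must itself be strictly inadmissible, for otherwise an admissible monomial above the target would survive and the target would in fact be admissible. This is a genuine issue here, for the degree-$8$ restriction $x_i^3x_j^4x_t$ is \emph{admissible}, and it is precisely the raising of $s$ from $3$ to $4$ by multiplication with an eighth power that makes the enlarged monomial strictly inadmissible, opening up the cancellations above. Verifying closure thus reduces to careful bookkeeping of the weight vectors $\omega$ and exponent vectors $\sigma$ of all intermediate monomials, using the inequalities of Definition \ref{defn3} and the monotonicity in Proposition \ref{mdcb3}; this is where the bulk of the computation lies.
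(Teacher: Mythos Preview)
The paper does not prove this lemma at all; it is quoted from \cite{su3} and used as a black box in the proof of Theorem~\ref{mdd51}. So there is no in-paper argument to compare against. What can be compared is the style of proof used for the many analogous strict-inadmissibility lemmas the paper \emph{does} prove (e.g.\ Lemmas~\ref{bda72}, \ref{bda73}, \ref{bda63}): in every case the author simply writes down, for each target monomial $u$, an explicit identity
\[
u \;=\; (\text{sum of monomials }v_j<u)\;+\;\sum_{r}Sq^{2^r}(g_r)\quad\bmod\; P_5^-(\omega),
\]
and checks by inspection that each $v_j$ is smaller. No recursion, no elimination scheme---just the finished formula.

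Your proposal is not yet a proof: it is a description of a search procedure. The degree-$4$ case is complete and correct, but for the degree-$16$ monomials you give only a strategy (``leading-term elimination'') and explicitly defer the computation to a ``Main obstacle'' paragraph. Two concrete gaps:
\begin{itemize}
\item Your elimination scheme requires that every monomial $w>u$ (same weight vector) appearing along the way be itself strictly inadmissible. You flag this but do not verify it, and it is not automatic: for instance your own expansion of $Sq^2(x_i^3x_j^2x_t^9)$ produces $x_i^5x_j^2x_t^9$, and you would next need a hit relation for \emph{that} monomial with only smaller terms. Until you either exhibit such relations or find a seed that avoids large side-terms, nothing is proved.
\item The invocation of Theorem~\ref{dlcb1}(ii) in part ii) does not help as stated: neither $x_i^3x_j^4x_tx_u^8$ nor $x_i^3x_j^4x_t^8x_u$ factors as $w\cdot v^{2^s}$ with $w$ already on your list and $s=\max\{i:\omega_i(w)>0\}$. (The natural candidate $x_i^3x_j^4x_t$ has $s=3$, but the remaining factor is $x_u^8=x_u^{2^3}$, and $x_i^3x_j^4x_t$ is \emph{admissible}, as you yourself note.)
\end{itemize}
To turn this into a proof you must actually produce the explicit identities, as the paper does for its other lemmas; the elimination heuristic may help you \emph{find} them, but the proof is the formula, not the description of the search.
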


\begin{proof}[Proof of Theorem $\ref{mdd51}$] Let $x$ be an admissible monomial of the weight vector $(2)|(1)|^{d-1}$ in $P_5^+$ with $d \geqslant 5$. Then $x = x_ix_jy^2$ with $1 \leqslant i < j \leqslant 5$ and $y$ an admissible monomial of the weight vector $(1)|^{d-1}$. By a direct computation we see that if $z$ is an admissible monomial of weight vector $(1)|^{d-1}$ and $x_ix_jz^2 \ne a_{d,t}$ for all $t,\, 1 \leqslant t \leqslant 9$, then there is a monomial $w$ as given in Lemma \ref{bda61} such that $x = wz_1^{2^{r}}$ with $r$ an integer, $2\leqslant r \leqslant 4$, and $z_1$ a suitable monomial of the weight vector $(1)|^{d-r}$. By Theorem \ref{dlcb1}, $x_ix_jz^2$ is inadmissible. Since $x = x_ix_jy^2$ is admissible, we have $x = a_{d,t}$ for some $t,\, 1 \leqslant t \leqslant 9$. Hence,
$B_5^+((2)|(1)|^{d-1}) \subset \{a_{d,t}: 1\leqslant t\leqslant 9\}.$
	
We now prove the set $\{[a_{d,t}]: 1\leqslant t\leqslant 9 \}$ is linearly independent in the vector space $QP_5((2)|(1)|^{d-1}) \subset QP_5$. Suppose there is a linear relation
\begin{equation}\label{ctd61}
\mathcal S:= \sum_{1\leqslant t \leqslant 9}\gamma_ta_{d,t} \equiv 0,
\end{equation}
where $\gamma_t \in \mathbb F_2$. We denote $\gamma_{\mathbb J} = \sum_{t \in \mathbb J}\gamma_t$ for any $\mathbb J \subset \{t\in \mathbb N:1\leqslant t \leqslant 9\}$.

Consider the homomorphism $p_{(i;I)}:P_5\to P_4$ defined by \eqref{ct23} for $k=5$. By applying $p_{(1;2)}$ to \eqref{ctd61}, we obtain 
\begin{align}\label{c41}
p_{(1;2)}(\mathcal S) \equiv \gamma_{\{3,4,5,6,7\}}&u_{ d,1} +  \gamma_{3}u_{ d,2} +   \gamma_{4}u_{ d,3} +  \gamma_{5}u_{ d,4}\notag\\ &+  \gamma_{6}u_{ d,5} +  \gamma_{7}u_{ d,6} +  \gamma_{\{2,3,4,5,6,7\}}u_{ d,7}  \equiv 0.
\end{align}
From \eqref{c41}, we get
\begin{equation}\label{c411}
\gamma_t = 0 \mbox{ for } t \in \{2,\, 3,\, 4,\, 5,\, 6,\, 7\}.
\end{equation}
Applying $p_{(1;3)}$ to \eqref{ctd61} and using \eqref{c411} we obtain
\begin{align}\label{c42}
p_{(1;3)}(\mathcal S) &\equiv \gamma_{8}u_{ d,1} +  \gamma_{\{1,8,9\}}u_{ d,6} +   \gamma_{8}u_{ d,7}  \equiv 0.
\end{align}
The relation \eqref{c42} implies
\begin{equation}\label{c421}
\gamma_8 = 0 \mbox{ and } \gamma_1 =\gamma_9.
\end{equation}
By a simple computation using \eqref{c411} and \eqref{c421} we obtain
\begin{align}\label{c43}
p_{(2;3)}(\mathcal S) &\equiv \gamma_{1}u_{ d,1} +  \gamma_{1}u_{ d,7} \equiv 0.
\end{align}
From \eqref{c43} it implies $\gamma_1 =0$. Hence, we get $\gamma_t =0$ for all $t, \ 1 \leqslant t \leqslant 9$. The proof is completed.
\end{proof}
%\begin{rems} We note that Theorem \ref{mdd51} is true for any $d \geqslant 5$.\end{rems}

\subsection{Computation of $QP_5^+((4)|(2)|^{d-2})$ for $d \geqslant 6$}\label{s32}\

\medskip
From our work \cite{su2}, we have $|B_4^+((4)|(2)|^{d-2})| = 35$ for any $d \geqslant 5$, hence we get $|B_5^0((4)|(3)|^{d-2})| = 35{5\choose 4} = 175$. In this subsection we prove the following theorem.

\begin{thms}\label{mdd52} For any $d \geqslant 6$, $B_5^+((4)|(2)|^{d-2})$ is the set of $290$ admissible monomials which are determined as in Subsection $\ref{s51}$. Consequently, 
$$\dim QP_5^+((4)|(2)|^{d-2}) = 290,\ \dim QP_5((4)|(2)|^{d-2}) = 465.$$
\end{thms}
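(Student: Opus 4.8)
The plan is to mirror the structure of the proof of Theorem \ref{mdd51}, but with substantially more bookkeeping. First I would establish the inclusion $B_5^+((4)|(2)|^{d-2}) \subset \Sigma$, where $\Sigma$ is the explicit list of $290$ monomials given in Subsection \ref{s51}. Starting from an admissible monomial $x$ of weight vector $(4)|(2)|^{d-2}$ in $P_5^+$, Lemma \ref{bdd5} and Theorem \ref{dlsig} force $\omega_1(x) = 4$, so $x = X_i y^2$ where $y$ is admissible of degree $2^{d-1}-2$ and weight vector $(2)|^{d-2}$, and $i$ ranges over a small set of indices compatible with $x \in P_5^+$. I would then invoke the known structure of $B_5^+((2)|^{d-2})$ (obtained from the preceding subsection's computation together with Kameko's squaring map and the $\mu(m)=k-2$ results of \cite{su4}) to enumerate the candidate $y$'s, and for each candidate $y$ not producing a monomial on the list $\Sigma$, exhibit a strictly inadmissible factor $w$ — drawn from Lemma \ref{bda61} and an enlarged catalogue of strictly inadmissible monomials that I would prove separately — so that $x = w z_1^{2^r}$ for a suitable $r$ with $2 \leqslant r \leqslant 4$; Theorem \ref{dlcb1} then kills $x$. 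This is the step I expect to be the main obstacle: the number of cases is large ($290$ surviving monomials means hundreds of inadmissibility arguments), and unlike the $(2)|(1)|^{d-1}$ case there is no single uniform pattern, so the catalogue of strictly inadmissible "small" monomials must be assembled carefully and checked by explicit $Sq^i$ computations.

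Next I would prove the reverse inclusion, i.e. that every monomial in $\Sigma$ is admissible. For this I would use Proposition \ref{mdmo} to handle those monomials of the form $x_t^{2^r-1}\theta_{J_t}(u)$ with $u$ admissible in $P_4$, pulling back admissibility from the known set $B_4^+((4)|(2)|^{d-2})$ of $35$ monomials and its spike-extended relatives; the remaining monomials in $P_5^+$ that are genuinely five-variable would be shown admissible by checking that they cannot be written modulo $\mathcal A^+ P_5$ as a sum of strictly smaller monomials — in practice this reduces, via Theorem \ref{dlcb1} and Proposition \ref{mdcb3}, to a finite verification independent of $d$, since the weight vector stabilizes after the first two entries. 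Here I would lean on the fact that $QP_5((4)|(2)|^{d-2})$ for $d \geqslant 6$ is, up to the Kameko isomorphism in the relevant range, identified with a fixed vector space, so it suffices to settle the base case $d = 6$ by the same technique used by Ph\'uc \cite{ph21} for smaller degrees and then transport the answer.

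The final step is the linear independence of $[\Sigma]_\omega$ in $QP_5((4)|(2)|^{d-2})$. I would set up a hypothetical relation $\mathcal S = \sum_{u \in \Sigma} \gamma_u u \equiv 0$ with $\gamma_u \in \mathbb F_2$ and apply the homomorphisms $p_{(i;I)} : P_5 \to P_4$ of \eqref{ct23} for all $(i;I) \in \mathcal N_5$. Since by Lemma \ref{bdm} each $p_{(i;I)}$ preserves the weight vector and the images land in the span of $B_4((4)|(2)|^{d-2})$, whose classes are linearly independent by \cite{su2}, each application yields a batch of linear equations among the $\gamma_u$. Chasing these — as in equations \eqref{c41}–\eqref{c43} but with many more maps — I would first pin down all coefficients of monomials lying in the image of some $\theta_J$, then handle the "symmetric" leftover monomials (those in $\widetilde{SF}_5(\omega)$, i.e. in the common kernel of all $p_{(i;I)}^{(\omega)}$) by a direct argument: show that no nontrivial combination of them is hit, typically by applying an explicit $Sq^1$ or $Sq^2$ and comparing admissible monomials of lower weight, or by using the criteria of Singer and Silverman recalled in Section \ref{s2}. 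Combining the three steps gives $\dim QP_5^+((4)|(2)|^{d-2}) = 290$, and then Proposition \ref{mdbs} together with $|B_5^0((4)|(2)|^{d-2})| = 175$ yields $\dim QP_5((4)|(2)|^{d-2}) = 465$, completing the proof.
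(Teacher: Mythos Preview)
Your overall strategy matches the paper's, but there are two concrete issues. First, the bound $2 \leqslant r \leqslant 4$ on the exponent in the factorization $x = w z_1^{2^r}$ is too small: the catalogue of strictly inadmissible monomials you need includes monomials of degree $64$ (weight vector $(4)|(2)|^4$), so $r$ must range up to $6$. The paper assembles this catalogue in two lemmas --- a degree-$32$ list essentially due to Ph\'uc \cite{ph21}, and a new degree-$64$ list requiring explicit $Sq^i$ identities up to $Sq^{16}$ --- and without the degree-$64$ items you will find candidates $X_i y^2 \notin \Sigma$ that you cannot eliminate. Second, your Step 2 is both unnecessary and, as stated, does not work: once you have the inclusion $B_5^+(\omega) \subset \Sigma$ and the linear independence of $[\Sigma]_\omega$, admissibility of every monomial in $\Sigma$ follows by a cardinality count, so there is nothing to prove directly; moreover the ``Kameko isomorphism'' you invoke does not identify $QP_5((4)|(2)|^{d-2})$ for different $d$ --- stabilization in $d$ is a consequence of the argument, not an input to it.

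One efficiency the paper exploits that you miss: it splits $\Sigma = A(d) \cup B(d)$ where $A(d)$ consists of the $85$ monomials $x_i^{2^{d-1}-1}\theta_{J_i}(u)$ with $u \in B_4^+((3)|(1)^{d-2})$. These are admissible by Proposition \ref{mdmo}, and since $\nu(x) = 2^{d-1}-1$ for $x \in A(d)$ while $\nu(x) < 2^{d-1}-1$ for $x \in B(d)$, the subspaces $\langle [A(d)]_\omega\rangle$ and $\langle [B(d)]_\omega\rangle$ meet trivially. This reduces the linear-independence check to the $205$ monomials of $B(d)$, and for these the homomorphisms $p_{(i;j)}$ and $p_{(i;(u,v))}$ alone already force all coefficients to vanish. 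There is no residual $\widetilde{SF}_5(\omega)$ piece for $d \geqslant 6$ (in contrast with $d=5$, where such a piece does appear), so your anticipated direct argument with $Sq^1$ or $Sq^2$ is not needed.
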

The theorem is proved by using Theorems \ref{dlcb1}, \ref{dlsig} and the following lemmas.
\begin{lems}[See Ph\'uc \cite{ph21}]\label{bda62} Let $(i,j,t,u,v)$ be an arbitrary permutation of $(1,2,3,4,5)$. The following monomials are strictly inadmissible:
	
\smallskip
\ \! {\rm i)} $x_i^2x_jx_tx_ux_v^3,\, i<j<t<u$.
	
\smallskip
\ {\rm ii)} $x_r^{15}\theta_{J_r}(w),\, 1\leqslant r\leqslant 5,$ with $w$ one of the monomials: $x_1^{3}x_2^{12}x_3x_4$, $x_1^{3}x_2^{4}x_3x_4^{9}$, $x_1^{3}x_2^{4}x_3^{9}x_4$, and 

\medskip
\centerline{\begin{tabular}{lllll}
$x_1^{3}x_2^{4}x_3^{9}x_4^{3}x_5^{13}$& $x_1^{3}x_2^{4}x_3^{9}x_4^{7}x_5^{9}$& $x_1^{3}x_2^{5}x_3^{8}x_4^{7}x_5^{9}$& $x_1^{3}x_2^{5}x_3^{14}x_4x_5^{9} $\cr  $x_1^{3}x_2^{5}x_3^{14}x_4^{9}x_5$& $x_1^{3}x_2^{12}x_3x_4^{3}x_5^{13}$& $x_1^{3}x_2^{12}x_3^{3}x_4x_5^{13}$& $x_1^{3}x_2^{12}x_3^{3}x_4^{13}x_5 $\cr  $x_1^{7}x_2^{11}x_3^{4}x_4x_5^{9}$& $x_1^{7}x_2^{11}x_3^{4}x_4^{9}x_5$& $x_1^{7}x_2^{11}x_3^{12}x_4x_5$.& \cr  
\end{tabular}}
\end{lems}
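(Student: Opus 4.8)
The plan is to verify strict inadmissibility directly from the definition. For each listed monomial $u$ with weight vector $\omega(u)=(\omega_1,\ldots,\omega_s)$, so that $s=\max\{i:\omega_i(u)>0\}$, I must produce monomials $v_1,\ldots,v_r$ with $v_j<u$ in the order of Definition~\ref{defn3} together with an explicit congruence $u+\sum_{j=1}^r v_j\in\mathcal A_{s-1}^+P_5$; here $\mathcal A_{s-1}^+P_5=\sum_{1\leqslant i\leqslant 2^{s-1}}Sq^i(P_5)$, so it suffices to use the generating operations $Sq^1,Sq^2,\ldots,Sq^{2^{s-1}}$. The whole statement is invariant under permuting $(i,j,t,u,v)$, so for each family it is enough to construct the witness for one representative ordering and then transport it by the corresponding permutation of variables, which preserves both the order relation of Definition~\ref{defn3} and membership in $\mathcal A_{s-1}^+P_5$.

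For part (i) the monomial $x_i^2x_jx_tx_ux_v^3$ has $\omega=(4,2)$ and hence $s=2$, so the target is $\mathcal A_1^+P_5$, spanned by $Sq^1$ and $Sq^2$ applied to polynomials. First I would expand, via the Cartan formula, $Sq^1$ on suitable degree-$7$ monomials and $Sq^2$ on suitable degree-$6$ monomials built from the same five variables, using $Sq^{2^i}(x^a)=\binom{a}{2^i}x^{a+2^i}$ on each factor. Collecting these expansions so that $x_i^2x_jx_tx_ux_v^3$ appears exactly once and every other monomial produced is strictly smaller then yields the required congruence. The check that the remaining terms are $<u$ is made against Definition~\ref{defn3}, comparing weight vectors first and, when these coincide, exponent vectors.

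For part (ii) I would reduce the large monomials to smaller strictly inadmissible cores. Each $u$ there has $s\in\{4,5\}$, so a direct treatment would work modulo $\mathcal A_{s-1}^+P_5$ using $Sq^1,Sq^2,Sq^4$ and $Sq^8$; but it is more economical to exhibit, after a permutation, a factorization $u=W\,z^{2^{s_0}}$ in which $W$ is strictly inadmissible with $\omega_{s_0}(W)\neq 0$ and $\omega_i(W)=0$ for $i>s_0$, so that Theorem~\ref{dlcb1}(ii) immediately promotes strict inadmissibility of $W$ to that of $u$. The cores $W$ arising this way are of degree $8$ or $16$: those that are permutations of the monomials in Lemma~\ref{bda61} are already known to be strictly inadmissible, and the remaining cores are dispatched by the same direct Cartan reduction used in part (i). Both the monomials $x_r^{15}\theta_{J_r}(w)$ and the degree-$32$ monomials tabulated in the statement are handled by one of these two routes.

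The main obstacle is organizational rather than conceptual: the two families contain many monomials (and, through the permutations, many more), the degrees are as large as $32$, and for each one must exhibit an explicit Steenrod witness and then certify that every monomial it generates is genuinely smaller, never equal to or larger than $u$. The principal risk is that a careless choice of source polynomials produces a term of the same weight vector but larger exponent vector, which would invalidate the witness; guarding against this forces one to choose the source polynomials so that every spillover term falls into a strictly lower weight vector, or failing that a strictly lower exponent vector. Exploiting the permutation symmetry and, wherever possible, the factorization $W\,z^{2^{s_0}}$ together with Theorem~\ref{dlcb1}(ii), keeps the number of genuinely independent computations small and confines the unavoidable hand calculation to a handful of degree-$8$ and degree-$16$ base cases.
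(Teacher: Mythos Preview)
The paper does not prove this lemma; it is quoted from Ph\'uc~\cite{ph21}. Your overall method---direct Cartan-formula verification producing a relation in $\mathcal A_{s-1}^+P_5$---is standard and matches how the paper treats the companion Lemmas~\ref{bda63} and~\ref{bda72}--\ref{bda75}. There is, however, a genuine gap in your reduction: you assert that permuting the variables preserves the order of Definition~\ref{defn3}, so that one witness transports to all orderings. This is false. The weight vector $\omega$ is permutation-invariant, but $\sigma(x)=(\nu_1(x),\ldots,\nu_5(x))$ is taken in the fixed variable order, so when $\omega(v_j)=\omega(u)$ the inequality $\sigma(v_j)<\sigma(u)$ need not survive (e.g.\ $x_1x_2^2<x_1^2x_2$, yet swapping $x_1\leftrightarrow x_2$ reverses it). The witnesses arising here do contain same-weight terms (compare the explicit relations in the proof of Lemma~\ref{bda63}), so the issue is live. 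You must either treat each ordering separately, or build the witness so that every same-weight term carries a strictly smaller exponent at position~$i=\min\{i,j,t,u\}$; for part~(i), taking $Sq^1(x_ix_jx_tx_ux_v^3)$ does exactly this and works uniformly in the position of $v$.

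A secondary problem: your proposed shortcut for part~(ii), factoring $u=Wz^{2^{s_0}}$ and invoking Theorem~\ref{dlcb1}(ii) with $W$ drawn from Lemma~\ref{bda61}, cannot work as stated. The monomials of Lemma~\ref{bda61} all have weight vectors beginning with~$2$, whereas any core $W$ extracted from a part-(ii) monomial has weight vector beginning with~$4$, so no such $W$ appears there. The direct computation with $Sq^1,Sq^2,Sq^4,Sq^8$, as carried out in the proof of Lemma~\ref{bda63}, is what is actually required.
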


\begin{lems}\label{bda63} Let $(i,j,t,u,v)$ be an arbitrary permutation of $(1,2,3,4,5)$. The following monomials are strictly inadmissible:
	
\smallskip
\ {\rm i)} $x_r^{31}\theta_{J_r}(w),\, 1\leqslant r\leqslant 5$, with $w$ one of the following monomials: $x_1^{3}x_2^{5}x_3^{8}x_4^{17}$, $x_1^{3}x_2^{5}x_3^{9}x_4^{16}$, $x_1^{3}x_2^{5}x_3^{24}x_4$. 
	
\smallskip
\ {\rm ii)} $x_i^{3}x_j^{7}x_t^{8}x_u^{17}x_v^{29}$, $x_i^{3}x_j^{7}x_t^{9}x_u^{16}x_v^{29}$, $x_i^{3}x_j^{7}x_t^{9}x_u^{17}x_v^{28}$, $ i,\, j \leqslant 2 < t,\, u,\, v$ and 

\smallskip
\centerline{\begin{tabular}{lllll}
$x_1^{3}x_2^{4}x_3^{7}x_4^{25}x_5^{25}$& $x_1^{3}x_2^{4}x_3^{11}x_4^{17}x_5^{29}$& $x_1^{3}x_2^{4}x_3^{11}x_4^{29}x_5^{17}$& $x_1^{3}x_2^{5}x_3^{6}x_4^{25}x_5^{25} $\cr  $x_1^{3}x_2^{5}x_3^{8}x_4^{19}x_5^{29}$& $x_1^{3}x_2^{5}x_3^{9}x_4^{17}x_5^{30}$& $x_1^{3}x_2^{5}x_3^{9}x_4^{18}x_5^{29}$& $x_1^{3}x_2^{5}x_3^{9}x_4^{19}x_5^{28} $\cr  $x_1^{3}x_2^{5}x_3^{9}x_4^{30}x_5^{17}$& $x_1^{3}x_2^{5}x_3^{10}x_4^{17}x_5^{29}$& $x_1^{3}x_2^{5}x_3^{10}x_4^{29}x_5^{17}$& $x_1^{3}x_2^{5}x_3^{11}x_4^{16}x_5^{29} $\cr  $x_1^{3}x_2^{5}x_3^{11}x_4^{17}x_5^{28}$& $x_1^{3}x_2^{5}x_3^{11}x_4^{28}x_5^{17}$& $x_1^{3}x_2^{5}x_3^{11}x_4^{29}x_5^{16}$& $x_1^{3}x_2^{5}x_3^{24}x_4^{3}x_5^{29} $\cr  $x_1^{3}x_2^{5}x_3^{25}x_4^{2}x_5^{29}$& $x_1^{3}x_2^{5}x_3^{25}x_4^{3}x_5^{28}$& $x_1^{3}x_2^{5}x_3^{25}x_4^{30}x_5$& $x_1^{3}x_2^{7}x_3^{4}x_4^{25}x_5^{25} $\cr  $x_1^{3}x_2^{7}x_3^{12}x_4^{17}x_5^{25}$& $x_1^{3}x_2^{7}x_3^{12}x_4^{25}x_5^{17}$& $x_1^{3}x_2^{7}x_3^{13}x_4^{16}x_5^{25}$& $x_1^{3}x_2^{7}x_3^{13}x_4^{24}x_5^{17} $\cr  $x_1^{3}x_2^{7}x_3^{13}x_4^{25}x_5^{16}$& $x_1^{3}x_2^{7}x_3^{24}x_4x_5^{29}$& $x_1^{3}x_2^{7}x_3^{24}x_4^{5}x_5^{25}$& $x_1^{3}x_2^{7}x_3^{24}x_4^{29}x_5 $\cr  $x_1^{3}x_2^{7}x_3^{28}x_4x_5^{25}$& $x_1^{3}x_2^{7}x_3^{28}x_4^{25}x_5$& $x_1^{3}x_2^{7}x_3^{29}x_4^{24}x_5$& $x_1^{3}x_2^{28}x_3^{3}x_4^{5}x_5^{25} $\cr  $x_1^{7}x_2^{3}x_3^{4}x_4^{25}x_5^{25}$& $x_1^{7}x_2^{3}x_3^{12}x_4^{17}x_5^{25}$& $x_1^{7}x_2^{3}x_3^{12}x_4^{25}x_5^{17}$& $x_1^{7}x_2^{3}x_3^{13}x_4^{16}x_5^{25} $\cr  $x_1^{7}x_2^{3}x_3^{13}x_4^{24}x_5^{17}$& $x_1^{7}x_2^{3}x_3^{13}x_4^{25}x_5^{16}$& $x_1^{7}x_2^{3}x_3^{24}x_4x_5^{29}$& $x_1^{7}x_2^{3}x_3^{24}x_4^{5}x_5^{25} $\cr  $x_1^{7}x_2^{3}x_3^{24}x_4^{29}x_5$& $x_1^{7}x_2^{3}x_3^{28}x_4x_5^{25}$& $x_1^{7}x_2^{3}x_3^{28}x_4^{25}x_5$& $x_1^{7}x_2^{3}x_3^{29}x_4^{24}x_5 $\cr  $x_1^{7}x_2^{11}x_3^{5}x_4^{16}x_5^{25}$& $x_1^{7}x_2^{11}x_3^{5}x_4^{24}x_5^{17}$& $x_1^{7}x_2^{11}x_3^{5}x_4^{25}x_5^{16}$& $x_1^{7}x_2^{11}x_3^{21}x_4^{8}x_5^{17} $\cr  $x_1^{7}x_2^{11}x_3^{21}x_4^{9}x_5^{16}$& $x_1^{7}x_2^{11}x_3^{21}x_4^{24}x_5$& $x_1^{7}x_2^{27}x_3^{5}x_4^{8}x_5^{17}$& $x_1^{7}x_2^{27}x_3^{5}x_4^{9}x_5^{16} $\cr  $x_1^{7}x_2^{27}x_3^{5}x_4^{24}x_5$& & &\cr   
\end{tabular}}
\end{lems}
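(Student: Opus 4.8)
The plan is to check the definition of strict inadmissibility directly for each monomial in the list. Every such monomial $u$ has degree $64$, and from its exponents one reads off $\omega(u)=(4)|(2)|^4$, so $s=\max\{i:\omega_i(u)>0\}=5$; hence it suffices to exhibit monomials $v_1,\dots,v_r$ with $v_j<u$ in the order of Definition \ref{defn3} together with polynomials $g_0,\dots,g_4\in P_5$ for which $u=\sum_{j=1}^{r}v_j+\sum_{i=0}^{4}Sq^{2^i}(g_i)$, because $\mathcal A_4$ is generated by $Sq^1,Sq^2,Sq^4,Sq^8,Sq^{16}$ and the last sum therefore lies in $\mathcal A_4^+P_5$. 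Each such identity I would build by hand: start from a monomial obtained from $u$ by lowering one exponent by some power $2^i$, apply $Sq^{2^i}$, and expand by the Cartan formula; this returns $u$ together with a sum of monomials, and one iterates on the reducible ones — using Theorem \ref{dlcb1} to reduce an error term that is itself strictly inadmissible and Theorem \ref{dlsig} to drop an error term whose weight vector lies below that of the minimal spike $x_1^{63}x_2$ of degree $64$ — until reaching $u+\sum_j v_j\in\mathcal A_4^+P_5$ with every surviving $v_j$ satisfying $v_j<u$; the inequalities $v_j<u$ are then verified by comparing weight vectors, appealing to Proposition \ref{mdcb3} and, in the ties, to exponent vectors.

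To hold down the number of cases I would use two reductions. First, a permutation of $x_1,\dots,x_5$ is an $\mathcal A$-algebra automorphism of $P_5$ commuting with the Steenrod squares, so within each orbit of the listed monomials under such permutations — in particular within each family $x_r^{31}\theta_{J_r}(w)$, $1\leqslant r\leqslant 5$, and within each family obtained by permuting a fixed pattern such as $x_i^{3}x_j^{7}x_t^{8}x_u^{17}x_v^{29}$ — it is enough to produce one identity and transport it, checking only that the permutation still carries every error term strictly below the permuted monomial (automatic when the two weight vectors differ, a brief check otherwise). Second, whenever one of the listed monomials happens to factor as $u'z^{2^t}$ with $u'$ a permutation of a monomial already shown strictly inadmissible in Lemma \ref{bda61} or Lemma \ref{bda62} and $t$ equal to the length of $\omega(u')$, its strict inadmissibility is immediate from Theorem \ref{dlcb1}(ii), with no new computation.

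The difficulty is not conceptual but one of bulk: once the permutation orbits are expanded there are on the order of a hundred monomials, and for each one must locate a polynomial $g_i$ whose image under $Sq^{2^i}$ cancels the leading term while introducing only strictly smaller monomials, which in practice amounts to choosing $g_i$ after tracking cancellations among many degree-$64$ monomials. I expect the hardest instances to be those that sit just below an admissible monomial — for example $x_i^{3}x_j^{7}x_t^{9}x_u^{17}x_v^{28}$ and the monomials $x_r^{31}\theta_{J_r}(w)$ — where the list of error terms is longest and it takes the most care to confirm that each of them is genuinely smaller than the monomial being reduced.
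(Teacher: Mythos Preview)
Your plan is essentially the paper's own: it proves a handful of representative cases (Part i) for $r=5$, and four specific monomials in Part ii)) by writing down explicit identities of the form $u=\sum v_j+\sum_i Sq^{2^i}(g_i)\ \bmod\ P_5^-((4)|(2)|^4)$ via the Cartan formula, then declares the remaining cases follow by similar computation, which is exactly your permutation-orbit reduction. One small correction: you should not invoke Theorem~\ref{dlsig} to discard error terms of low weight, since strict inadmissibility requires the residue to lie in $\mathcal A_{s-1}^+P_5$ rather than $\mathcal A^+P_5$; instead, any monomial with weight vector strictly below $(4)|(2)|^4$ is automatically smaller than $u$ in the order of Definition~\ref{defn3} and simply joins the list of $v_j$'s, which is precisely what the paper's ``$\bmod\ P_5^-((4)|(2)|^4)$'' records.
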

\begin{proof} We prove Part i) for $r = 5$. We have
\begin{align*}
x_1^{3}x_2^{5}&x_3^{24}x_4x_5^{31} = x_1^{2}x_2^{5}x_3^{25}x_4x_5^{31} + x_1^{3}x_2^{4}x_3^{25}x_4x_5^{31} + x_1^{3}x_2^{5}x_3^{17}x_4^{8}x_5^{31}\\
&  +  Sq^1\big(x_1^{3}x_2^{3}x_3^{25}x_4x_5^{31}\big)  + Sq^2\big(x_1^{2}x_2^{3}x_3^{25}x_4x_5^{31} + x_1^{5}x_2^{3}x_3^{21}x_4^{2}x_5^{31}\\ 
& + x_1^{5}x_2^{3}x_3^{22}x_4x_5^{31}\big) +  Sq^4\big(x_1^{3}x_2^{3}x_3^{21}x_4^{2}x_5^{31} + x_1^{3}x_2^{3}x_3^{22}x_4x_5^{31} + x_1^{3}x_2^{9}x_3^{13}x_4^{4}x_5^{31}\big)\\ 
& +  Sq^8\big(x_1^{3}x_2^{5}x_3^{13}x_4^{4}x_5^{31} + x_1^{3}x_2^{5}x_3^{10}x_4x_5^{37}\big)\ \mbox{mod}\big(P_5^-((4)|(2)|^{4}\big),\\
x_1^{3}x_2^{5}&x_3^{8}x_4^{17}x_5^{31} = x_1^{2}x_2^{5}x_3x_4^{25}x_5^{31} + x_1^{3}x_2^{4}x_3x_4^{25}x_5^{31} + x_1^{3}x_2^{5}x_3x_4^{24}x_5^{31}\\ 
& +  Sq^1\big(x_1^{3}x_2^{3}x_3x_4^{25}x_5^{31}\big) +  Sq^2\big(x_1^{2}x_2^{3}x_3x_4^{25}x_5^{31} + x_1^{5}x_2^{3}x_3x_4^{22}x_5^{31}\\ 
& + x_1^{5}x_2^{3}x_3^2x_4^{21}x_5^{31}\big) +  Sq^4\big(x_1^{3}x_2^{3}x_3x_4^{22}x_5^{31} + x_1^{3}x_2^{3}x_3^{2}x_4^{21}x_5^{31} + x_1^{3}x_2^{9}x_3^{4}x_4^{13}x_5^{31}\big)\\ 
& +  Sq^8\big(x_1^{3}x_2^{3}x_3x_4^{14}x_5^{35} + x_1^{3}x_2^{3}x_3^{2}x_4^{13}x_5^{35} + x_1^{3}x_2^{5}x_3^{4}x_4^{13}x_5^{31}\\ 
&\quad + x_1^{3}x_2^{5}x_3^{4}x_4^{21}x_5^{23}\big) +  Sq^{16}\big(x_1^{3}x_2^{5}x_3^{4}x_4^{13}x_5^{23}\big)\ \mbox{mod}\big(P_5^-((4)|(2)|^{4}\big), \\
x_1^{3}x_2^{5}&x_3^{9}x_4^{16}x_5^{31} = x_1^{2}x_2^{5}x_3^{9}x_4^{17}x_5^{31} + x_1^{3}x_2^{4}x_3^{9}x_4^{17}x_5^{31} + x_1^{3}x_2^{5}x_3^{8}x_4^{17}x_5^{31}\\
& + Sq^1\big(x_1^{3}x_2^{3}x_3^{3}x_4^{23}x_5^{31}\big) +  Sq^2\big(x_1^{2}x_2^{3}x_3^{5}x_4^{21}x_5^{31} + x_1^{4}x_2^{5}x_3^{3}x_4^{15}x_5^{35}\\
& + x_1^{4}x_2^{5}x_3^{3}x_4^{19}x_5^{31} + x_1^{5}x_2^{3}x_3^{3}x_4^{20}x_5^{31} + x_1^{5}x_2^{3}x_3^{4}x_4^{15}x_5^{35} + x_1^{5}x_2^{3}x_3^{4}x_4^{19}x_5^{31}\\
& + x_1^{5}x_2^{4}x_3^{3}x_4^{15}x_5^{35} + x_1^{5}x_2^{4}x_3^{3}x_4^{19}x_5^{31}\big) +  Sq^4\big(x_1^{2}x_2^{9}x_3^{5}x_4^{13}x_5^{31} + x_1^{3}x_2^{3}x_3^{3}x_4^{20}x_5^{31}\\
& + x_1^{3}x_2^{3}x_3^{8}x_4^{15}x_5^{31} + x_1^{3}x_2^{8}x_3^{3}x_4^{15}x_5^{31} + x_1^{3}x_2^{8}x_3^{5}x_4^{13}x_5^{31} + x_1^{3}x_2^{9}x_3^{4}x_4^{13}x_5^{31}\\
& + x_1^{3}x_2^{9}x_3^{5}x_4^{12}x_5^{31} + x_1^{8}x_2^{3}x_3^{3}x_4^{15}x_5^{31}\big) +  Sq^8\big(x_1^{2}x_2^{5}x_3^{5}x_4^{13}x_5^{31} + x_1^{2}x_2^{5}x_3^{5}x_4^{21}x_5^{23}\\
& + x_1^{3}x_2^{3}x_3^{3}x_4^{12}x_5^{35} + x_1^{3}x_2^{3}x_3^{3}x_4^{15}x_5^{32} + x_1^{3}x_2^{3}x_3^{4}x_4^{11}x_5^{35} + x_1^{3}x_2^{3}x_3^{4}x_4^{15}x_5^{31}\\
& + x_1^{3}x_2^{3}x_3^{4}x_4^{23}x_5^{23} + x_1^{3}x_2^{3}x_3^{8}x_4^{19}x_5^{23} + x_1^{3}x_2^{4}x_3^{3}x_4^{11}x_5^{35} + x_1^{3}x_2^{4}x_3^{3}x_4^{15}x_5^{31}\\
& + x_1^{3}x_2^{4}x_3^{3}x_4^{23}x_5^{23} + x_1^{3}x_2^{4}x_3^{5}x_4^{13}x_5^{31} + x_1^{3}x_2^{4}x_3^{5}x_4^{21}x_5^{23} + x_1^{3}x_2^{5}x_3^{4}x_4^{13}x_5^{31}\\
& + x_1^{3}x_2^{5}x_3^{4}x_4^{21}x_5^{23} + x_1^{3}x_2^{5}x_3^{5}x_4^{12}x_5^{31} + x_1^{3}x_2^{5}x_3^{5}x_4^{20}x_5^{23} + x_1^{3}x_2^{8}x_3^{3}x_4^{19}x_5^{23}\\
& + x_1^{4}x_2^{3}x_3^{3}x_4^{11}x_5^{35} + x_1^{4}x_2^{3}x_3^{3}x_4^{15}x_5^{31} + x_1^{4}x_2^{3}x_3^{3}x_4^{23}x_5^{23} + x_1^{8}x_2^{3}x_3^{3}x_4^{19}x_5^{23}\big) \\
&+ Sq^{16}\big(x_1^{2}x_2^{5}x_3^{5}x_4^{13}x_5^{23} + x_1^{3}x_2^{3}x_3^{4}x_4^{15}x_5^{23} + x_1^{3}x_2^{4}x_3^{3}x_4^{15}x_5^{23} + x_1^{3}x_2^{4}x_3^{5}x_4^{13}x_5^{23}\\
& + x_1^{3}x_2^{5}x_3^{4}x_4^{13}x_5^{23} + x_1^{3}x_2^{5}x_3^{5}x_4^{12}x_5^{23} + x_1^{4}x_2^{3}x_3^{3}x_4^{15}x_5^{23}\big) \ \mbox{mod}\big(P_5^-((4)|(2)|^{4}\big).
\end{align*}
Hence, the monomials $x_1^{3}x_2^{5}x_3^{24}x_4x_5^{31}$, $x_1^{3}x_2^{5}x_3^{8}x_4^{17}x_5^{31}$, $x_1^{3}x_2^{5}x_3^{9}x_4^{16}x_5^{31}$ are strictly inadmissible.

We prove Part ii) for $x = x_1^{3}x_2^{7}x_3^{8}x_4^{17}x_5^{29}$, $y = x_1^{3}x_2^{5}x_3^{10}x_4^{17}x_5^{29}$, $z=x_1^{3}x_2^{5}x_3^{9}x_4^{19}x_5^{28} $ and $t = x_1^{7}x_2^{3}x_3^{12}x_4^{17}x_5^{25}$. The others can be proved by a similar computation. By using the Cartan formula we have
\begin{align*}
x &= x_1^{2}x_2^{11}x_3x_4^{21}x_5^{29} + x_1^{2}x_2^{13}x_3x_4^{19}x_5^{29} + x_1^{3}x_2^{4}x_3x_4^{27}x_5^{29} + x_1^{3}x_2^{5}x_3x_4^{25}x_5^{30}\\ 
&+ x_1^{3}x_2^{5}x_3^{2}x_4^{25}x_5^{29} + x_1^{3}x_2^{5}x_3^{8}x_4^{19}x_5^{29} + x_1^{3}x_2^{7}x_3x_4^{24}x_5^{29} +  Sq^1\big(x_1^{3}x_2^{11}x_3x_4^{19}x_5^{29}\big)\\ 
&+  Sq^2\big(x_1^{2}x_2^{11}x_3x_4^{19}x_5^{29} + x_1^{5}x_2^{7}x_3x_4^{19}x_5^{30} + x_1^{5}x_2^{7}x_3^{2}x_4^{19}x_5^{29}\big) +  Sq^4\big(x_1^{3}x_2^{4}x_3x_4^{23}x_5^{29}\\ 
&+ x_1^{3}x_2^{5}x_3x_4^{21}x_5^{30} + x_1^{3}x_2^{5}x_3^{2}x_4^{21}x_5^{29} + x_1^{3}x_2^{5}x_3^{4}x_4^{19}x_5^{29} + x_1^{3}x_2^{7}x_3x_4^{19}x_5^{30}\\ 
&+ x_1^{3}x_2^{7}x_3x_4^{20}x_5^{29} + x_1^{3}x_2^{7}x_3^{2}x_4^{19}x_5^{29} + x_1^{3}x_2^{11}x_3^{4}x_4^{13}x_5^{29} + x_1^{3}x_2^{12}x_3x_4^{15}x_5^{29}\big)\\ 
&+  Sq^8\big(x_1^{3}x_2^{7}x_3x_4^{11}x_5^{34} + x_1^{3}x_2^{7}x_3^{2}x_4^{11}x_5^{33} + x_1^{3}x_2^{7}x_3^{4}x_4^{13}x_5^{29} + x_1^{3}x_2^{7}x_3^{4}x_4^{21}x_5^{21}\\ 
&+ x_1^{3}x_2^{8}x_3x_4^{15}x_5^{29}\big) +  Sq^{16}\big(x_1^{3}x_2^{7}x_3^{4}x_4^{13}x_5^{21}\big)\ \mbox{mod}\big(P_5^-((4)|(2)|^{4}\big),\\
y &= x_1^{2}x_2^{3}x_3^{9}x_4^{21}x_5^{29} + x_1^{2}x_2^{5}x_3^{9}x_4^{19}x_5^{29} + x_1^{3}x_2^{3}x_3^{8}x_4^{21}x_5^{29} + x_1^{3}x_2^{3}x_3^{9}x_4^{20}x_5^{29}\\ 
& + x_1^{3}x_2^{4}x_3^{9}x_4^{19}x_5^{29} + x_1^{3}x_2^{5}x_3^{8}x_4^{19}x_5^{29} + x_1^{3}x_2^{5}x_3^{9}x_4^{17}x_5^{30} +  Sq^1\big(x_1^{3}x_2^{3}x_3^{9}x_4^{19}x_5^{29}\big)\\ 
& +  Sq^2\big(x_1^{2}x_2^{3}x_3^{9}x_4^{19}x_5^{29} + x_1^{5}x_2^{3}x_3^{5}x_4^{19}x_5^{30}\big) +  Sq^4\big(x_1^{3}x_2^{3}x_3^{5}x_4^{19}x_5^{30} + x_1^{3}x_2^{3}x_3^{6}x_4^{19}x_5^{29}\\ 
& + x_1^{3}x_2^{9}x_3^{5}x_4^{13}x_5^{30} + x_1^{3}x_2^{9}x_3^{6}x_4^{13}x_5^{29}\big) +  Sq^8\big(x_1^{3}x_2^{3}x_3^{5}x_4^{11}x_5^{34} + x_1^{3}x_2^{3}x_3^{6}x_4^{11}x_5^{33}\\ 
& + x_1^{3}x_2^{5}x_3^{5}x_4^{13}x_5^{30} + x_1^{3}x_2^{5}x_3^{5}x_4^{21}x_5^{22} + x_1^{3}x_2^{5}x_3^{6}x_4^{13}x_5^{29} + x_1^{3}x_2^{5}x_3^{6}x_4^{21}x_5^{21}\big)\\ 
& +  Sq^{16}\big(x_1^{3}x_2^{5}x_3^{5}x_4^{13}x_5^{22} + x_1^{3}x_2^{5}x_3^{6}x_4^{13}x_5^{21}\big)\ \mbox{mod}\big(P_5^-((4)|(2)|^{4}\big),\\
z &= x_1x_2^{3}x_3^{3}x_4^{28}x_5^{29} + x_1x_2^{3}x_3^{3}x_4^{29}x_5^{28} + x_1x_2^{3}x_3^{4}x_4^{27}x_5^{29} + x_1x_2^{3}x_3^{4}x_4^{29}x_5^{27}\\ 
&  + x_1x_2^{3}x_3^{5}x_4^{25}x_5^{30} + x_1x_2^{3}x_3^{5}x_4^{30}x_5^{25} + x_1x_2^{4}x_3^{3}x_4^{27}x_5^{29} + x_1x_2^{4}x_3^{3}x_4^{29}x_5^{27}\\ 
&  + x_1^{2}x_2x_3^{5}x_4^{27}x_5^{29} + x_1^{2}x_2x_3^{5}x_4^{29}x_5^{27} + x_1^{2}x_2^{3}x_3^{9}x_4^{21}x_5^{29} + x_1^{2}x_2^{5}x_3x_4^{27}x_5^{29}\\ 
&  + x_1^{2}x_2^{5}x_3x_4^{29}x_5^{27} + x_1^{2}x_2^{5}x_3^{3}x_4^{25}x_5^{29} + x_1^{2}x_2^{5}x_3^{5}x_4^{27}x_5^{25} + x_1^{2}x_2^{5}x_3^{9}x_4^{19}x_5^{29}\\ 
&  + x_1^{3}x_2x_3^{3}x_4^{28}x_5^{29} + x_1^{3}x_2x_3^{3}x_4^{29}x_5^{28} + x_1^{3}x_2x_3^{4}x_4^{27}x_5^{29} + x_1^{3}x_2x_3^{4}x_4^{29}x_5^{27}\\ 
&  + x_1^{3}x_2x_3^{5}x_4^{25}x_5^{30} + x_1^{3}x_2x_3^{5}x_4^{30}x_5^{25} + x_1^{3}x_2^{2}x_3^{5}x_4^{25}x_5^{29} + x_1^{3}x_2^{2}x_3^{5}x_4^{29}x_5^{25}\\ 
&  + x_1^{3}x_2^{3}x_3x_4^{28}x_5^{29} + x_1^{3}x_2^{3}x_3x_4^{29}x_5^{28} + x_1^{3}x_2^{3}x_3^{4}x_4^{25}x_5^{29} + x_1^{3}x_2^{3}x_3^{5}x_4^{24}x_5^{29}\\ 
&  + x_1^{3}x_2^{3}x_3^{5}x_4^{28}x_5^{25} + x_1^{3}x_2^{3}x_3^{9}x_4^{20}x_5^{29} + x_1^{3}x_2^{4}x_3x_4^{27}x_5^{29} + x_1^{3}x_2^{4}x_3x_4^{29}x_5^{27}\\ 
&  + x_1^{3}x_2^{4}x_3^{3}x_4^{25}x_5^{29} + x_1^{3}x_2^{4}x_3^{5}x_4^{27}x_5^{25} + x_1^{3}x_2^{4}x_3^{9}x_4^{19}x_5^{29} + x_1^{3}x_2^{5}x_3x_4^{25}x_5^{30}\\ 
&  + x_1^{3}x_2^{5}x_3x_4^{30}x_5^{25} + x_1^{3}x_2^{5}x_3^{2}x_4^{25}x_5^{29} + x_1^{3}x_2^{5}x_3^{2}x_4^{29}x_5^{25} + x_1^{3}x_2^{5}x_3^{3}x_4^{25}x_5^{28}\\ 
&  + x_1^{3}x_2^{5}x_3^{4}x_4^{25}x_5^{27} + x_1^{3}x_2^{5}x_3^{4}x_4^{27}x_5^{25} + x_1^{3}x_2^{5}x_3^{6}x_4^{25}x_5^{25} + x_1^{3}x_2^{5}x_3^{8}x_4^{19}x_5^{29}\\ 
&  + x_1^{3}x_2^{5}x_3^{8}x_4^{21}x_5^{27} + x_1^{3}x_2^{5}x_3^{8}x_4^{25}x_5^{23} +  
Sq^1\big(x_1x_2^{3}x_3^{3}x_4^{23}x_5^{33} + x_1x_2^{3}x_3^{3}x_4^{27}x_5^{29}\\ 
& + x_1x_2^{3}x_3^{3}x_4^{29}x_5^{27} + x_1x_2^{3}x_3^{3}x_4^{33}x_5^{23} + x_1^{3}x_2x_3^{3}x_4^{23}x_5^{33} + x_1^{3}x_2x_3^{3}x_4^{27}x_5^{29}\\ 
& + x_1^{3}x_2x_3^{3}x_4^{29}x_5^{27} + x_1^{3}x_2x_3^{3}x_4^{33}x_5^{23} + x_1^{3}x_2^{3}x_3x_4^{23}x_5^{33} + x_1^{3}x_2^{3}x_3x_4^{27}x_5^{29}\\ 
& + x_1^{3}x_2^{3}x_3x_4^{29}x_5^{27} + x_1^{3}x_2^{3}x_3x_4^{33}x_5^{23} + x_1^{3}x_2^{3}x_3^{3}x_4^{25}x_5^{29} + x_1^{3}x_2^{3}x_3^{3}x_4^{27}x_5^{27}\\ 
& + x_1^{3}x_2^{3}x_3^{5}x_4^{19}x_5^{33} + x_1^{3}x_2^{3}x_3^{5}x_4^{25}x_5^{27} + x_1^{3}x_2^{3}x_3^{9}x_4^{19}x_5^{29}
\big) + Sq^2\big(x_1x_2^{5}x_3^{3}x_4^{23}x_5^{30}\\ 
& + x_1x_2^{5}x_3^{3}x_4^{30}x_5^{23} + x_1^{2}x_2x_3^{3}x_4^{27}x_5^{29} + x_1^{2}x_2x_3^{3}x_4^{29}x_5^{27} + x_1^{2}x_2^{3}x_3x_4^{27}x_5^{29}\\ 
& + x_1^{2}x_2^{3}x_3x_4^{29}x_5^{27} + x_1^{2}x_2^{3}x_3^{3}x_4^{29}x_5^{25} + x_1^{2}x_2^{3}x_3^{5}x_4^{23}x_5^{29} + x_1^{2}x_2^{3}x_3^{5}x_4^{27}x_5^{25}\\ 
& + x_1^{2}x_2^{3}x_3^{5}x_4^{29}x_5^{23} + x_1^{2}x_2^{3}x_3^{9}x_4^{19}x_5^{29} + x_1^{3}x_2^{3}x_3^{3}x_4^{24}x_5^{29} + x_1^{3}x_2^{3}x_3^{3}x_4^{26}x_5^{27}\\ 
& + x_1^{3}x_2^{3}x_3^{3}x_4^{28}x_5^{25} + x_1^{3}x_2^{3}x_3^{5}x_4^{22}x_5^{29} + x_1^{3}x_2^{3}x_3^{5}x_4^{24}x_5^{27} + x_1^{3}x_2^{3}x_3^{5}x_4^{26}x_5^{25}\\ 
& + x_1^{3}x_2^{3}x_3^{5}x_4^{28}x_5^{23} + x_1^{4}x_2^{3}x_3^{3}x_4^{23}x_5^{29} + x_1^{4}x_2^{3}x_3^{3}x_4^{29}x_5^{23} + x_1^{4}x_2^{5}x_3^{3}x_4^{23}x_5^{27}\\ 
& + x_1^{4}x_2^{5}x_3^{3}x_4^{27}x_5^{23} + x_1^{5}x_2x_3^{3}x_4^{23}x_5^{30} + x_1^{5}x_2x_3^{3}x_4^{30}x_5^{23} + x_1^{5}x_2^{2}x_3^{3}x_4^{23}x_5^{29}\\ 
& + x_1^{5}x_2^{2}x_3^{3}x_4^{29}x_5^{23} + x_1^{5}x_2^{3}x_3x_4^{23}x_5^{30} + x_1^{5}x_2^{3}x_3x_4^{30}x_5^{23} + x_1^{5}x_2^{3}x_3^{2}x_4^{23}x_5^{29}\\ 
& + x_1^{5}x_2^{3}x_3^{2}x_4^{29}x_5^{23} + x_1^{5}x_2^{3}x_3^{3}x_4^{21}x_5^{30} + x_1^{5}x_2^{3}x_3^{3}x_4^{23}x_5^{28} + x_1^{5}x_2^{3}x_3^{4}x_4^{23}x_5^{27}\\ 
& + x_1^{5}x_2^{3}x_3^{4}x_4^{27}x_5^{23} + x_1^{5}x_2^{3}x_3^{5}x_4^{19}x_5^{30} + x_1^{5}x_2^{3}x_3^{6}x_4^{19}x_5^{29} + x_1^{5}x_2^{3}x_3^{6}x_4^{21}x_5^{27}\\ 
& + x_1^{5}x_2^{3}x_3^{6}x_4^{25}x_5^{23} + x_1^{5}x_2^{4}x_3^{3}x_4^{23}x_5^{27} + x_1^{5}x_2^{4}x_3^{3}x_4^{27}x_5^{23}  
\big) + Sq^4\big(x_1x_2^{3}x_3^{3}x_4^{23}x_5^{30}\\ 
&  + x_1x_2^{3}x_3^{3}x_4^{30}x_5^{23} + x_1^{2}x_2^{3}x_3^{3}x_4^{23}x_5^{29} + x_1^{2}x_2^{3}x_3^{3}x_4^{29}x_5^{23} + x_1^{3}x_2x_3^{3}x_4^{23}x_5^{30}\\ 
&  + x_1^{3}x_2x_3^{3}x_4^{30}x_5^{23} + x_1^{3}x_2^{2}x_3^{3}x_4^{23}x_5^{29} + x_1^{3}x_2^{2}x_3^{3}x_4^{29}x_5^{23} + x_1^{3}x_2^{3}x_3x_4^{23}x_5^{30}\\ 
&  + x_1^{3}x_2^{3}x_3x_4^{30}x_5^{23} + x_1^{3}x_2^{3}x_3^{2}x_4^{23}x_5^{29} + x_1^{3}x_2^{3}x_3^{2}x_4^{29}x_5^{23} + x_1^{3}x_2^{3}x_3^{3}x_4^{21}x_5^{30}\\ 
&  + x_1^{3}x_2^{3}x_3^{3}x_4^{23}x_5^{28} + x_1^{3}x_2^{3}x_3^{5}x_4^{19}x_5^{30} + x_1^{3}x_2^{3}x_3^{6}x_4^{19}x_5^{29} + x_1^{3}x_2^{3}x_3^{8}x_4^{23}x_5^{23}\\ 
&  + x_1^{3}x_2^{3}x_3^{10}x_4^{21}x_5^{23} + x_1^{3}x_2^{8}x_3^{3}x_4^{23}x_5^{23} + x_1^{3}x_2^{9}x_3^{3}x_4^{22}x_5^{23} + x_1^{3}x_2^{9}x_3^{5}x_4^{15}x_5^{28}\\ 
&  + x_1^{8}x_2^{3}x_3^{3}x_4^{23}x_5^{23} + x_1^{9}x_2^{3}x_3^{3}x_4^{22}x_5^{23}\big) + Sq^8\big(x_1^{3}x_2^{3}x_3^{3}x_4^{13}x_5^{34} + x_1^{3}x_2^{3}x_3^{3}x_4^{15}x_5^{32}\\ 
& + x_1^{3}x_2^{3}x_3^{4}x_4^{23}x_5^{23} + x_1^{3}x_2^{3}x_3^{6}x_4^{21}x_5^{23} + x_1^{3}x_2^{4}x_3^{3}x_4^{23}x_5^{23} + x_1^{3}x_2^{5}x_3^{3}x_4^{22}x_5^{23}\\ 
& + x_1^{3}x_2^{5}x_3^{5}x_4^{15}x_5^{28} + x_1^{3}x_2^{5}x_3^{5}x_4^{23}x_5^{20} + x_1^{4}x_2^{3}x_3^{3}x_4^{23}x_5^{23} + x_1^{5}x_2^{3}x_3^{3}x_4^{22}x_5^{23}\big)\\ 
& + Sq^{16}\big(x_1^{3}x_2^{5}x_3^{5}x_4^{15}x_5^{20}\big)\ \mbox{mod}\big(P_5^-((4)|(2)|^{4}\big),\\
t &= x_1^{4}x_2^{3}x_3^{3}x_4^{25}x_5^{29} + x_1^{5}x_2x_3^{3}x_4^{25}x_5^{30} + x_1^{5}x_2x_3^{3}x_4^{26}x_5^{29} + x_1^{5}x_2x_3^{10}x_4^{19}x_5^{29}\\ 
& + x_1^{5}x_2^{2}x_3^{3}x_4^{25}x_5^{29} + x_1^{5}x_2^{3}x_3^{3}x_4^{24}x_5^{29} + x_1^{5}x_2^{3}x_3^{5}x_4^{24}x_5^{27} + x_1^{5}x_2^{3}x_3^{9}x_4^{17}x_5^{30}\\ 
& + x_1^{5}x_2^{3}x_3^{9}x_4^{24}x_5^{23} + x_1^{5}x_2^{3}x_3^{10}x_4^{17}x_5^{29} + x_1^{5}x_2^{3}x_3^{12}x_4^{17}x_5^{27} + x_1^{7}x_2x_3^{3}x_4^{24}x_5^{29}\\ 
& + x_1^{7}x_2x_3^{8}x_4^{19}x_5^{29} + x_1^{7}x_2^{2}x_3^{9}x_4^{17}x_5^{29} + x_1^{7}x_2^{3}x_3^{9}x_4^{17}x_5^{28} + x_1^{7}x_2^{3}x_3^{9}x_4^{20}x_5^{25}\\ 
& +  Sq^1\big(x_1^{7}x_2x_3^{5}x_4^{21}x_5^{29} + x_1^{7}x_2^{3}x_3^{9}x_4^{17}x_5^{27} + x_1^{11}x_2^{3}x_3^{3}x_4^{17}x_5^{29}\big)\\ 
& + Sq^2\big(x_1^{7}x_2x_3^{3}x_4^{21}x_5^{30} + x_1^{7}x_2x_3^{3}x_4^{22}x_5^{29} + x_1^{7}x_2x_3^{6}x_4^{19}x_5^{29} + x_1^{7}x_2^{2}x_3^{3}x_4^{21}x_5^{29}\\ 
& + x_1^{7}x_2^{2}x_3^{9}x_4^{17}x_5^{27} + x_1^{7}x_2^{5}x_3^{3}x_4^{17}x_5^{30} + x_1^{7}x_2^{5}x_3^{3}x_4^{18}x_5^{29} + x_1^{7}x_2^{5}x_3^{5}x_4^{18}x_5^{27}\\ 
& + x_1^{7}x_2^{5}x_3^{6}x_4^{17}x_5^{27} + x_1^{7}x_2^{5}x_3^{9}x_4^{18}x_5^{23} + x_1^{7}x_2^{5}x_3^{10}x_4^{17}x_5^{23}\big) + Sq^4\big(x_1^{4}x_2^{3}x_3^{3}x_4^{21}x_5^{29}\\ 
& + x_1^{5}x_2x_3^{3}x_4^{21}x_5^{30} + x_1^{5}x_2x_3^{3}x_4^{22}x_5^{29} + x_1^{5}x_2x_3^{6}x_4^{19}x_5^{29} + x_1^{5}x_2^{2}x_3^{3}x_4^{21}x_5^{29}\\ 
& + x_1^{5}x_2^{3}x_3^{3}x_4^{20}x_5^{29} + x_1^{5}x_2^{3}x_3^{5}x_4^{17}x_5^{30} + x_1^{5}x_2^{3}x_3^{5}x_4^{20}x_5^{27} + x_1^{5}x_2^{3}x_3^{6}x_4^{17}x_5^{29}\\ 
& + x_1^{5}x_2^{3}x_3^{9}x_4^{20}x_5^{23} + x_1^{5}x_2^{3}x_3^{12}x_4^{17}x_5^{23} + x_1^{7}x_2^{3}x_3^{3}x_4^{17}x_5^{30} + x_1^{7}x_2^{3}x_3^{3}x_4^{18}x_5^{29}\\ 
& + x_1^{7}x_2^{3}x_3^{4}x_4^{17}x_5^{29} + x_1^{7}x_2^{8}x_3^{3}x_4^{13}x_5^{29} + x_1^{11}x_2^{3}x_3^{5}x_4^{18}x_5^{23} + x_1^{11}x_2^{3}x_3^{6}x_4^{17}x_5^{23}\\ 
& + x_1^{12}x_2^{3}x_3^{3}x_4^{13}x_5^{29}\big) +  Sq^8\big)x_1^{7}x_2^{3}x_3^{3}x_4^{9}x_5^{34} + x_1^{7}x_2^{3}x_3^{3}x_4^{10}x_5^{33} + x_1^{7}x_2^{3}x_3^{5}x_4^{18}x_5^{23}\\ 
& + x_1^{7}x_2^{3}x_3^{6}x_4^{17}x_5^{23} + x_1^{7}x_2^{4}x_3^{3}x_4^{13}x_5^{29} + x_1^{7}x_2^{4}x_3^{3}x_4^{21}x_5^{21} + x_1^{8}x_2^{3}x_3^{3}x_4^{13}x_5^{29}\\ 
& +  Sq^{16}\big(x_1^{7}x_2^{4}x_3^{3}x_4^{13}x_5^{21}\big)\ \mbox{mod}\big(P_5^-((4)|(2)|^{4}\big).
\end{align*}
The above equalities show that the monomials $x,\, y,\, z,\, t$ are strictly inadmissible.
\end{proof}

\begin{proof}[Proof of Theorem $\ref{mdd52}$] Let $A(d)$ and $B(d)$ be as in Subsection \ref{s51} with $d \geqslant 6$ and let $x \in P_5^+(\omega)$ be an admissible monomial with $\omega := (4)|(2)|^{d-2}$, then $x = X_iy^2$ with $1\leqslant i \leqslant 5$ and $y$ a monomial of weight vector $(2)|^{d-2}$. Since $x$ is admissible, by Theorem \ref{dlcb1}, $y$ is admissible. By a direct computation we see that if $\bar y$ is an admissible monomial of weight vector $(2)|^{d-2}$ and  $X_i\bar y^2 \notin A(d)\cup B(d)$, then there is a monomial $w$ as given in one of Lemmas \ref{bda62} or \ref{bda63} such that $X_i\bar y^2 = wz^{2^r}$ with $r$ an integer, $2\leqslant r \leqslant 6$, and $z$ a monomial of weight vector $(2)|^{d-r}$. By Theorem \ref{dlcb1}, $X_i\bar y^2$ is inadmissible. Since $x = X_iy^2$ with $y$ an admissible monomial of weight vector $(2)|^{d-2}$, we get $x\in A(d)\cup B(d)$ and $B_5^+(\omega) \subset A(d)\cup B(d)$.
	
Now we prove that the set $[A(d)\cup B(d)]_{\omega}$ is linearly independent in $QP_5(\omega)$.

Consider the subspaces $\langle [A(d)]_{\omega}\rangle \subset QP_5(\omega)$ and $\langle [B(d)]_{\omega}\rangle \subset QP_5(\omega)$. It is easy to see that for any $x\in A(d)$, we have $x = \theta_{J_i}(\bar x)$ with $\bar x$ an admissible monomial of weight vector $(4)|(2)|^{d-2}$ in $P_4$. By Proposition \ref{mdmo}, $x$ is admissible. This implies $\dim \langle [A(d)]_{\omega}\rangle = 85$. Since $\nu(x) = 2^{d-1}-1$ for all $x\in A(d)$ and $\nu(x) < 2^{d-1}-1$ for all $x\in B(d)$, we obtain $\langle [A(d)]_{\omega}\rangle \cap \langle [B(d)]_{\omega}\rangle = \{0\}$. Hence, we need only to prove the set $[B(d)]_{\omega}=\{[b_{d,t}]_{\omega}: 1 \leqslant t \leqslant 205 \}$ is linearly independent in $QP_5(\omega)$, where the monomials $b_{d,t}: 1 \leqslant t \leqslant 205$, are determined as in Subsection \ref{s51}. 

Suppose there is a linear relation
\begin{equation}\label{ctd612}
\mathcal S:= \sum_{1\leqslant t \leqslant 205}\gamma_tb_{d,t} \equiv_{\omega} 0,
\end{equation}
where $\gamma_t \in \mathbb F_2$. We denote $\gamma_{\mathbb J} = \sum_{t \in \mathbb J}\gamma_t$ for any $\mathbb J \subset \{t\in \mathbb N:1\leqslant t \leqslant 75\}$.
	
Let $v_u = v_{d,u},\, 1\leqslant u \leqslant 35$, be as in Subsection \ref{s51} and the homomorphism $p_{(i;I)}:P_5\to P_4$ which is defined by \eqref{ct23} for $k=5$. From Lemma \ref{bdm}, we see that $p_{(i;I)}$ passes to a homomorphism from $QP_5(\omega)$ to $QP_4(\omega)$. By applying $p_{(1;j)}$, $2\leqslant j \leqslant 5,$ to (\ref{ctd612}), we obtain
\begin{align*}
&p_{(1;2)}(S) \equiv_{\omega} \gamma_{7}v_{10} +  \gamma_{8}v_{12} +   \gamma_{9}v_{14} +  \gamma_{10}v_{15} +  \gamma_{11}v_{17} +  \gamma_{\{36,107\}}v_{22}\\
&\qquad +  \gamma_{\{37,108\}}v_{23} +  \gamma_{\{38,109\}}v_{24} +  \gamma_{\{39,110\}}v_{25} +  \gamma_{\{40,111\}}v_{26}\\
&\qquad +  \gamma_{\{41,112\}}v_{27} +  \gamma_{\{42,113\}}v_{28} +  \gamma_{61}v_{30} +  \gamma_{62}v_{32} +  \gamma_{63}v_{33} +  \gamma_{64}v_{34} \equiv_{\omega} 0,\\
&p_{(1;3)}(S) \equiv_{\omega} \gamma_{\{3,70\}}v_{4} +  \gamma_{\{6,81\}}v_{6} +   \gamma_{\{14,97,188\}}v_{10} +  \gamma_{17}v_{12} +  \gamma_{\{22,91,97,188\}}v_{14}\\
&\qquad +  \gamma_{\{23,92,97,188\}}v_{15} +  \gamma_{28}v_{17} +  \gamma_{\{33,100\}}v_{19} +  \gamma_{\{34,101\}}v_{20} +  \gamma_{\{35,102\}}v_{21}\\
&\qquad +  \gamma_{45}v_{23} +  \gamma_{46}v_{24} +  \gamma_{\{51,143\}}v_{25} +  \gamma_{54}v_{26} +  \gamma_{55}v_{27} +  \gamma_{56}v_{28} \equiv_{\omega} 0,\\
&p_{(1;4)}(S) \equiv_{\omega} \gamma_{\{2,68,79,168\}}v_{3} +   \gamma_{\{13,89,139,182\}}v_{9} +  \gamma_{\{15,98,104,120,127,134\}}v_{10}\\
&\qquad +  \gamma_{\{4,77\}}v_{4} +  \gamma_{\{19,93,98,120,127,144\}}v_{12} +  \gamma_{\{21,95,122,186\}}v_{13} +  \gamma_{24}v_{14}\\
&\qquad +  \gamma_{\{26,98,104,124,127,134\}}v_{15} +  \gamma_{\{30,104,134,148,160\}}v_{17} +  \gamma_{\{32,106,136,150,162\}}v_{18}\\
&\qquad +  \gamma_{\{43,137\}}v_{22} +  \gamma_{47}v_{23} +  \gamma_{\{49,141,160\}}v_{24} +  \gamma_{52}v_{25} +  \gamma_{58}v_{27} +  \gamma_{60}v_{28} \equiv_{\omega} 0,\\
&p_{(1;5)}(S) \equiv_{\omega} \gamma_{\{1,69,73,78,84,171,174,180\}}v_{2} +  \gamma_{\{5,80\}}v_{4}+  \gamma_{\{16,96,145,146,149,161\}}v_{10} \\
&\qquad  +   \gamma_{\{12,90,116,138,154,185,198,203\}}v_{8}+  \gamma_{\{18,94,121,187,190,191,194,199\}}v_{11}\\
&\qquad +  \gamma_{\{20,96,99,123,128,145\}}v_{12} +  \gamma_{\{25,99,105,125,146\}}v_{14} +  \gamma_{27}v_{15}\\
&\qquad +  \gamma_{\{29,103,133,147,159,196,200,205\}}v_{16} +  \gamma_{\{31,105,135,149,161\}}v_{17} +  \gamma_{\{44,140\}}v_{22}\\
&\qquad +  \gamma_{\{48,142,145,146,149,161\}}v_{23} +  \gamma_{50}v_{24} +  \gamma_{53}v_{25} +  \gamma_{\{57,151\}}v_{26} +  \gamma_{59}v_{27}  \equiv_{\omega} 0.
\end{align*}
From the above equalities, we get
\begin{align}\label{c611}
\gamma_j = 0,\ j \in \mathbb J_1 \mbox{ and } \gamma_s = \gamma_t,\ (s,t) \in \mathbb K_1,
\end{align}
where $\mathbb J_1 = \{$7, 8, 9, 10, 11, 17, 24, 27, 28, 45, 46, 47, 50, 52, 53, 54, 55, 56, 58, 59, 60, 61, 62, 63, 64$\}$ and $\mathbb K_1 = \{$(3,70), (4,77), (5,80), (6,81), (33,100), (34,101), (35,102), (36,107), (37,108), (38,109), (39,110), (40,111), (41,112), (42,113), (43,137), (44,140), (51,143), (57,151)$\}$.

By applying the homomorphism $p_{(2;j)}$, $3\leqslant j \leqslant 5,$ to \eqref{ctd612} and using \eqref{c611}, we obtain
\begin{align*}
&p_{(2;3)}(S) \equiv_{\omega} \gamma_{\{3,36\}}v_{4} +  \gamma_{6}v_{6} +   \gamma_{\{39,51,67,97,126\}}v_{10} +  \gamma_{\{3,36\}}v_{12} +  \gamma_{166}v_{23} \\
&\qquad +  \gamma_{\{37,39,51,75,91,97,118,126\}}v_{14} +  \gamma_{\{38,39,51,76,92,97,119,126\}}v_{15} +  \gamma_{6}v_{17}\\
&\qquad +  \gamma_{\{33,40,86,130,156\}}v_{19} +  \gamma_{\{34,41,87,131,157\}}v_{20} +  \gamma_{\{35,42,88,132,158\}}v_{21}\\
&\qquad +  \gamma_{167}v_{24} +  \gamma_{\{172,188\}}v_{25} +  \gamma_{175}v_{26} +  \gamma_{176}v_{27} +  \gamma_{177}v_{28} \equiv_{\omega} 0,\\
&p_{(2;4)}(S) \equiv_{\omega} \gamma_{\{2,15,26,37\}}v_{3} +  \gamma_{\{4,40\}}v_{4} +   \gamma_{\{36,43,66,89,115,153,184,197,202\}}v_{9}\\
&\qquad +  \gamma_{\{37,68,98,104,120\}}v_{10} +  \gamma_{\{72,93,98,189\}}v_{12} +  \gamma_{\{74,95,141\}}v_{13} +  \gamma_{\{4,40\}}v_{14}\\
&\qquad +  \gamma_{\{79,98,104,124\}}v_{15} +  \gamma_{\{83,104,193\}}v_{17} +  \gamma_{\{85,106,195\}}v_{18} +  \gamma_{\{164,182\}}v_{22}\\
&\qquad +  \gamma_{168}v_{23} +  \gamma_{\{170,186\}}v_{24} +  \gamma_{173}v_{25} +  \gamma_{179}v_{27} +  \gamma_{181}v_{28}  \equiv_{\omega} 0,\\
&p_{(2;5)}(S) \equiv_{\omega} \gamma_{\{1,16,20,25,31,38,39,41\}}v_{2} +  \gamma_{\{5,42\}}v_{4} +   \gamma_{\{44,65,90,183\}}v_{8}\\
&\qquad +  \gamma_{\{38,69,96,123,190,191,194\}}v_{10} +  \gamma_{\{71,94,142,145,146,149\}}v_{11}\\
&\qquad +  \gamma_{\{39,41,78,99,105,125,128,135,191\}}v_{14} +  \gamma_{\{5,42\}}v_{15} +  \gamma_{\{57,82,103,192\}}v_{16}\\
&\qquad +  \gamma_{\{84,105,194\}}v_{17} +  \gamma_{\{165,185\}}v_{22} +  \gamma_{\{169,187,190,191,194\}}v_{23} +  \gamma_{171}v_{24}\\
&\qquad  +  \gamma_{\{73,96,99,190\}}v_{12} +  \gamma_{174}v_{25} +  \gamma_{\{178,196\}}v_{26} +  \gamma_{180}v_{27}  \equiv_{\omega} 0.
\end{align*}
These equalities implies
\begin{align}\label{c612}
\gamma_j = 0,\ j \in \mathbb J_2 \mbox{ and } \gamma_s = \gamma_t,\ (s,t) \in \mathbb K_2,
\end{align}
where $\mathbb J_2 = \{$6, 81, 166, 167, 168, 171, 173, 174, 175, 176, 177, 179, 180, 181$\}$ and $\mathbb K_2 = \{$(3,36), (4,40), (5,42), (164,182), (165,185), (170,186), (172,188), (178,196)$\}$.

By applying the homomorphism $p_{(i;j)}$, $3\leqslant i < j \leqslant 5,$ to \eqref{ctd612} and using \eqref{c611}, \eqref{c612}, we have
\begin{align*}
&p_{(3;4)}(S) \equiv_{\omega} \gamma_{\{13,14,15,19,22,30,33,49,51\}}v_{3} +   \gamma_{\{3,4,66,67,68,72,75,83,86,170,172\}}v_{9}\\
&\qquad +  \gamma_{43}v_{4} +  \gamma_{\{33,89,91,93,157,160\}}v_{10} +  \gamma_{\{117,119,122,124,126,127,131,134\}}v_{13}\\
&\qquad +  \gamma_{\{115,118,120,130\}}v_{12} +  \gamma_{43}v_{14} +  \gamma_{\{51,139,141,144,148,157,160\}}v_{15}\\
&\qquad +  \gamma_{\{153,156,157,160\}}v_{17} +  \gamma_{\{155,158,162,163\}}v_{18} +  \gamma_{164}v_{22} +  \gamma_{164}v_{23}\\
&\qquad +  \gamma_{\{170,172,184,189,193\}}v_{24} +  \gamma_{197}v_{25} +  \gamma_{202}v_{27} +  \gamma_{204}v_{28}  \equiv_{\omega} 0,\\
&p_{(3;5)}(S) \equiv_{\omega} \gamma_{\{12,14,16,23,35,48\}}v_{2} +  \gamma_{44}v_{4} +   \gamma_{\{5,65,67,69,76,88,169\}}v_{8}\\
&\qquad +  \gamma_{\{35,90,92,96,119,123,132,158,199\}}v_{10} +  \gamma_{\{114,118,121,125\}}v_{11}\\
&\qquad +  \gamma_{\{116,119,123,132\}}v_{12} +  \gamma_{\{138,142,157,161,199\}}v_{14} +  \gamma_{44}v_{15}\\
&\qquad +  \gamma_{\{152,156,159,163\}}v_{16} +  \gamma_{\{154,157,158,161\}}v_{17} +  \gamma_{165}v_{22}\\
&\qquad +  \gamma_{\{183,187,199\}}v_{23} +  \gamma_{165}v_{24} +  \gamma_{198}v_{25} +  \gamma_{\{201,205\}}v_{26} +  \gamma_{203}v_{27} \equiv_{\omega} 0,\\
&p_{(4;5)}(S) \equiv_{\omega} \gamma_{\{29,30,31,32\}}v_{2} +  \gamma_{57}v_{4} +   \gamma_{\{82,83,84,85\}}v_{8} +  \gamma_{\{103,104,105,106\}}v_{10}\\
&\qquad +  \gamma_{129}v_{11} +  \gamma_{\{133,134,135,136\}}v_{12} +  \gamma_{\{147,148,149,150\}}v_{14} +  \gamma_{57}v_{15}\\
&\qquad +  \gamma_{\{152,153,154,155\}}v_{16} +  \gamma_{\{159,160,161,162\}}v_{17} +  \gamma_{\{192,193,194,195\}}v_{23}\\
&\qquad +  \gamma_{178}v_{22} +  \gamma_{178}v_{24} +  \gamma_{200}v_{25} +  \gamma_{\{201,202,203,204\}}v_{26} +  \gamma_{205}v_{27} \equiv_{\omega} 0.
\end{align*}
From the above equalities, we obtain
\begin{align}\label{c613}
\gamma_j = 0 \mbox{ for } j \in \{43, &44, 57, 129, 137, 140, 151, 164, 165, 178,\notag\\ &182, 185, 196, 197, 198, 200, 201, 202, 203, 204, 205\}.
\end{align}
By applying the homomorphism $p_{(1;(u,v))}$, $2\leqslant u < v \leqslant 4,$ to \eqref{ctd612} and using \eqref{c611}, \eqref{c612}, \eqref{c613}, we obtain
\begin{align*}
&p_{(1;(2,3))}(S) \equiv_{\omega} \gamma_{\{3,14,67,97\}}v_{10} +  \gamma_{\{22,75,91,97\}}v_{14} +   \gamma_{\{23,76,92,97\}}v_{15} +  \gamma_{86}v_{19}\\
&\qquad +  \gamma_{87}v_{20} +  \gamma_{88}v_{21} +  \gamma_{118}v_{23} +  \gamma_{119}v_{24} +  \gamma_{126}v_{25} +  \gamma_{130}v_{26} +  \gamma_{131}v_{27}\\
&\qquad +  \gamma_{132}v_{28} +  \gamma_{156}v_{32} +  \gamma_{157}v_{33} +  \gamma_{158}v_{34} +  \gamma_{163}v_{35} \equiv_{\omega} 0,\\
&p_{(1;(2,4))}(S) \equiv_{\omega} \gamma_{\{2,68,79\}}v_{3} +   \gamma_{\{15,37,39,41,68,98,104,120,124,127,134\}}v_{10}\\
&\qquad +  \gamma_{\{13,66,89,117,139,184\}}v_{9} +  \gamma_{\{19,37,39,72,93,98,120,127,144,189\}}v_{12}\\
&\qquad +  \gamma_{\{21,38,74,95,122\}}v_{13} +  \gamma_{\{26,39,41,79,98,104,124,127,134\}}v_{15}\\
&\qquad +  \gamma_{\{4,30,41,83,104,134,148,160,193\}}v_{17} +  \gamma_{\{5,32,85,106,136,150,162,195\}}v_{18}\\
&\qquad +  \gamma_{115}v_{22} +  \gamma_{120}v_{23} +  \gamma_{\{49,122,141,160\}}v_{24} +  \gamma_{127}v_{25} +  \gamma_{134}v_{27}\\
&\qquad +  \gamma_{136}v_{28} +  \gamma_{153}v_{30} +  \gamma_{155}v_{31} +  \gamma_{160}v_{33} +  \gamma_{162}v_{34} \equiv_{\omega} 0,\\
&p_{(1;(3,4))}(S) \equiv_{\omega} \gamma_{\{2,66,67,68,74,76,79,170\}}v_{3} +  \gamma_{\{72,75,172\}}v_{4} +   \gamma_{\{83,86,87\}}v_{6}\\
&\qquad +  \gamma_{\{85,88\}}v_{7} +  \gamma_{\{14,15,34,89,104,115,118,120,126,127,131,134,141,153,156,193\}}v_{10}\\
&\qquad +  \gamma_{\{13,89,139\}}v_{9} +  \gamma_{\{19,51,91,93,97,98,115,118,120,126,127,144,172,189\}}v_{12}\\
&\qquad +  \gamma_{\{21,92,95,117,119,122,170,184\}}v_{13} +  \gamma_{\{22,91,93,97,98,130,172,189\}}v_{14}\\
&\qquad +  \gamma_{\{23,26,34,92,95,104,124,126,127,131,134,170,184,193\}}v_{15} +  \gamma_{\{104,193\}}v_{20}\\
&\qquad +  \gamma_{\{30,33,34,104,130,131,134,148,153,156,157,160,193\}}v_{17} +  \gamma_{\{106,163,195\}}v_{21}\\
&\qquad +  \gamma_{\{32,35,106,132,136,150,155,158,162,195\}}v_{18} +  \gamma_{\{49,139,141,157,160\}}v_{24}\\
&\qquad +  \gamma_{144}v_{25} +  \gamma_{148}v_{27} +  \gamma_{150}v_{28} \equiv_{\omega} 0.
\end{align*}
These equalities imply
\begin{equation}\label{c62}
\gamma_j = 0,\ j \in \mathbb J_3 \mbox{ and } \gamma_s = \gamma_t,\ (s,t) \in \mathbb K_3,
\end{equation}
where $\mathbb J_3 = \{$83, 85, 86, 87, 88, 115, 118, 119, 120, 122, 126, 127, 130, 131, 132, 134, 136, 139, 144, 148, 150, 153, 155, 156, 157, 158, 160, 162, 163$\}$ and $\mathbb K_3 = \{$(4,33), (5,32), (5,35), (5,106), (5,195), (13,89), (30,104), (30,193), (34,41), (49,51), (49,141), (82,84), (116,123), (117,124), (133,135), (147,149), (152,154), (152,159), (152,161)$\}$.

Applying the homomorphism $p_{(1;(u,5))}$, $2\leqslant u  \leqslant 4,$ to \eqref{ctd612} and using \eqref{c611}, \eqref{c612}, \eqref{c613}, \eqref{c62} gives
\begin{align*}
&p_{(1;(2,5))}(S) \equiv_{\omega} \gamma_{\{1,69,73,78,82\}}v_{2} +   \gamma_{\{5,16,69,96,125,145,146,147,152,190,191,194\}}v_{10} \\
&\qquad +  \gamma_{\{3,12,65,90,114,116,138,152,183\}}v_{8} +  \gamma_{\{18,37,71,94,121,169,187,190,191,194,199\}}v_{11}\\
&\qquad +  \gamma_{\{20,38,39,73,96,99,116,128,145,190\}}v_{12} +  \gamma_{\{25,78,99,105,125,146,191\}}v_{14}\\
&\qquad +  \gamma_{\{4,29,82,103,133,147,152,192\}}v_{16} +  \gamma_{\{5,31,34,82,105,133,147,152,194\}}v_{17} +  \gamma_{116}v_{22}\\
&\qquad +  \gamma_{\{48,121,142,145,146,147,152,169,187,190,191,194,199\}}v_{23} +  \gamma_{116}v_{24} +  \gamma_{128}v_{25}\\
&\qquad +  \gamma_{133}v_{26} +  \gamma_{133}v_{27} +  \gamma_{152}v_{29} +  \gamma_{152}v_{30} +  \gamma_{152}v_{32} +  \gamma_{152}v_{33} \equiv_{\omega} 0,\\
&p_{(1;(3,5))}(S) \equiv_{\omega} \gamma_{\{1,3,65,67,69,71,73,75,78,82,169,172\}}v_{2} +  \gamma_{\{73,76,172\}}v_{4} +   \gamma_{82}v_{5}\\
&\qquad +  \gamma_{82}v_{6} +  \gamma_{\{12,90,116,138,152\}}v_{8} +  \gamma_{\{18,91,94,114,121,172,183,187,190,191,194,199\}}v_{11}\\
&\qquad  +  \gamma_{\{5,14,16,49,90,92,96,97,99,116,142,145,146,147,172,190,199\}}v_{10}  +  \gamma_{\{103,192\}}v_{19}\\
&\qquad +  \gamma_{\{20,49,92,96,97,99,128,145,172,190\}}v_{12} +  \gamma_{\{23,92,96,97,99,172,190\}}v_{15}\\
&\qquad +  \gamma_{\{22,25,34,91,94,105,125,172,183,187,190,191,199\}}v_{14} +  \gamma_{\{4,29,103,133,147,192\}}v_{16}\\
&\qquad +  \gamma_{\{5,31,34,105,133,147,194\}}v_{17} +  \gamma_{\{48,49,138,142,145,146,147,152,199\}}v_{23}\\
&\qquad +  \gamma_{\{105,194\}}v_{20} +  \gamma_{145}v_{25} +  \gamma_{147}v_{26} +  \gamma_{147}v_{27} \equiv_{\omega} 0,\\
&p_{(1;(4,5))}(S) \equiv_{\omega}  \gamma_{\{1,4,68,69,71,72,73,78,169,170\}}v_{2} +   \gamma_{\{2,5,68,69,74,79,169,170\}}v_{3}\\
&\qquad + \gamma_{\{65,66\}}v_{1} +  \gamma_{\{12,13,90,114,116,138,183,184\}}v_{8} +  \gamma_{\{90,117,183,184\}}v_{9}\\
&\qquad +  \gamma_{\{78,79\}}v_{4} +  \gamma_{\{15,16,30,95,96,98,99,105,121,128,133,145,146,170,187,191\}}v_{10}\\
&\qquad +  \gamma_{\{18,30,93,94,121,170,187,189,190,191,192,194,199\}}v_{11} +  \gamma_{138}v_{22}\\
&\qquad +  \gamma_{\{5,19,20,30,93,94,95,96,116,121,189,190,191,192,194,199\}}v_{12} +  \gamma_{\{142,152,199\}}v_{24}\\
&\qquad +  \gamma_{\{5,21,95,96,116,170,187\}}v_{13} +  \gamma_{\{25,30,98,99,105,117,125,133,146,191\}}v_{14}\\
&\qquad +  \gamma_{\{26,30,98,99,105,117,125,128,133,191\}}v_{15} +  \gamma_{\{29,103,133,147,152\}}v_{16} +  \gamma_{146}v_{25}\\
&\qquad +  \gamma_{\{5,30,31,103,133,147,152\}}v_{17} +  \gamma_{\{48,49,142,145,146,152,199\}}v_{23} \equiv_{\omega} 0.
\end{align*}
These equalities imply
\begin{equation}\label{c63}
\gamma_j = 0,\ j \in \mathbb J_4 \mbox{ and } \gamma_s = \gamma_t,\ (s,t) \in \mathbb K_4,
\end{equation}
where $\mathbb J_4 = \{$82, 84, 116, 123, 128, 133, 135, 138, 145, 146, 147, 149, 152, 154, 159, 161$\}$ and $\mathbb K_4 = \{$(4,29), (4,103), (4,192), (12,90), (16,96), (31,105), (31,194), (48,49), (48,142), (48,199), (65,66), (78,79)$\}$.

Now, by applying the homomorphism $p_{(i;(u,v))}$, $2\leqslant i< u < v \leqslant 5,$ to \eqref{ctd612} and using \eqref{c611}, \eqref{c612}, \eqref{c613}, \eqref{c62}, \eqref{c63}, we have
\begin{align*}
&p_{(2;(3,4))}(S) \equiv_{\omega} \gamma_{\{2,3,13,14,15,21,23,26,37,38,48\}}v_{3} +  \gamma_{\{19,22,37,39,48\}}v_{4} +   \gamma_{30}v_{6}\\
&\qquad +  \gamma_{5}v_{7} +  \gamma_{\{13,30,34,37,39,67,68,170\}}v_{10} +  \gamma_{\{48,72,91,93,97,98,172,189\}}v_{12}\\
&\qquad +  \gamma_{\{3,13,65,184\}}v_{9} +  \gamma_{\{48,74,92,95,117\}}v_{13} +  \gamma_{\{37,39,48,75,91,93,97,98\}}v_{14}\\
&\qquad +  \gamma_{\{30,34,38,39,48,76,78,92,95,117\}}v_{15} +  \gamma_{\{4,34\}}v_{17} +  \gamma_{5}v_{18} +  \gamma_{30}v_{20}\\
&\qquad +  \gamma_{5}v_{21} +  \gamma_{184}v_{24} +  \gamma_{189}v_{25} +  \gamma_{30}v_{27} +  \gamma_{5}v_{28} \equiv_{\omega} 0,\\
&p_{(2;(3,5))}(S) \equiv_{\omega} \gamma_{\{1,3,12,14,16,18,20,22,25,31,34,37,38,39\}}v_{2} +  \gamma_{\{20,23,38,39,48\}}v_{4}\\
&\qquad +  \gamma_{4}v_{5}  +  \gamma_{\{12,65,183\}}v_{8} +  \gamma_{\{5,12,16,31,38,39,67,69,92,97,99,172,187,190,191\}}v_{10}\\
&\qquad +  \gamma_{31}v_{6} +  \gamma_{\{16,48,73,92,97,99,172,190\}}v_{12} +  \gamma_{\{31,37,75,78,91,94,121,125\}}v_{14}\\
&\qquad +  \gamma_{\{71,91,94,114\}}v_{11} +  \gamma_{\{16,38,39,48,76,92,97,99\}}v_{15} +  \gamma_{4}v_{16} +  \gamma_{\{5,34\}}v_{17} +  \gamma_{4}v_{19}\\
&\qquad +  \gamma_{31}v_{20} +  \gamma_{\{31,48,169,172,183,187,190,191\}}v_{23} +  \gamma_{190}v_{25} +  \gamma_{4}v_{26} +  \gamma_{31}v_{27} \equiv_{\omega} 0,\\
&p_{(2;(4,5))}(S) \equiv_{\omega}  \gamma_{\{1,15,16,18,19,20,25,30,31,34,37,38,39\}}v_{2} +   \gamma_{\{2,15,16,21,26,37,38\}}v_{3}\\
&\qquad +  \gamma_{\{25,26,34,39\}}v_{4} +  \gamma_{\{12,13,65,114,183\}}v_{8} +  \gamma_{\{3,12,13,65,117,184\}}v_{9}\\
&\qquad +  \gamma_{\{4,5,16,37,38,68,69,95,98,99,121,189,190,191\}}v_{10} +  \gamma_{\{16,72,73,93,94,95\}}v_{12}\\
&\qquad + \gamma_{\{3,12,13\}}v_{1} +  \gamma_{\{16,74,95\}}v_{13} +  \gamma_{\{30,31,34,39,78,98,99,117,125,191\}}v_{14}\\
&\qquad +  \gamma_{\{30,31,78,98,99,117,125\}}v_{15} +  \gamma_{\{4,30,31,48,169,170,187,189,190,191\}}v_{23}\\
&\qquad  +  \gamma_{\{71,93,94\}}v_{11} +  \gamma_{\{183,184\}}v_{22} +  \gamma_{\{5,48,187\}}v_{24} +  \gamma_{191}v_{25} \equiv_{\omega} 0,\\
&p_{(3;(4,5))}(S) \equiv_{\omega} \gamma_{\{1,2,3,4,5\}}v_{1} +  \gamma_{\{5,12,14,15,16,18,22,23,25,26,34,48\}}v_{2} +  \gamma_{48}v_{4}\\
&\qquad +   \gamma_{\{4,13,14,15,16,19,20,21,22,23,25,26,30,31,34\}}v_{3}  +  \gamma_{\{4,5,65,67,68,69,71,75,76,169\}}v_{8}\\
&\qquad +  \gamma_{\{3,4,5,65,67,68,69,72,73,74,75,76,170,172\}}v_{9} +  \gamma_{\{114,117,121,125\}}v_{11}\\
&\qquad +  \gamma_{\{12,13,16,30,31,34,48,91,92,93,94,95,97,98,99,117,125\}}v_{10} +  \gamma_{121}v_{12} +  \gamma_{125}v_{13}\\
&\qquad +  \gamma_{48}v_{14} +  \gamma_{48}v_{15} +  \gamma_{\{169,170,172\}}v_{22} +  \gamma_{\{4,48,170,183,187,191\}}v_{23}\\
&\qquad +  \gamma_{\{5,30,31,170,172,184,187,189,190,191\}}v_{24} +  \gamma_{48}v_{25}  \equiv_{\omega} 0.
\end{align*}
By a direct computation using the above equalities we get $\gamma_t =0$ for all $t, \ 1 \leqslant t \leqslant 205$. The theorem is proved.
\end{proof}

\subsection{Computation of $QP_5((4)|^2|(3)|^{d-4}|(1))$}\

\medskip
Let $d \geqslant 8$. From Proposition \ref{mdd72} below and the monomials as given in Subsection \ref{s52}, we see that for any $x \in B_5((4)|^2|(3)|^{d-4})$ there exists uniquely a sequence $J_x = (j_1^x,j_2^x,j_3^x)$ such that $1 \leqslant j_1^x < j_2^x < j_3^x\leqslant 5$ and $\nu_{j_s^x}(x)> 32$ for $s = 1,\, 2,\, 3$. Consider the homomorphism $\theta_J: P_r \to P_k$ defined by \eqref{ctbs} for $r =3,\, k = 5$. The main result of this subsection is the following.

\begin{thms}\label{thd73} For any $d \geqslant 8$, we have
$$B_5((4)|^2|(3)|^{d-2}|(1)) = \left\{x\theta_{J_x}\big(x_u^{2^{d-2}}\big): x \in B_5((4)|^2|(3)|^{d-4}),\, u = 1,\, 2,\, 3\right\}.$$
Consequently, 
$\dim QP_5((4)|^2|(3)|^{d-2}|(1)) = 1395.$
\end{thms}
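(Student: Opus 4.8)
Throughout write $\omega:=(4)|^2|(3)|^{d-4}|(1)$ and $\omega':=(4)|^2|(3)|^{d-4}$, so $\omega=\omega'|(1)$, $\deg\omega'=3\cdot 2^{d-2}$ and $\deg\omega=2^{d}$. The whole argument is organised around the fact that the last entry of $\omega$ equals $1$. If $y$ is a monomial with $\omega(y)=\omega$, then $\omega_{d-1}(y)=1$, so exactly one variable $x_j$ satisfies $2^{d-2}\leqslant\nu_j(y)<2^{d-1}$ while $\nu_i(y)<2^{d-2}$ for $i\ne j$; hence $y$ factors \emph{uniquely} as $y=x\cdot x_j^{2^{d-2}}$ with $\omega(x)=\omega'$ and $\nu_j(x)<2^{d-2}$ (deleting the top bit produces no borrow), and conversely for any monomial $x$ of weight $\omega'$ and any index $j$ the product $x\,x_j^{2^{d-2}}$ again has weight $\omega$ (adding the bit $2^{d-2}$ to one exponent $<2^{d-2}$ produces no carry). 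Thus $(x,j)\mapsto x\,x_j^{2^{d-2}}$ is a bijection onto the monomials of weight $\omega$; invoking the forward‑referenced Proposition \ref{mdd72}, which determines $B_5(\omega')$ (of cardinality $465$) together with the unique triple $J_x=(j_1^x,j_2^x,j_3^x)$ of indices with $\nu_{j_s^x}(x)>32$, the set $\mathcal R:=\{x\,\theta_{J_x}(x_u^{2^{d-2}}):x\in B_5(\omega'),\ u=1,2,3\}$ has exactly $3\cdot 465=1395$ elements. Since $[B_5(\omega)]_\omega$ is a basis of $QP_5(\omega)$, proving the theorem amounts to proving the two inclusions $B_5(\omega)=\mathcal R$.

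\emph{Necessity ($B_5(\omega)\subseteq\mathcal R$).} Let $y=x\,x_j^{2^{d-2}}$ be admissible with $\omega(x)=\omega'$. If $x\notin B_5(\omega')$, the reductions in the proof of Proposition \ref{mdd72} write $x=w\,z^{2^{t}}$ with $w$ one of the strictly inadmissible monomials of Lemmas \ref{bda61}, \ref{bda62}, \ref{bda63} (or a $\theta_{J_r}$‑translate), $\omega_i(w)=0$ for $i>s_0:=\max\{i:\omega_i(w)>0\}\leqslant t\leqslant d-2$; then $y=w\cdot\bigl(z^{2^{t-s_0}}x_j^{2^{d-2-s_0}}\bigr)^{2^{s_0}}$ is strictly inadmissible by Theorem \ref{dlcb1}(ii), a contradiction, so $x\in B_5(\omega')$. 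If moreover $j\notin\{j_1^x,j_2^x,j_3^x\}$, i.e.\ $\nu_j(x)\leqslant 32$, then $\nu_j(y)$ has an isolated top bit separated by a gap from its low bits, and a finite inspection of the explicit list $B_5(\omega')$ exhibits a factorisation $y=w'\,z'^{2^{t'}}$ with $w'$ strictly inadmissible (again from Lemmas \ref{bda61}--\ref{bda63}, or a relation proved by the same Cartan‑formula computation as in Lemma \ref{bda63}) and $\omega_i(w')=0$ for $i>t'$, whence $y$ is inadmissible by Theorem \ref{dlcb1}(ii). Hence $j\in\{j_1^x,j_2^x,j_3^x\}$ and $y\in\mathcal R$.

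\emph{Sufficiency ($\mathcal R\subseteq B_5(\omega)$).} It remains to show the $1395$ monomials of $\mathcal R$ are admissible, equivalently $\dim QP_5(\omega)\geqslant 1395$. I would do this exactly as in the proofs of Theorems \ref{mdd51} and \ref{mdd52}: assume a relation $\mathcal S:=\sum_{x,u}\gamma_{x,u}\,x\,\theta_{J_x}(x_u^{2^{d-2}})\equiv_\omega 0$ in $QP_5(\omega)$, apply all homomorphisms $p_{(i;I)}:QP_5(\omega)\to QP_4(\omega)$ of Lemma \ref{bdm}, and compare coefficients against the known basis of $QP_4(\omega)$; the resulting system, together with the linear independence of $[B_5(\omega')]_{\omega'}$ entering through the ``bottom'' factor $x$ (Proposition \ref{mdd72}), forces every $\gamma_{x,u}=0$. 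For the ``spike'' monomials, those with $\nu_{j_u^x}(x)=2^{d-2}-1$, one has $x\,\theta_{J_x}(x_u^{2^{d-2}})=x_{j_u^x}^{2^{d-1}-1}\theta_{J_{j_u^x}}(\bar x)$, and admissibility follows from Proposition \ref{mdmo} once one checks the complementary factor $\bar x\in P_4$ is admissible; the remaining monomials are handled by the same homomorphism bookkeeping restricted to the appropriate $\sigma$‑range.

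\emph{Main obstacle.} As in Theorems \ref{mdd51} and \ref{mdd52}, the real content is the sufficiency step — showing that the $1395$ monomials of $\mathcal R$ are genuinely not hit modulo smaller monomials. The propagation results of Theorem \ref{dlcb1} only push \emph{inadmissibility} upward, and Proposition \ref{mdmo} covers only the spike exponent $2^{d-1}-1$; everything else rests on the full linear‑algebra argument with the $p_{(i;I)}$‑images, whose size (a relation in $1395$ unknowns) is the principal technical burden. By contrast the weight‑vector split, the bijective count $3\cdot 465=1395$, and the necessity direction (a finite reduction to the strictly inadmissible Lemmas \ref{bda61}--\ref{bda63}) are routine once Proposition \ref{mdd72} is in hand.
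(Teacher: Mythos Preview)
Your overall structure and your sufficiency direction (linear independence via the $p_{(i;I)}$) match the paper. The necessity direction, however, diverges from the paper's argument and contains a genuine gap.

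The paper does not argue directly from your factorization $y=x\cdot x_j^{2^{d-2}}$. Instead it first peels off the bottom weight entry: since $\omega_1(y)=4$ one writes $y=X_i z^2$, and the contrapositive of Theorem~\ref{dlcb1}(i) makes $z$ admissible of weight $(4)|(3)|^{d-4}|(1)$. The paper then invokes the analogous, already-proved result \cite[Proposition~3.3.1]{sux} for that lower weight, which yields $z=\bar x\,x_t^{2^{d-3}}$ with $\bar x\in B_5((4)|(3)|^{d-4})$ \emph{and} $t\in J_{\bar x}$. Setting $x:=X_i\bar x^2$ one gets $J_x=J_{\bar x}$ automatically, and the membership $x\in B_5(\omega')$ then follows from the dichotomy in the proof of Proposition~\ref{mdd72} combined with Theorem~\ref{dlcb1}(ii). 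Thus both of your hard steps --- ``$x$ admissible'' and ``$j\in J_x$'' --- are outsourced to the cited result in \cite{sux}.

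Your direct attempt has two concrete problems. First, the lemmas you cite (\ref{bda61}--\ref{bda63}) treat the weights $(2)|(1)|^{d-1}$ and $(4)|(2)|^{d-2}$; the lemmas actually feeding Proposition~\ref{mdd72} are \ref{bda72}--\ref{bda75}. Second, and this is the real gap, the proof of Proposition~\ref{mdd72} does \emph{not} show that every inadmissible $x$ of weight $\omega'$ factors as $wz^{2^t}$ with $w$ strictly inadmissible; it only handles those $x=X_r(y')^2$ with $y'$ already admissible of weight $(4)|(3)|^{d-4}$. You never establish that the $y'$ sitting under your $x$ is admissible, and that is precisely what the appeal to \cite{sux} supplies in the paper. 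The same issue recurs in your ``$j\notin J_x$'' step: the paper obtains $t\in J_{\bar x}=J_x$ for free from \cite{sux}, whereas your ``finite inspection of the explicit list'' is a placeholder for a case analysis you have not performed.
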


\begin{proof} Let $y$ be an admissible monomial of weight vector $(4)|^2|(3)|^{d-4}|(1)$, then $y = X_i\bar x^2x_t^{2^d-2}$ with $1 \leqslant i,\, t \leqslant 5$ and $\bar x$ an admissible monomial of weight vector $(4)|(3)|^{d-4}$. From our work \cite[Proposition 3.3.1]{sux} we see that there exists uniquely a sequence $J_{\bar x} = (j_1^{\bar x},j_2^{\bar x},j_3^{\bar x})$ such that $1 \leqslant j_1^{\bar x} < j_2^{\bar x} < j_3^{\bar x}\leqslant 5$ and $\nu_{j_s^{\bar x}}(\bar x)> 16$ for $s = 1,\, 2,\, 3$ and $x_t = \theta_{J_{\bar x}}(x_u)$ for suitable $1\leqslant u \leqslant 3$. Then, by Theorem \ref{dlcb1}, $x = X_i\bar x^2$ is an admissible monomial of weit vector $(4)|^2|(3)|^{d-4}$, $J_x = J_{\bar x}$ and $\nu_{j_s^x}(x)> 32$ for $s = 1,\, 2,\, 3$. Thus, we have proved that 
$$B_5((4)|^2|(3)|^{d-2}|(1)) \subset \left\{x\theta_{J_x}\left(x_u^{2^{d-2}}\right): x \in B_5((4)|^2|(3)|^{d-4}),\, u = 1,\, 2,\, 3\right\}.$$
We now prove that the set $\left\{\big[x\theta_{J_x}\big(x_u^{2^{d-2}}\big)\big]_\omega: x \in B_5((4)|^2|(3)|^{d-4}),\, u = 1,\, 2,\, 3\right\}$ is linearly independent in $QP_5(\omega)$ with $\omega := (4)|^2|(3)|^{d-4}|(1)$. Suppose there is a linear relation
\begin{equation}\label{ctd711}
\mathcal S:= \sum_{x\in B_5((4)|^2|(3)|^{d-4})\atop u = 1,2,3}\gamma_{x,u}x\theta_{J_x}\big(x_u^{2^{d-2}}\big) \equiv_{\omega} 0,
\end{equation}
with $\gamma_{x,u} \in \mathbb F_2$. Consider the set $B_4(\omega)$ as given in \cite{su2} and the homomorphism $p_{(i;I)}:P_5\to P_4$ as defined by \eqref{ct23} for $k=5$. We compute $p_{(i;I)}(\mathcal S)$ in terms of the elements in $B_4(\omega)$ (mod($P_4^-(\omega)+\mathcal A^+P_4$). From the relation $p_{(i;I)}(\mathcal S) \equiv_\omega 0$ with all $(i,I)\in \mathcal N_5$ we obtain $\gamma_{x,u} = 0$ for all $x\in B_5((4)|^2|(3)|^{d-4})$ and $u = 1,2,3$. The theorem is proved.	
\end{proof}

In the remaining part of this subsection, we explicitly determine the space $QP_5((4)|^2|(3)|^{d-4})$ for any $d \geqslant 8$. 

From our work \cite{su2}, we have $|B_4((4)|^2|(3)|^{d-4})| = |B_4^+((4)|^2|(3)|^{d-4})| = 15$, hence we get $|B_5^0((4)|^2|(3)|^{d-4})| = 15{5\choose 4} = 75$. In this subsection we prove the following.

\begin{props}\label{mdd72} For any $d \geqslant 8$, $B_5^+((4)|^2|(3)|^{d-4})$ is the set of $390$ admissible monomials which are determined as in Subsection $\ref{s52}$. Consequently, 
	$$\dim QP_5^+((4)|^2|(3)|^{d-4}) = 390,\ \dim QP_5((4)|(3)|^{d-4}) = 465.$$
\end{props}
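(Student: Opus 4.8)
The plan is to follow the same two-part template used in the proofs of Theorems \ref{mdd51} and \ref{mdd52}: first show the set of $390$ listed monomials contains $B_5^+((4)|^2|(3)|^{d-4})$, then show the corresponding classes are linearly independent in $QP_5(\omega)$ with $\omega := (4)|^2|(3)|^{d-4}$. For the containment, I would take an admissible monomial $x \in P_5^+(\omega)$; since $\omega_1 = 4$ we may write $x = X_i y^2$ with $1 \leqslant i \leqslant 5$ and $y$ a monomial of weight vector $(4)|(3)|^{d-5}$, which by Theorem \ref{dlcb1} must be admissible. The admissible monomials of weight vector $(4)|(3)|^{d-5}$ in $P_5$ are already known from \cite{sux} (this is exactly the input cited in the proof of Theorem \ref{thd73}), so the argument reduces to checking, for each such $\bar y$ and each $i$, whether $X_i\bar y^2$ lies in the declared list in Subsection \ref{s52}; if not, I would exhibit a factorization $X_i\bar y^2 = w z^{2^r}$ with $w$ one of the strictly inadmissible monomials supplied by Lemmas \ref{bda62}, \ref{bda63} (and, if needed, an analogue at the next $2$-adic level), so that $X_i\bar y^2$ is inadmissible by Theorem \ref{dlcb1}. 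This yields $B_5^+(\omega) \subset \{b : b \text{ as in Subsection } \ref{s52}\}$.

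For linear independence I would first split the $390$ monomials according to whether $\nu(x) = 2^{d-3}-1$ (those of the form $\theta_{J_i}(\bar x)$ with $\bar x$ admissible in $P_4$, which are automatically admissible by Proposition \ref{mdmo}) or $\nu(x) < 2^{d-3}-1$; as in the proof of Theorem \ref{mdd52} these two subspaces meet only in $\{0\}$ because the homomorphisms $p_{(i;I)}$ and the value of $\nu$ separate them, so it suffices to prove independence within each block, the first block contributing the expected count and the second block being handled by the explicit relation. Then I would assume a relation $\mathcal S := \sum \gamma_t b_{d,t} \equiv_\omega 0$ and apply the homomorphisms $p_{(i;I)}^{(\omega)} : QP_5(\omega) \to QP_4(\omega)$ of \eqref{ct23} for every $(i;I) \in \mathcal N_5$, expressing each $p_{(i;I)}(\mathcal S)$ in terms of the known basis $B_4(\omega)$ of $QP_4(\omega)$ from \cite{su2}. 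Each such image gives a batch of linear equations on the $\gamma_t$; running through $(1;j)$, $(i;j)$, $(1;(u,v))$, $(i;(u,v))$ and the larger index sets in order, each step forcing a new collection of $\gamma_t$ to vanish or to be equal, exactly as in the proof of Theorem \ref{mdd52}, should pin down all $\gamma_t = 0$.

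The main obstacle is the sheer bookkeeping: one must correctly enumerate the admissible monomials of weight vector $(4)|(3)|^{d-5}$, verify the factorization claim monomial-by-monomial for the inadmissibility half (this is where an unlisted $X_i\bar y^2$ could slip through if a strictly inadmissible factor is missed, so the list in Lemmas \ref{bda62}--\ref{bda63} must be exactly sufficient), and then carry out the $p_{(i;I)}$ computations without arithmetic error in the $390 \times |\mathcal N_5|$ array of coefficients. The combinatorial structure is routine once set up — each $p_{(i;I)}$ is an $\mathcal A$-algebra map and Lemma \ref{bdm} guarantees it preserves the weight vector — but the volume of cases is large, and in practice one would organize the independence argument so that each application of $p_{(i;I)}$ is only needed to kill the $\gamma_t$ not already eliminated, mirroring the cascade \eqref{c611}--\eqref{c613} and beyond in Theorem \ref{mdd52}. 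Once all $\gamma_t$ vanish, combined with the containment, we conclude $B_5^+((4)|^2|(3)|^{d-4})$ is precisely the set of $390$ listed monomials, and the dimension count $\dim QP_5^+((4)|^2|(3)|^{d-4}) = 390$, $\dim QP_5((4)|^2|(3)|^{d-4}) = 390 + 75 = 465$, follows immediately.
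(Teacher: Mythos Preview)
Your overall two-part strategy (containment via strictly inadmissible factorizations, then linear independence via the maps $p_{(i;I)}$) is the same as the paper's, but several of the concrete ingredients you invoke are wrong, and without correcting them the proof would not go through.

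First, if $x \in P_5^+$ has weight vector $\bar\omega = (4)|^2|(3)|^{d-4}$ and you write $x = X_i y^2$, then $y$ has weight vector $(4)|(3)|^{d-4}$, not $(4)|(3)|^{d-5}$: stripping one layer removes only one entry from the front of the weight vector, so the number of trailing $3$'s is unchanged. Second, Lemmas \ref{bda62} and \ref{bda63} concern the weight vector $(4)|(2)|^{d-2}$; they are the wrong source of inadmissible factors here. The paper supplies a separate suite of lemmas (\ref{bda72}--\ref{bda75}) tailored to the weight vectors $(4)|^2|(3)|^r$ for $r = 0,\ldots,4$, and these are what drive the factorization $x = w z_1^{2^u}$ with $2 \leqslant u \leqslant 6$ and $z_1$ of weight vector $(3)|^{d-u}$. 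Third, your description of the ``automatically admissible'' block is off: monomials of the form $\theta_{J_i}(\bar x)$ lie in $P_5^0$, not $P_5^+$. The paper's block $\bar A(d)$ consists of monomials $x_i^{2^{d-2}-1}\theta_{J_i}(w)$ with $w \in B_4^+((3)|^2|(2)|^{d-4})$ (there are $5 \times 77 = 385$ such, of which $275$ are distinct after removing repetitions --- actually $|\bar A(d)| = 275$), and it is this form to which Proposition \ref{mdmo} applies. Correspondingly the separating invariant is $\nu(x) = 2^{d-2}-1$, not $2^{d-3}-1$. The remaining block $\bar B(d)$ has $115$ monomials, and it is on these that the cascade of $p_{(i;I)}$ computations is run.

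Once these corrections are made, your plan matches the paper's proof exactly.
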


The proposition is proved by using Theorems \ref{dlcb1}, \ref{dlsig} and some technical lemmas. 

\begin{lems}\label{bda72} Let $(i,j,t,u,v)$ be an arbitrary permutation of $(1,2,3,4,5)$. The following monomials are strictly inadmissible:
	$$x_i^2xjx_t^3x_u^3x_v^3,\, i< j;\ x_i^3x_j^4x_t^3x_u^7x_v^7,\, i<j<t.$$
\end{lems}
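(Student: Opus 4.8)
I would prove this the same way the earlier strict‑inadmissibility lemmas (\ref{bda61}, \ref{bda63}) are proved: for each monomial $u$ in the list I would exhibit an explicit identity
$u=\sum_jSq^{j}(h_j)+\sum_l w_l$ in $P_5$ with every $w_l<u$ (in the order of Definition \ref{defn3}) and every $Sq^{j}$ lying in $\mathcal A_{s-1}^+$, where $s=\max\{i:\omega_i(u)>0\}$; the Cartan formula then produces everything mechanically, and the strict‑inadmissibility of $u$ is immediate from the definition. The preliminary step is to read off the relevant weight vectors: for $u_1=x_i^2x_jx_t^3x_u^3x_v^3$ one gets $\omega(u_1)=(4,4)$, so $s=2$ and only $Sq^1,Sq^2$ are allowed; for $u_2=x_i^3x_j^4x_t^3x_u^7x_v^7$ one gets $\omega(u_2)=(4,4,3)$, so $s=3$ and $Sq^1,Sq^2,Sq^4$ are allowed. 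Because the weight vector is permutation‑invariant and the positions that enter the order comparisons below are pinned down by the standing inequalities $i<j$ (resp. $i<j<t$), the computations will be uniform in the permutation $(i,j,t,u,v)$, so no case distinction on the permutation is needed.

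For $u_1$ I would simply apply the derivation $Sq^1$ (recall $Sq^1(x^n)=nx^{n+1}$) to $x_ix_jx_t^3x_u^3x_v^3$, giving
\begin{align*}
Sq^1\big(x_ix_jx_t^3x_u^3x_v^3\big)&=x_i^2x_jx_t^3x_u^3x_v^3+x_ix_j^2x_t^3x_u^3x_v^3+x_ix_jx_t^4x_u^3x_v^3\\
&\quad+x_ix_jx_t^3x_u^4x_v^3+x_ix_jx_t^3x_u^3x_v^4.
\end{align*}
Here $x_ix_j^2x_t^3x_u^3x_v^3$ again has weight vector $(4,4)$, but since $i<j$ its exponent vector is smaller than that of $u_1$ at position $i$, so it is $<u_1$; each of the last three monomials has weight vector $(4,2,1)<(4,4)$, hence is $<u_1$. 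Thus $u_1+(\text{four monomials}<u_1)=Sq^1(\,\cdot\,)\in\mathcal A_1^+P_5$, which is exactly strict inadmissibility. (Equivalently, one may absorb the lower‑weight terms by working modulo $P_5^-((4,4))$, as in Lemma \ref{bda63}.)

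For $u_2$ I would first apply $Sq^1$ to $x_i^3x_j^3x_t^3x_u^7x_v^7$:
\begin{align*}
Sq^1\big(x_i^3x_j^3x_t^3x_u^7x_v^7\big)&=x_i^4x_j^3x_t^3x_u^7x_v^7+x_i^3x_j^4x_t^3x_u^7x_v^7+x_i^3x_j^3x_t^4x_u^7x_v^7\\
&\quad+x_i^3x_j^3x_t^3x_u^8x_v^7+x_i^3x_j^3x_t^3x_u^7x_v^8.
\end{align*}
Among the companions of $u_2$ on the right, $x_i^3x_j^3x_t^4x_u^7x_v^7$ has weight $(4,4,3)$ and (since $j<t$) exponent vector below $u_2$ at position $j$, while the two monomials carrying an exponent $8$ have weight $(4,4,1)<(4,4,3)$; so these three are $<u_2$. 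The one troublesome companion is $x_i^4x_j^3x_t^3x_u^7x_v^7$, which has weight $(4,4,3)$ but is $>u_2$ (it differs from $u_2$ only at positions $i,j$ and carries the larger value at position $i$). I would kill it with $Sq^2\big(x_i^2x_j^3x_t^3x_u^7x_v^7\big)$: in the Cartan expansion of this $Sq^2$, the term with $Sq^2$ falling entirely on $x_i^2$ is precisely $x_i^4x_j^3x_t^3x_u^7x_v^7$, and every other term keeps the exponent $2$ at position $i$ (because $Sq^1(x_i^2)=0$), so every other term is either of weight $<(4,4,3)$ or of weight $(4,4,3)$ with exponent vector below $u_2$ at position $i$ — in all cases $<u_2$. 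Combining the two identities yields
$u_2=Sq^1\big(x_i^3x_j^3x_t^3x_u^7x_v^7\big)+Sq^2\big(x_i^2x_j^3x_t^3x_u^7x_v^7\big)+\sum_l w_l$ with every $w_l<u_2$ and $Sq^1,Sq^2\in\mathcal A_2^+$, so $u_2$ is strictly inadmissible. The step I expect to take the most care is the bookkeeping verification that every one of the fifteen‑or‑so monomials thrown off by these two Cartan expansions really is $<u_2$: each must be checked to be either of weight vector $<(4,4,3)$ or of weight $(4,4,3)$ with exponent vector lexicographically below that of $u_2$ at a coordinate forced by $i<j<t$. This is finite and routine, entirely in the spirit of Lemmas \ref{bda61} and \ref{bda63}.
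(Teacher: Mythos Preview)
Your proposal is correct and takes essentially the same approach as the paper. The paper's proof uses exactly the operations you identify: for $u_1$ it writes $x_i^2x_jx_t^3x_u^3x_v^3 = x_ix_j^2x_t^3x_u^3x_v^3 + Sq^1(x_ix_jx_t^3x_u^3x_v^3)$ modulo $P_5^-((4)|^2)$, and for $u_2$ it combines $Sq^1(x_i^3x_j^3x_t^3x_u^7x_v^7)$ with $Sq^2(x_i^2x_j^3x_t^3x_u^7x_v^7)$ to obtain $x_i^3x_j^4x_t^3x_u^7x_v^7 = x_i^2x_j^3x_t^5x_u^7x_v^7 + x_i^2x_j^5x_t^3x_u^7x_v^7 + x_i^3x_j^3x_t^4x_u^7x_v^7 + Sq^1(\cdots)+Sq^2(\cdots)$ modulo $P_5^-((4)|^2|(3))$, matching your cancellation of the troublesome $x_i^4$ term and your list of surviving companions.
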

\begin{proof} We have
\begin{align*}
x_i^2xjx_t^3x_u^3x_v^3 = x_ixj^2x_t^3x_u^3x_v^3	+ Sq^1(x_ixjx_t^3x_u^3x_v^3)\ \mbox{mod}\big(P_5^-((4)|^2)\big).
\end{align*}
Since $i < j$, the monomial $x_i^2xjx_t^3x_u^3x_v^3$ is strictly inadmissible.
\begin{align*}
x_i^3x_j^4x_t^3x_u^7x_v^7 &= x_i^2x_j^3x_t^5x_u^7x_v^7 + x_i^2x_j^5x_t^3x_u^7x_v^7 + x_i^3x_j^3x_t^4x_u^7x_v^7\\ &\quad + Sq^1(x_i^3x_j^3x_t^3x_u^7x_v^7) + Sq^2(x_i^2x_j^3x_t^3x_u^7x_v^7)\ \mbox{mod}\big(P_5^-((4)|^2|(3)\big).
\end{align*}
Since $i < j < t$, the monomial $x_i^3x_j^4x_t^3x_u^7x_v^7$ is strictly inadmissible. The lemma is proved.
\end{proof}

\begin{lems}\label{bda73} Let $(i,j,t,u,v)$ be an arbitrary permutation of $(1,2,3,4,5)$. The following monomials are strictly inadmissible:
	
\smallskip
\ \! {\rm i)} $x_i^{7}x_j^{9}x_t^{2}x_u^{15}x_v^{15},\, x_i^{3}x_j^{7}x_t^{8}x_u^{15}x_v^{15},\, x_i^{7}x_j^{3}x_t^{8}x_u^{15}x_v^{15},\, x_i^{7}x_j^{8}x_t^{3}x_u^{15}x_v^{15},\ i<j<t.$
	
\medskip
\ {\rm ii)} $x_r^{15}\theta_{J_r}(w)$, $1 \leqslant r \leqslant 5$, with $w$ one of the monomials:
$$x_1^{7}x_2^{8}x_3^{7}x_4^{11},\, x_1^{7}x_2^{9}x_3^{3}x_4^{14},\, x_1^{7}x_2^{9}x_3^{6}x_4^{11},\, x_1^{7}x_2^{9}x_3^{7}x_4^{10},\, x_1^{7}x_2^{9}x_3^{14}x_4^{3},\, x_1^{7}x_2^{11}x_3^{7}x_4^{8}.$$
	
{\rm iii)} $x_1^{7}x_2^{9}x_3^{7}x_4^{14}x_5^{11},\, x_1^{7}x_2^{9}x_3^{14}x_4^{7}x_5^{11}$.
\end{lems}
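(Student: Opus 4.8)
The plan is, for each monomial $z$ in the list, to exhibit an explicit identity expressing $z$ modulo $P_5^-(\omega)$ as a sum of monomials strictly smaller than $z$ together with a sum of terms of the form $Sq^{2^j}(h)$ with $2^j \leqslant 8$; such an identity is exactly a witness of strict inadmissibility, since $\sum_j Sq^{2^j}(h_j) \in \mathcal A_3^+P_5$ and the remaining terms are $<z$. Here one first notes that every monomial occurring in the lemma has degree $48$ and weight vector $\omega = (4,4,3,3) = (4)|^2|(3)|^2$, so that $s = \max\{i : \omega_i(z)>0\} = 4$ and only the operations $Sq^1, Sq^2, Sq^4, Sq^8$ are permitted in the decomposition. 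Monomials that appear along the way with weight vector strictly below $(4,4,3,3)$ lie in $P_5^-(\omega)$ and are moreover $\equiv 0$ by Theorem \ref{dlsig}, hence are discarded; one then checks that every surviving term of weight $(4,4,3,3)$ has exponent vector strictly smaller (in left lexicographic order) than $\sigma(z)$, so that it is $<z$ in the sense of Definition \ref{defn3}.

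Next I would reduce the number of cases. In Part i) the identities are written in terms of the abstract labels $i<j<t$ together with the fixed $u,v$, so a single explicit computation covers every admissible relabelling simultaneously. In Part iii) the two monomials differ by the transposition $(3\,4)$; since $\omega$ is symmetric under permutations of the variables and $Sq^i$ commutes with such permutations, the decomposition of the second monomial is obtained from that of the first by applying $(3\,4)$, the witness monomials staying of weight $\omega$ and remaining below the transposed monomial. In Part ii) the six monomials $x_r^{15}\theta_{J_r}(w)$, $1\leqslant r\leqslant 5$, for a fixed $w$ are permutes of one another, so it suffices to treat a single value of $r$, say $r=5$, i.e. the monomial $x_5^{15}w$ with $w \in P_4 = \mathbb F_2[x_1,x_2,x_3,x_4]$ one of the six listed degree-$33$ monomials of weight $(3,3,2,2)$.

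The heart of the argument is then the explicit computation, carried out exactly as in the proof of Lemma \ref{bda63}: starting from $z$ one repeatedly applies the Cartan formula together with $Sq^t(x_i^a) = \binom{a}{t} x_i^{a+t}$ to peel off the operations $Sq^1, Sq^2, Sq^4, Sq^8$, at each stage trading the current monomial for strictly smaller ones plus an explicit $Sq^{2^j}$ of a monomial with lower exponents; after finitely many steps only monomials $<z$ remain modulo $P_5^-(\omega)$, which is the required witness. In Part ii) the computation frequently factors through a strictly inadmissible monomial of lower weight multiplied by a $2^s$-th power, so that Theorem \ref{dlcb1}(ii) can be invoked; but in all six cases a direct decomposition is available.

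The main obstacle is purely computational and bookkeeping in nature: one must produce the correct $Sq^{2^j}$-expansions and verify term by term that (a) only $Sq^1,Sq^2,Sq^4,Sq^8$ occur, so that the error term lies in $\mathcal A_3^+P_5$; (b) every surviving monomial is $<z$ in the order of Definition \ref{defn3}, which amounts to comparing exponent vectors once the weight vector $(4,4,3,3)$ is matched; and (c) the debris of strictly smaller weight is correctly absorbed into $P_5^-(\omega)$. There is no conceptual difficulty, but the expressions are long — comparable in size to those displayed in the proof of Lemma \ref{bda63} — and are most reliably obtained and checked with computer assistance.
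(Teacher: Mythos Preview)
Your plan matches the paper's approach: explicit Cartan-formula decompositions modulo $P_5^-((4)|^2|(3)|^2)$ using only $Sq^1,Sq^2,Sq^4,Sq^8$, written in abstract indices $i<j<t$ for Part~i) and for representative cases in Parts~ii)--iii), with the remaining cases handled analogously.

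One caveat on your permutation reductions in Parts~ii) and~iii): from a witness $z=\sum_j v_j + (\text{terms in }\mathcal A_3^+P_5)$ with each $v_j<z$, applying a permutation $\tau$ certainly gives $\tau(z)=\sum_j\tau(v_j)+(\text{terms in }\mathcal A_3^+P_5)$, but $\tau(v_j)<\tau(z)$ does \emph{not} follow automatically when $\omega(v_j)=\omega(z)$, because the left-lexicographic order on exponent vectors is not permutation-equivariant (for instance $x_1x_2^3<x_1^3x_2$, yet the swap $(1\,2)$ reverses this). In the decompositions at hand it does work out --- in Part~i) the constraint $i<j<t$ guarantees that the first position where a witness differs from $z$ is $i$ or $j$, where the witness exponent is smaller; in Parts~ii)--iii) the paper's explicit witnesses all share the factor $x_5^{15}$ (resp.\ agree with $z$ at the positions the permutation fixes), so the permutation merely relabels the remaining coordinates and preserves the comparison --- but this is a feature of the particular identities and must be verified, not a formal consequence of symmetry. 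The paper accordingly says ``the others can be proved by a similar computation'' rather than permuting a single witness.
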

\begin{proof} Each monomial in the lemma is of weight vector $(4)|^2(3)|^2$. We prove the lemma for some monomials. The others can be proved by a similar computation. By using the Cartan formula, we have
\begin{align*}
&x_i^{3}x_j^{7}x_t^{8}x_u^{15}x_v^{15} = x_i^{2}x_j^{7}x_t^{9}x_u^{15}x_v^{15} + x_i^{2}x_j^{9}x_t^{7}x_u^{15}x_v^{15} + x_i^{3}x_j^{4}x_t^{11}x_u^{15}x_v^{15}\\ 
&\qquad + Sq^1\big(x_i^{3}x_j^{7}x_t^{7}x_u^{15}x_v^{15}\big) +  Sq^2\big(x_i^{2}x_j^{7}x_t^{7}x_u^{15}x_v^{15}\big)\\ 
&\qquad +  Sq^4\big(x_i^{3}x_j^{4}x_t^{7}x_u^{15}x_v^{15}\big)\ \mbox{mod}\big(P_5^-((4)|^2|(3)|^{2}\big),\\
&x_i^{7}x_j^{3}x_t^{8}x_u^{15}x_v^{15} = x_i^{4}x_j^{3}x_t^{11}x_u^{15}x_v^{15} + x_i^{5}x_j^{2}x_t^{11}x_u^{15}x_v^{15} + x_i^{7}x_j^{2}x_t^{9}x_u^{15}x_v^{15}\\ 
&\qquad +  Sq^1\big(x_i^{7}x_j^{3}x_t^{7}x_u^{15}x_v^{15}\big) +  Sq^2\big(x_i^{7}x_j^{2}x_t^{7}x_u^{15}x_v^{15}\big)\\ 
&\qquad +  Sq^4\big(x_i^{4}x_j^{3}x_t^{7}x_u^{15}x_v^{15} + x_i^{5}x_j^{2}x_t^{7}x_u^{15}x_v^{15}\big)\ \mbox{mod}\big(P_5^-((4)|^2|(3)|^{2}\big),\\
&x_i^{7}x_j^{8}x_t^{3}x_u^{15}x_v^{15} = x_i^{5}x_j^{3}x_t^{10}x_u^{15}x_v^{15} + x_i^{5}x_j^{10}x_t^{3}x_u^{15}x_v^{15} + x_i^{7}x_j^{3}x_t^{8}x_u^{15}x_v^{15}\\ 
&\qquad + Sq^1\big(x_i^{7}x_j^{5}x_t^{5}x_u^{15}x_v^{15}\big) +  Sq^2\big(x_i^{7}x_j^{6}x_t^{3}x_u^{15}x_v^{15} + x_i^{7}x_j^{3}x_t^{6}x_u^{15}x_v^{15}\big) \\ 
&\qquad+  Sq^4\big(x_i^{5}x_j^{6}x_t^{3}x_u^{15}x_v^{15} + x_i^{5}x_j^{3}x_t^{6}x_u^{15}x_v^{15}\big)\ \mbox{mod}\big(P_5^-((4)|^2|(3)|^{2}\big),\\
&x_i^{7}x_j^{9}x_t^{2}x_u^{15}x_v^{15} = x_i^{4}x_j^{11}x_t^{3}x_u^{15}x_v^{15} + x_i^{5}x_j^{11}x_t^{2}x_u^{15}x_v^{15} + x_i^{7}x_j^{8}x_t^{3}x_u^{15}x_v^{15}\\ 
&\qquad + Sq^1\big(x_i^{7}x_j^{7}x_t^{4}x_u^{15}x_v^{15}\big) +  Sq^2\big(x_i^{7}x_j^{7}x_t^{2}x_u^{15}x_v^{15}\big)\\ 
&\qquad +  Sq^4\big(x_i^{4}x_j^{7}x_t^{3}x_u^{15}x_v^{15} + x_i^{5}x_j^{7}x_t^{2}x_u^{15}x_v^{15}\big)\ \mbox{mod}\big(P_5^-((4)|^2|(3)|^{2}\big).
\end{align*}
Since $i < j < t$, the monomials in Part i) are strictly inadmissible.
	
We prove Part ii) for $x = x_1^{7}x_2^{9}x_3^{3}x_4^{14}x_5^{15}$ and $y = x_1^{7}x_2^{11}x_3^{7}x_4^{8}x_5^{15}$. We have
\begin{align*}
x &= x_1^{4}x_2^{7}x_3^{11}x_4^{11}x_5^{15} + x_1^{4}x_2^{11}x_3^{11}x_4^{7}x_5^{15} + x_1^{5}x_2^{11}x_3^{3}x_4^{14}x_5^{15} + x_1^{5}x_2^{11}x_3^{6}x_4^{11}x_5^{15}\\ 
&\quad + x_1^{5}x_2^{11}x_3^{10}x_4^{7}x_5^{15} + x_1^{7}x_2^{5}x_3^{10}x_4^{11}x_5^{15} + x_1^{7}x_2^{7}x_3^{8}x_4^{11}x_5^{15} + x_1^{7}x_2^{7}x_3^{10}x_4^{9}x_5^{15}\\ 
&\quad + x_1^{7}x_2^{7}x_3^{11}x_4^{8}x_5^{15} + x_1^{7}x_2^{8}x_3^{11}x_4^{7}x_5^{15} +  Sq^1\big(x_1^{7}x_2^{7}x_3^{5}x_4^{13}x_5^{15} + x_1^{7}x_2^{7}x_3^{11}x_4^{7}x_5^{15}\big)\\ 
&\quad +  Sq^2\big(x_1^{7}x_2^{3}x_3^{6}x_4^{7}x_5^{23} +x_1^{7}x_2^{7}x_3^{3}x_4^{14}x_5^{15} + x_1^{7}x_2^{7}x_3^{6}x_4^{11}x_5^{15} + x_1^{7}x_2^{7}x_3^{10}x_4^{7}x_5^{15}\big)\\ 
&\quad +  Sq^4\big(x_1^{4}x_2^{7}x_3^{11}x_4^{7}x_5^{15} + x_1^{5}x_2^{7}x_3^{3}x_4^{14}x_5^{15} + x_1^{5}x_2^{7}x_3^{6}x_4^{11}x_5^{15} + x_1^{5}x_2^{7}x_3^{10}x_4^{7}x_5^{15}\\ 
&\quad + x_1^{11}x_2^{5}x_3^{6}x_4^{7}x_5^{15}\big) +  Sq^8\big(x_1^{7}x_2^{5}x_3^{6}x_4^{7}x_5^{15}\big)\ \mbox{mod}\big(P_5^-((4)|^2|(3)|^{2}\big),\\
y &= x_1^{4}x_2^{11}x_3^{7}x_4^{11}x_5^{15} + x_1^{4}x_2^{11}x_3^{11}x_4^{7}x_5^{15} + x_1^{7}x_2^{10}x_3^{7}x_4^{9}x_5^{15} + x_1^{7}x_2^{10}x_3^{9}x_4^{7}x_5^{15}\\ 
&\quad + x_1^{5}x_2^{10}x_3^{7}x_4^{11}x_5^{15} + x_1^{5}x_2^{10}x_3^{11}x_4^{7}x_5^{15} + x_1^{7}x_2^{7}x_3^{8}x_4^{11}x_5^{15} + x_1^{7}x_2^{11}x_3^{4}x_4^{11}x_5^{15}\\ 
&\quad + Sq^1\big(x_1^{7}x_2^{11}x_3^{7}x_4^{7}x_5^{15}\big) + Sq^2\big(x_1^{7}x_2^{10}x_3^{7}x_4^{7}x_5^{15} + x_1^{7}x_2^{7}x_3^{2}x_4^{7}x_5^{23}\big)\\ 
&\quad +  Sq^4\big(x_1^{4}x_2^{11}x_3^{7}x_4^{7}x_5^{15} + x_1^{5}x_2^{10}x_3^{7}x_4^{7}x_5^{15} + x_1^{11}x_2^{7}x_3^{4}x_4^{7}x_5^{15}\big)\\ 
&\quad +  Sq^8\big(x_1^{7}x_2^{7}x_3^{4}x_4^{7}x_5^{15}\big)\ \mbox{mod}\big(P_5^-((4)|^2|(3)|^{2}\big).
\end{align*}
The above equalities show that the monomials $x,\, y$ are strictly inadmissible. 
	
Based on the Cartan formula we get
\begin{align*}
&x_1^{7}x_2^{9}x_3^{7}x_4^{14}x_5^{11} = x_1^{5}x_2^{7}x_3^{11}x_4^{11}x_5^{14} + x_1^{5}x_2^{7}x_3^{11}x_4^{14}x_5^{11} + x_1^{5}x_2^{11}x_3^{7}x_4^{11}x_5^{14}\\ 
&\qquad + x_1^{5}x_2^{11}x_3^{7}x_4^{14}x_5^{11} + x_1^{7}x_2^{7}x_3^{9}x_4^{11}x_5^{14} + x_1^{7}x_2^{7}x_3^{9}x_4^{14}x_5^{11} + x_1^{7}x_2^{9}x_3^{7}x_4^{11}x_5^{14}\\ 
&\qquad + Sq^1\big(x_1^{7}x_2^{7}x_3^{7}x_4^{13}x_5^{13}\big) +  Sq^2\big(x_1^{7}x_2^{7}x_3^{7}x_4^{11}x_5^{14} + x_1^{7}x_2^{7}x_3^{7}x_4^{14}x_5^{11}\big)\\ 
&\qquad +  Sq^4\big(x_1^{5}x_2^{7}x_3^{7}x_4^{11}x_5^{14} + x_1^{5}x_2^{7}x_3^{7}x_4^{14}x_5^{11}\big)\ \mbox{mod}\big(P_5^-((4)|^2|(3)|^{2}\big).
\end{align*}
This equality shows that the monomial $x_1^{7}x_2^{9}x_3^{7}x_4^{14}x_5^{11}$ is strictly inadmissible.
\end{proof}

\begin{lems}\label{bda74}The following monomials are strictly inadmissible:
	
\medskip
\ \! {\rm i)} $x_r^{31}\theta_{J_r}(w)$, $1 \leqslant r \leqslant 5$, with $w$ one of the monomials:
	
\medskip
\centerline{\begin{tabular}{llll} 
$x_1^{7}x_2^{7}x_3^{27}x_4^{24}$& $x_1^{7}x_2^{15}x_3^{16}x_4^{27}$& $x_1^{7}x_2^{15}x_3^{17}x_4^{26}$& $x_1^{15}x_2^{7}x_3^{16}x_4^{27} $\cr  $x_1^{15}x_2^{7}x_3^{17}x_4^{26}$& $x_1^{15}x_2^{15}x_3^{16}x_4^{19}$& $x_1^{15}x_2^{15}x_3^{17}x_4^{18}$& $x_1^{15}x_2^{15}x_3^{19}x_4^{16} $.\cr
\end{tabular}}
	
\medskip
\ {\rm ii)} $x_1^{7}x_2^{15}x_3^{17}x_4^{30}x_5^{27}$, $x_1^{15}x_2^{7}x_3^{17}x_4^{30}x_5^{27}$, $x_1^{15}x_2^{15}x_3^{17}x_4^{30}x_5^{19}$, $x_1^{15}x_2^{19}x_3^{15}x_4^{21}x_5^{26}$.
\end{lems}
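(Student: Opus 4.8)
Both parts of the lemma concern monomials of the single weight vector $\omega := (4)|^2|(3)|^3$ (degree $96$) in $P_5$, so that $s := \max\{i : \omega_i(x) > 0\} = 5$. Accordingly, for each monomial $u$ in the list the goal is to produce an identity
\[u \equiv \sum_{j} v_j \pmod{\mathcal A_4^+P_5 + P_5^-(\omega)}, \qquad v_j < u,\]
in which only the squaring operations $Sq^1,Sq^2,Sq^4,Sq^8,Sq^{16}$ occur, so that the correcting sum genuinely lies in $\mathcal A_4^+P_5$ --- this is exactly what strict inadmissibility with $s=5$ demands. These identities have the same shape as those displayed in the proofs of Lemmas \ref{bda63} and \ref{bda73}, and I would establish Lemma \ref{bda74} in the same manner: exhibit such an identity for each monomial via the Cartan formula, then verify term by term that every monomial on the right-hand side is $< u$, first comparing weight vectors (so that monomials of weight strictly below $\omega$ are absorbed by $P_5^-(\omega)$) and, for those of weight vector $\omega$, comparing exponent vectors in the left-lexicographic order.

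For part (i) the monomials are $x_r^{31}\theta_{J_r}(w)$ with $w$ one of eight monomials of $P_4$ of weight vector $(3)|^2|(2)|^3$. As in the proof of Lemma \ref{bda63}, I would carry out the computation for $r = 5$, i.e.\ for $x_1^{a_1}x_2^{a_2}x_3^{a_3}x_4^{a_4}x_5^{31}$, the remaining values $1\leqslant r\leqslant 4$ being handled by the analogous computation after relabelling the variables. For each of the eight $w$ the identity is built by the usual routine: lower the smallest odd exponents through $Sq^1$, then kill the resulting error terms successively with $Sq^2$, $Sq^4$, $Sq^8$ and $Sq^{16}$, each step introducing only monomials that fall in the order $<$ (typically with a strictly smaller weight vector, hence dead modulo $P_5^-(\omega)$, or with the same weight vector and a smaller exponent vector). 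Part (ii) is treated the same way, directly on the four listed monomials of $P_5$.

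The argument is purely computational and the only real difficulty is the bookkeeping: discovering, for each of these (effectively) twelve monomials, the precise combination of iterated squares whose Cartan expansion returns the monomial modulo $P_5^-(\omega)$, and then checking carefully that no monomial of weight vector $\omega$ that is $\geqslant u$ survives on the right --- a slip here would destroy strict inadmissibility, which is exactly the property needed when Theorem \ref{dlcb1}(ii) is later applied, in the proof of Proposition \ref{mdd72}, to pass from these base monomials to the general weight vector $(4)|^2|(3)|^{d-4}$. As a sanity check I would also confront the eight monomials $w$ of part (i) with the known classification of admissible monomials of $P_4$ in \cite{su2}, and the four monomials of part (ii) with the restrictions coming from Lemmas \ref{bda72} and \ref{bda73}, to be sure the claimed identities are consistent with what is already established.
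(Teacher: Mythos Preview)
Your outline matches the paper's approach exactly: the paper proves Lemma~\ref{bda74} by exhibiting, for a few representative monomials (in part~(i) for $r=5$ with $w=x_1^{7}x_2^{7}x_3^{27}x_4^{24}$ and $w=x_1^{7}x_2^{15}x_3^{17}x_4^{26}$, and in part~(ii) for $x_1^{7}x_2^{15}x_3^{17}x_4^{30}x_5^{27}$), explicit identities of the form
\[
u = \sum_j v_j + Sq^1(\cdots)+Sq^2(\cdots)+Sq^4(\cdots)+Sq^8(\cdots)+Sq^{16}(\cdots)\ \ \mbox{mod}\big(P_5^-((4)|^2|(3)|^{3})\big),
\]
with each $v_j<u$, and then states that the remaining monomials are handled by a similar computation.

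The only shortfall in your proposal is that it remains a plan rather than a proof: you describe the shape of the required identities and the checks to be performed, but you do not actually produce any identity. For a lemma of this type the explicit Cartan-formula expansions \emph{are} the proof, and the paper supplies them (at least for the representative cases). To complete your argument you must write out at least one such identity in full for part~(i) and one for part~(ii), and verify the ordering of the surviving monomials; the heuristic you sketch (``lower the smallest odd exponents through $Sq^1$, then kill the resulting error terms successively'') is not by itself a guarantee that the process terminates with only smaller monomials, so the concrete identities are not optional.
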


\begin{proof} It is easy to see that each monomial in this lemma is of weight vector $(4)|^2(3)|^3$. We prove the lemma for some monomials. The others can be proved by a similar computation. 
	
We prove Part i) for $r = 5$ and $w = x_1^{7}x_2^{7}x_3^{27}x_4^{24},\, x_1^{7}x_2^{15}x_3^{17}x_4^{26}$. A direct computation using the Cartan formula gives  
\begin{align*}
&x_1^{7}x_2^{7}x_3^{27}x_4^{24}x_5^{31} = x_1^{4}x_2^{7}x_3^{27}x_4^{27}x_5^{31} + x_1^{4}x_2^{11}x_3^{27}x_4^{23}x_5^{31} + x_1^{5}x_2^{3}x_3^{27}x_4^{30}x_5^{31}\\ 
&\qquad + x_1^{5}x_2^{3}x_3^{30}x_4^{27}x_5^{31} + x_1^{5}x_2^{6}x_3^{27}x_4^{27}x_5^{31} + x_1^{5}x_2^{10}x_3^{27}x_4^{23}x_5^{31} + x_1^{5}x_2^{11}x_3^{19}x_4^{30}x_5^{31}\\ 
&\qquad + x_1^{5}x_2^{11}x_3^{22}x_4^{27}x_5^{31} + x_1^{5}x_2^{11}x_3^{26}x_4^{23}x_5^{31} + x_1^{7}x_2^{3}x_3^{25}x_4^{30}x_5^{31} + x_1^{7}x_2^{3}x_3^{30}x_4^{25}x_5^{31}\\ 
&\qquad + x_1^{7}x_2^{5}x_3^{26}x_4^{27}x_5^{31} + x_1^{7}x_2^{6}x_3^{27}x_4^{27}x_5^{31} + x_1^{7}x_2^{7}x_3^{24}x_4^{27}x_5^{31} + x_1^{7}x_2^{7}x_3^{26}x_4^{25}x_5^{31}\\ 
&\qquad +  Sq^1\big(x_1^{7}x_2^{5}x_3^{29}x_4^{23}x_5^{31} + x_1^{7}x_2^{7}x_3^{21}x_4^{29}x_5^{31} + x_1^{7}x_2^{7}x_3^{27}x_4^{23}x_5^{31}\big)\\ 
&\qquad + Sq^2\big(x_1^{7}x_2^{3}x_3^{15}x_4^{30}x_5^{39} + x_1^{7}x_2^{3}x_3^{15}x_4^{38}x_5^{31} + x_1^{7}x_2^{3}x_3^{22}x_4^{23}x_5^{39} + x_1^{7}x_2^{3}x_3^{23}x_4^{30}x_5^{31}\\ 
&\qquad + x_1^{7}x_2^{3}x_3^{30}x_4^{23}x_5^{31} + x_1^{7}x_2^{6}x_3^{27}x_4^{23}x_5^{31} + x_1^{7}x_2^{7}x_3^{19}x_4^{30}x_5^{31} + x_1^{7}x_2^{7}x_3^{22}x_4^{27}x_5^{31}\\ 
&\qquad + x_1^{7}x_2^{7}x_3^{26}x_4^{23}x_5^{31}\big) +  Sq^4\big(x_1^{4}x_2^{7}x_3^{27}x_4^{23}x_5^{31} + x_1^{5}x_2^{3}x_3^{23}x_4^{30}x_5^{31} + x_1^{5}x_2^{3}x_3^{30}x_4^{23}x_5^{31}\\ 
&\qquad + x_1^{5}x_2^{6}x_3^{27}x_4^{23}x_5^{31} + x_1^{5}x_2^{7}x_3^{19}x_4^{30}x_5^{31} + x_1^{5}x_2^{7}x_3^{22}x_4^{27}x_5^{31}\\ 
&\qquad + x_1^{5}x_2^{7}x_3^{26}x_4^{23}x_5^{31} + x_1^{11}x_2^{5}x_3^{15}x_4^{30}x_5^{31} + x_1^{11}x_2^{5}x_3^{22}x_4^{23}x_5^{31}\big)\\ 
&\qquad + Sq^8\big(x_1^{7}x_2^{5}x_3^{15}x_4^{30}x_5^{31} + x_1^{7}x_2^{5}x_3^{22}x_4^{23}x_5^{31}\big)\ \mbox{mod}\big(P_5^-((4)|^2|(3)|^{3}\big)\\
&x_1^{7}x_2^{15}x_3^{17}x_4^{26}x_5^{31} = x_1^{4}x_2^{15}x_3^{19}x_4^{27}x_5^{31} + x_1^{4}x_2^{19}x_3^{15}x_4^{27}x_5^{31} + x_1^{5}x_2^{15}x_3^{19}x_4^{26}x_5^{31}\\ 
&\qquad + x_1^{5}x_2^{19}x_3^{15}x_4^{26}x_5^{31} + x_1^{7}x_2^{8}x_3^{23}x_4^{27}x_5^{31} + x_1^{7}x_2^{9}x_3^{23}x_4^{26}x_5^{31}\\ 
&\qquad + x_1^{7}x_2^{15}x_3^{16}x_4^{27}x_5^{31} + Sq^1\big(x_1^{7}x_2^{15}x_3^{15}x_4^{27}x_5^{31}\big) +  Sq^2\big(x_1^{7}x_2^{15}x_3^{15}x_4^{26}x_5^{31}\big)\\ 
&\qquad +  Sq^4\big(x_1^{4}x_2^{15}x_3^{15}x_4^{27}x_5^{31} + x_1^{5}x_2^{15}x_3^{15}x_4^{26}x_5^{31} + Sq^8\big(x_1^{7}x_2^{8}x_3^{15}x_4^{27}x_5^{31}\\ 
&\qquad + x_1^{7}x_2^{9}x_3^{15}x_4^{26}x_5^{31}\big)\ \mbox{mod}\big(P_5^-((4)|^2|(3)|^{3}\big).
\end{align*}
Hence, the monomials $x_1^{7}x_2^{7}x_3^{27}x_4^{24}x_5^{31},\, x_1^{7}x_2^{15}x_3^{17}x_4^{26}x_5^{31}$ are strictly inadmissible.
	
We prove Part ii) for $x_1^{7}x_2^{15}x_3^{17}x_4^{30}x_5^{27}$. We have 
\begin{align*}
&x_1^{7}x_2^{15}x_3^{17}x_4^{30}x_5^{27} = x_1^{5}x_2^{15}x_3^{19}x_4^{27}x_5^{30} + x_1^{5}x_2^{15}x_3^{19}x_4^{30}x_5^{27} + x_1^{5}x_2^{19}x_3^{15}x_4^{27}x_5^{30}\\ 
&\qquad + x_1^{5}x_2^{19}x_3^{15}x_4^{30}x_5^{27} + x_1^{7}x_2^{9}x_3^{23}x_4^{27}x_5^{30} + x_1^{7}x_2^{9}x_3^{23}x_4^{30}x_5^{27}\\ 
&\qquad + x_1^{7}x_2^{15}x_3^{17}x_4^{27}x_5^{30} +  Sq^1\big(x_1^{7}x_2^{15}x_3^{15}x_4^{29}x_5^{29}\big) +  Sq^2\big(x_1^{7}x_2^{15}x_3^{15}x_4^{27}x_5^{30}\\ 
&\qquad + x_1^{7}x_2^{15}x_3^{15}x_4^{30}x_5^{27}\big) +  Sq^4\big(x_1^{5}x_2^{15}x_3^{15}x_4^{27}x_5^{30} + x_1^{5}x_2^{15}x_3^{15}x_4^{30}x_5^{27}\big)\\ 
&\qquad +  Sq^8\big(x_1^{7}x_2^{9}x_3^{15}x_4^{27}x_5^{30} + x_1^{7}x_2^{9}x_3^{15}x_4^{30}x_5^{27}\big)\ \mbox{mod}\big(P_5^-((4)|^2|(3)|^{3}\big).
\end{align*}
This equality implies that the monomial $x_1^{7}x_2^{15}x_3^{17}x_4^{30}x_5^{27}$ is strictly inadmissible.
\end{proof}

\begin{lems}\label{bda75}The following monomials are strictly inadmissible:
	
\medskip
\centerline{\begin{tabular}{llll} 
$x_1^{7}x_2^{15}x_3^{55}x_4^{59}x_5^{56}$& $x_1^{15}x_2^{7}x_3^{55}x_4^{59}x_5^{56}$& $x_1^{15}x_2^{15}x_3^{48}x_4^{55}x_5^{59}$& $x_1^{15}x_2^{15}x_3^{55}x_4^{48}x_5^{59} $\cr  $x_1^{15}x_2^{15}x_3^{55}x_4^{49}x_5^{58}$& $x_1^{15}x_2^{15}x_3^{55}x_4^{59}x_5^{48}$& $x_1^{15}x_2^{31}x_3^{35}x_4^{53}x_5^{58}$& $x_1^{15}x_2^{55}x_3^{7}x_4^{59}x_5^{56} $\cr  $x_1^{31}x_2^{15}x_3^{35}x_4^{53}x_5^{58}$& $x_1^{31}x_2^{31}x_3^{35}x_4^{37}x_5^{58}$.& &\cr
\end{tabular}}
\end{lems}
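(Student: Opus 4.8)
The plan is to argue exactly as in the proofs of Lemmas \ref{bda72}, \ref{bda73} and \ref{bda74}. First I would record that every one of the ten monomials in the statement has degree $192$ and weight vector $\omega := (4)|^2|(3)|^4 = (4,4,3,3,3,3)$, so that $s := \max\{i : \omega_i > 0\} = 6$. Consequently, to show that one of them, say $x$, is strictly inadmissible it suffices to produce monomials $v_1,\dots,v_r$ with $v_j < x$ for $j = 1,\dots,r$ (the order being that of Definition \ref{defn3}) together with polynomials $h_1,\dots,h_6 \in P_5$ satisfying
\[
x + \sum_{j=1}^{r} v_j = Sq^1(h_1) + Sq^2(h_2) + Sq^4(h_3) + Sq^8(h_4) + Sq^{16}(h_5) + Sq^{32}(h_6)
\]
modulo $P_5^-(\omega)$; indeed, the monomials of $P_5^-(\omega)$ all have weight vector strictly less than $\omega$, hence are themselves $< x$, so such an identity yields $x + (\text{monomials} < x) \in \mathcal A_5^+ P_5 = \mathcal A_{s-1}^+ P_5$, which is precisely strict inadmissibility.

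The substance of the proof is to exhibit, for each of the ten monomials, one such explicit identity, and I would construct it by the device used throughout the proof of Lemma \ref{bda74}: each $h_a$ is a copy of $x$ (or a small perturbation of it) in which one exponent has been lowered so that the dominant term of the Cartan expansion of $Sq^{2^a}(h_a)$ is a prescribed nearly maximal monomial, and one then sorts the remaining terms of each Cartan expansion into (i) monomials strictly smaller than $x$, (ii) monomials of weight vector strictly below $\omega$, i.e.\ elements of $P_5^-(\omega)$, and (iii) monomials occurring an even number of times among the six pieces, which cancel. Adding in the subdominant surviving terms as the $v_j$'s, the whole right-hand side collapses to $x$ modulo $P_5^-(\omega)$. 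As usual it is enough to carry out the calculation in detail for one or two representatives --- say $x_1^{7}x_2^{15}x_3^{55}x_4^{59}x_5^{56}$ and $x_1^{31}x_2^{31}x_3^{35}x_4^{37}x_5^{58}$ --- and to remark that the remaining eight are handled by an entirely analogous, if longer, computation. Since all the monomials in play carry the same weight vector $\omega$, checking $v_j < x$ amounts in each instance to a left-lexicographic comparison of exponent vectors, and one may use operations up to $Sq^{32}$, which is legitimate because $s = 6$.

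The step I expect to be the real obstacle is the simultaneous choice of the $h_a$ and of the correction monomials $v_j$. Applying the Cartan formula is routine, but with exponents as large as $59$ each $Sq^{2^a}(h_a)$ expands into many monomials, and one must arrange that all the oversized terms --- those whose weight vector strictly exceeds $\omega$ (for instance the leading term $x_i^4 x_j^7 x_t^7 \cdots$ of a nested $Sq$, whose $\omega_3$-coordinate jumps above $3$), and those that retain the weight vector $\omega$ but whose exponent vector overtakes $\sigma(x)$ (for instance when a factor $x_j^{2^a}$ is promoted to $x_j^{2^{a+1}}$ in the wrong slot) --- occur an even number of times and cancel, while every genuinely new surviving term is strictly below $x$. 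Most of the Cartan terms land harmlessly in $P_5^-(\omega)$, since a single $Sq^1$ or $Sq^3$ hitting an odd exponent annihilates one of the $\omega_i$; it is the interplay among the few non-vanishing near-top terms of the six pieces that requires care, and choosing the $h_a$ so that exactly these cancel pairwise is the heart of the argument. Once they are found, verifying the displayed identities is mechanical, and the strict inadmissibility of all ten monomials then follows immediately from the definition.
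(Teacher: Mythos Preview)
Your plan is correct and matches the paper's approach essentially line for line: the paper also observes that all ten monomials have weight vector $(4)|^2|(3)|^4$, treats two representatives explicitly---in fact the very same two you name, $u = x_1^{7}x_2^{15}x_3^{55}x_4^{59}x_5^{56}$ and $v = x_1^{31}x_2^{31}x_3^{35}x_4^{37}x_5^{58}$---and exhibits for each an identity of the form $x = (\text{smaller monomials}) + \sum Sq^{2^a}(h_a)$ mod $P_5^-(\omega)$, declaring the remaining eight to be analogous. The only minor difference is that the paper's identities use only $Sq^1,\dots,Sq^{16}$ and never invoke $Sq^{32}$, so in practice the cancellation bookkeeping you anticipate is slightly lighter than your worst case.
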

\begin{proof} Observe that all monomials in this lemma are of weight vector $(4)|^2(3)|^4$. We prove the lemma for $u = x_1^{7}x_2^{15}x_3^{55}x_4^{59}x_5^{56}$ and $v = x_1^{31}x_2^{31}x_3^{35}x_4^{37}x_5^{58}$. By a direct computation using the Cartan we have  
\begin{align*}
&u = x_1^{4}x_2^{15}x_3^{55}x_4^{59}x_5^{59} + x_1^{4}x_2^{15}x_3^{59}x_4^{55}x_5^{59} + x_1^{5}x_2^{15}x_3^{51}x_4^{59}x_5^{62} + x_1^{5}x_2^{15}x_3^{51}x_4^{62}x_5^{59}\\ 
&\quad + x_1^{5}x_2^{15}x_3^{54}x_4^{59}x_5^{59} + x_1^{5}x_2^{15}x_3^{58}x_4^{55}x_5^{59} + x_1^{5}x_2^{15}x_3^{59}x_4^{55}x_5^{58} + x_1^{5}x_2^{15}x_3^{59}x_4^{59}x_5^{54}\\ 
&\quad + x_1^{5}x_2^{15}x_3^{59}x_4^{62}x_5^{51} + x_1^{7}x_2^{11}x_3^{53}x_4^{62}x_5^{59} + x_1^{7}x_2^{11}x_3^{57}x_4^{62}x_5^{55} + x_1^{7}x_2^{15}x_3^{49}x_4^{62}x_5^{59}\\ 
&\quad + x_1^{7}x_2^{15}x_3^{51}x_4^{57}x_5^{62} + x_1^{7}x_2^{15}x_3^{51}x_4^{62}x_5^{57} + x_1^{7}x_2^{15}x_3^{53}x_4^{59}x_5^{58} + x_1^{7}x_2^{15}x_3^{54}x_4^{57}x_5^{59}\\ 
&\quad + x_1^{7}x_2^{15}x_3^{55}x_4^{56}x_5^{59} + x_1^{7}x_2^{15}x_3^{55}x_4^{57}x_5^{58} +  Sq^1\big(x_1^{7}x_2^{15}x_3^{53}x_4^{55}x_5^{61} + x_1^{7}x_2^{15}x_3^{55}x_4^{55}x_5^{59}\\ 
&\quad + x_1^{7}x_2^{15}x_3^{55}x_4^{61}x_5^{53}\big) +  Sq^2\big(x_1^{7}x_2^{15}x_3^{43}x_4^{70}x_5^{55} + x_1^{7}x_2^{15}x_3^{43}x_4^{78}x_5^{47}\\ 
&\quad + x_1^{7}x_2^{15}x_3^{51}x_4^{55}x_5^{62} + x_1^{7}x_2^{15}x_3^{51}x_4^{62}x_5^{55} + x_1^{7}x_2^{15}x_3^{51}x_4^{70}x_5^{47} + x_1^{7}x_2^{15}x_3^{54}x_4^{55}x_5^{59}\\ 
&\quad + x_1^{7}x_2^{15}x_3^{55}x_4^{55}x_5^{58} + x_1^{7}x_2^{15}x_3^{55}x_4^{59}x_5^{54} + x_1^{7}x_2^{15}x_3^{55}x_4^{62}x_5^{51} + x_1^{7}x_2^{23}x_3^{51}x_4^{55}x_5^{54}\big)\\ 
&\quad +  Sq^4\big(x_1^{4}x_2^{15}x_3^{55}x_4^{55}x_5^{59} + x_1^{5}x_2^{15}x_3^{43}x_4^{78}x_5^{47} + x_1^{5}x_2^{15}x_3^{51}x_4^{55}x_5^{62} + x_1^{5}x_2^{15}x_3^{51}x_4^{62}x_5^{55}\\ 
&\quad + x_1^{5}x_2^{15}x_3^{54}x_4^{55}x_5^{59} + x_1^{5}x_2^{15}x_3^{55}x_4^{55}x_5^{58} + x_1^{5}x_2^{15}x_3^{55}x_4^{59}x_5^{54} + x_1^{5}x_2^{15}x_3^{55}x_4^{62}x_5^{51}\\ 
&\quad + x_1^{11}x_2^{15}x_3^{45}x_4^{62}x_5^{55} + x_1^{11}x_2^{15}x_3^{53}x_4^{55}x_5^{54} + x_1^{11}x_2^{15}x_3^{53}x_4^{62}x_5^{47}\big)\\ 
&\quad +  Sq^8\big(x_1^{7}x_2^{11}x_3^{45}x_4^{62}x_5^{59} + x_1^{7}x_2^{11}x_3^{57}x_4^{62}x_5^{47} + x_1^{7}x_2^{15}x_3^{53}x_4^{55}x_5^{54} + x_1^{7}x_2^{23}x_3^{45}x_4^{62}x_5^{47}\big)\\ 
&\quad +  Sq^{16}\big(x_1^{7}x_2^{15}x_3^{45}x_4^{62}x_5^{47}\big)\ \mbox{mod}\big(P_5^-((4)|^2|(3)|^{4}\big),\\
&v = x_1^{16}x_2^{31}x_3^{47}x_4^{39}x_5^{59} + x_1^{16}x_2^{47}x_3^{31}x_4^{39}x_5^{59} + x_1^{17}x_2^{31}x_3^{47}x_4^{39}x_5^{58} + x_1^{17}x_2^{47}x_3^{31}x_4^{39}x_5^{58}\\ 
&\quad + x_1^{19}x_2^{31}x_3^{47}x_4^{36}x_5^{59} + x_1^{19}x_2^{31}x_3^{47}x_4^{37}x_5^{58} + x_1^{19}x_2^{47}x_3^{31}x_4^{36}x_5^{59} + x_1^{19}x_2^{47}x_3^{31}x_4^{37}x_5^{58}\\ 
&\quad + x_1^{23}x_2^{24}x_3^{47}x_4^{39}x_5^{59} + x_1^{23}x_2^{25}x_3^{47}x_4^{39}x_5^{58} + x_1^{23}x_2^{27}x_3^{47}x_4^{36}x_5^{59} + x_1^{23}x_2^{27}x_3^{47}x_4^{37}x_5^{58}\\ 
&\quad + x_1^{23}x_2^{40}x_3^{31}x_4^{39}x_5^{59} + x_1^{23}x_2^{41}x_3^{31}x_4^{39}x_5^{58} + x_1^{23}x_2^{43}x_3^{31}x_4^{36}x_5^{59} + x_1^{23}x_2^{43}x_3^{31}x_4^{37}x_5^{58}\\ 
&\quad + x_1^{31}x_2^{24}x_3^{39}x_4^{39}x_5^{59} + x_1^{31}x_2^{25}x_3^{39}x_4^{39}x_5^{58} + x_1^{31}x_2^{27}x_3^{39}x_4^{36}x_5^{59} + x_1^{31}x_2^{27}x_3^{39}x_4^{37}x_5^{58}\\ 
&\quad + x_1^{31}x_2^{31}x_3^{32}x_4^{39}x_5^{59} + x_1^{31}x_2^{31}x_3^{33}x_4^{39}x_5^{58} + x_1^{31}x_2^{31}x_3^{35}x_4^{36}x_5^{59}\\ 
&\quad + Sq^1\big(x_1^{31}x_2^{31}x_3^{31}x_4^{39}x_5^{59}\big) +  Sq^2\big(x_1^{31}x_2^{31}x_3^{31}x_4^{39}x_5^{58}\big) +  Sq^4\big(x_1^{31}x_2^{31}x_3^{31}x_4^{36}x_5^{59}\\ 
&\quad + x_1^{31}x_2^{31}x_3^{31}x_4^{37}x_5^{58}\big) +  Sq^8\big(x_1^{31}x_2^{24}x_3^{31}x_4^{39}x_5^{59} + x_1^{31}x_2^{25}x_3^{31}x_4^{39}x_5^{58}\\ 
&\quad + x_1^{31}x_2^{27}x_3^{31}x_4^{36}x_5^{59} + x_1^{31}x_2^{27}x_3^{31}x_4^{37}x_5^{58}\big) +  Sq^{16}\big(x_1^{16}x_2^{31}x_3^{31}x_4^{39}x_5^{59}\\ 
&\quad + x_1^{17}x_2^{31}x_3^{31}x_4^{39}x_5^{58} + x_1^{19}x_2^{31}x_3^{31}x_4^{36}x_5^{59} + x_1^{19}x_2^{31}x_3^{31}x_4^{37}x_5^{58} + x_1^{23}x_2^{24}x_3^{31}x_4^{39}x_5^{59}\\ 
&\quad + x_1^{23}x_2^{25}x_3^{31}x_4^{39}x_5^{58} + x_1^{23}x_2^{27}x_3^{31}x_4^{36}x_5^{59} + x_1^{23}x_2^{27}x_3^{31}x_4^{37}x_5^{58}\big)\ \mbox{mod}\big(P_5^-((4)|^2|(3)|^{4}\big).
\end{align*}
Thus, the monomials $u, v$ are strictly inadmissible.
\end{proof}
\begin{proof}[Proof of Proposition $\ref{mdd72}$] Let $\bar A(d)$ and $\bar B(d)$ be as in Subsection \ref{s52} with $d \geqslant 8$ and let $x \in P_5^+(\bar\omega)$ be an admissible monomial with $\bar\omega := (4)|^2|(3)|^{d-4}$. Then $x = X_ry^2$ with $1 \leqslant r \leqslant 5$ and $y$ an admissible monomial of weight vector $(4)|(3)|^{d-4}$. By a direct computation we see that if $x \notin \bar A(d)\cup \bar B(d)$, then there is a monomial $w$ as given in one of Lemmas \ref{bda72}, \ref{bda73}, \ref{bda74} and \ref{bda75} such that $x = wz_1^{2^u}$ with $u$ nonnegative integers, $2\leqslant u \leqslant 6$, and $z_1$ a monomial of weight vector $(3)|^{d-u}$. By Theorem \ref{dlcb1}, $x$ is inadmissible. This contradicts the fact that $x$ is admissible. Hence, $B_5^+(\bar\omega) \subset \bar A(d)\cup \bar B(d)$.
	
Now we prove that the set $[\bar A(d)\cup \bar B(d)]_{\bar\omega}$ is linearly independent in $QP_5(\bar\omega)$.
	
Consider the subspaces $\langle [\bar A(d)]_{\bar\omega}\rangle \subset QP_5(\bar\omega)$ and $\langle [\bar B(d)]_{\bar\omega}\rangle \subset QP_5(\bar\omega)$. It is easy to see that for any $x\in \bar A(d)$, we have $x = x_i^{2^{d-2}-1}\theta_{J_i}(y)$ with $y$ an admissible monomial of weight vector $(3)|^2|(2)|^{d-4}$ in $P_4$. By Proposition \ref{mdmo}, $x$ is admissible. This implies $\dim \langle [\bar A(d)]_{\bar\omega}\rangle = 275$. Since $\nu(x) = 2^{d-2}-1$ for all $x\in \bar A(d)$ and $\nu(x) < 2^{d-2}-1$ for all $x\in \bar B(d)$, we obtain $\langle [\bar A(d)]_{\bar\omega}\rangle \cap \langle [\bar B(d)]_{\bar\omega}\rangle = \{0\}$. Hence, we need only to prove the set $[\bar B(d)]_{\bar\omega}=\{[\bar b_{d,t}]_{\bar\omega}: 1 \leqslant t \leqslant 115\}$ is linearly independent in $QP_5(\bar\omega)$, where the monomials $\bar b_{d,t}: 1 \leqslant t \leqslant 115$, are determined as in Subsection \ref{s52}. 
	
Suppose there is a linear relation
\begin{equation}\label{ctd712}
\mathcal S:= \sum_{1\leqslant t \leqslant 115}\gamma_t\bar b_{d,t} \equiv_{\bar\omega} 0,
\end{equation}
where $\gamma_t \in \mathbb F_2$. We denote $\gamma_{\mathbb J} = \sum_{t \in \mathbb J}\gamma_t$ for any $\mathbb J \subset \{t\in \mathbb N:1\leqslant t \leqslant 115\}$.
	
Let $w_u = w_{d,u},\, 1\leqslant u \leqslant 15$, be as in Subsection \ref{s52} and the homomorphism $p_{(i;I)}:P_5\to P_4$ which is defined by \eqref{ct23} for $k=5$. From Lemma \ref{bdm}, we see that $p_{(i;I)}$ passes to a homomorphism from $QP_5(\bar\omega)$ to $QP_4(\bar\omega)$. By applying $p_{(i;j)}$, $1\leqslant i < j \leqslant 5,$ to (\ref{ctd712}), we obtain
\begin{align*}
&p_{(1;2)}(S) \equiv_{\bar\omega} \gamma_{8}w_{14} +  \gamma_{4}w_{15}    \equiv_{\bar\omega} 0,\\
&p_{(1;3)}(S) \equiv_{\bar\omega} \gamma_{3}w_{13} +  \gamma_{5}w_{15}    \equiv_{\bar\omega} 0,\\
&p_{(1;4)}(S) \equiv_{\bar\omega} \gamma_{\{2,53\}}w_{12} +  \gamma_{6}w_{15} \equiv_{\bar\omega} 0,\\
&p_{(1;5)}(S) \equiv_{\bar\omega} \gamma_{\{1,18\}}w_{11} +  \gamma_{7}w_{15} \equiv_{\bar\omega} 0,\\
&p_{(2;3)}(S) \equiv_{\bar\omega} \gamma_{75}w_{13} +  \gamma_{105}w_{15} \equiv_{\bar\omega} 0,\\
&p_{(2;4)}(S) \equiv_{\bar\omega} \gamma_{\{74,86\}}w_{12} +  \gamma_{106}w_{15} \equiv_{\bar\omega} 0,\\
&p_{(2;5)}(S) \equiv_{\bar\omega} \gamma_{\{73,79\}}w_{11} +  \gamma_{107}w_{15}  \equiv_{\bar\omega} 0,\\
&p_{(3;4)}(S) \equiv_{\bar\omega} \gamma_{\{94,98\}}w_{12} +  \gamma_{112}w_{15}  \equiv_{\bar\omega} 0,\\
&p_{(3;5)}(S) \equiv_{\bar\omega} \gamma_{\{93,96\}}w_{11} +  \gamma_{113}w_{15}  \equiv_{\bar\omega} 0,\\
&p_{(4;5)}(S) \equiv_{\bar\omega} \gamma_{\{101,102,103,104\}}w_{11} +  \gamma_{115}w_{15} \equiv_{\bar\omega} 0.
\end{align*}
	
From the above equalities, we get
\begin{equation}\label{c71}
\gamma_j = 0,\ j \in \mathbb J_5 \mbox{ and } \gamma_s = \gamma_t,\ (s,t) \in \mathbb K_5,
\end{equation}
where $\mathbb J_5 = \{$3, 4, 5, 6, 7, 8, 75, 105, 106, 107, 112, 113, 115$\}$ and $\mathbb K_5 = \{$(1,18), (2,53), (73,79), (74,86), (93,96), (94,98)$\}$.
	
By applying the homomorphism $p_{(1;(u,v))}$, $2\leqslant u < v \leqslant 5,$ to \eqref{ctd712} and using \eqref{c71}, we obtain
\begin{align*}
p_{(1;(2,3))}(S) &\equiv_{\bar\omega} \gamma_{33}w_{10} +  \gamma_{14}w_{13} +   \gamma_{30}w_{14} +  \gamma_{22}w_{15}   \equiv_{\bar\omega} 0,\\
p_{(1;(2,4))}(S) &\equiv_{\bar\omega} \gamma_{\{29,94\}}w_{9} +  \gamma_{13}w_{12} +  \gamma_{31}w_{14} +  \gamma_{23}w_{15}  \equiv_{\bar\omega} 0,\\
p_{(1;(3,4))}(S) &\equiv_{\bar\omega} \gamma_{\{11,38,74\}}w_{7} +  \gamma_{\{16,89\}}w_{12} +  \gamma_{\{20,89\}}w_{13} +  \gamma_{25}w_{15}  \equiv_{\bar\omega} 0,\\
p_{(1;(2,5))}(S) &\equiv_{\bar\omega} \gamma_{\{28,64\}}w_{8} +  \gamma_{\{12,42,46\}}w_{11} +  \gamma_{32}w_{14} +  \gamma_{24}w_{15}   \equiv_{\bar\omega} 0,\\
p_{(1;(3,5))}(S) &\equiv_{\bar\omega} \gamma_{\{10,37,40,49,71,84\}}w_{6} +  \gamma_{\{15,51,57,67,90\}}w_{11}\\ 
&\hskip4cm +  \gamma_{\{21,90\}}w_{13} +  \gamma_{26}w_{15} \equiv_{\bar\omega} 0,\\
p_{(1;(4,5))}(S) &\equiv_{\bar\omega} \gamma_{\{9,39,40,41,45,46,47,50,51,52,63,64,65,88,91,92,100\}}w_{5} +  \gamma_{27}w_{15}\\ 
&\quad  +  \gamma_{\{2,17,54,55,58,59,68,101,102,103\}}w_{11} +  \gamma_{\{19,54,104\}}w_{12}  \equiv_{\bar\omega} 0.
\end{align*}
	
These equalities imply
\begin{equation}\label{c73}
\gamma_j = 0,\ j \in \mathbb J_6 \mbox{ and } \gamma_s = \gamma_t,\ (s,t) \in \mathbb K_6,
\end{equation}
where $\mathbb J_6 = \{$13, 14, 22, 23, 24, 25, 26, 27, 30, 31, 32, 33$\}$ and $\mathbb K_6 = \{$(16,20), (16,89), (21,90), (28,64), (29,94)$\}$.
	
By applying the homomorphism $p_{(i;(u,v))}$, $2\leqslant i< u < v \leqslant 5,$ to \eqref{ctd712} and using \eqref{c71}, \eqref{c73}, we have
\begin{align*}
p_{(2;(3,4))}(S) &\equiv_{\bar\omega} \gamma_{\{2,36,38,44,48,56,62,66,70\}}w_{7}\\ 
&\hskip2cm  +  \gamma_{\{16,77\}}w_{12} +   \gamma_{\{16,81\}}w_{13} +  \gamma_{108}w_{15} \equiv_{\bar\omega} 0,\\
p_{(2;(3,5))}(S) &\equiv_{\bar\omega}   \gamma_{\{28,35,37,40,42,43,46,51,61\}}w_{6}\\ 
&\hskip2cm +  \gamma_{\{21,76,84\}}w_{11} +  \gamma_{\{21,82\}}w_{13} +  \gamma_{109}w_{15}   \equiv_{\bar\omega} 0,\\
p_{(2;(4,5))}(S) &\equiv_{\bar\omega} \gamma_{\{34,39,40,41,55,83,84,85,91\}}w_{5} +  \gamma_{\{74,78,87,88,101,102,103,111\}}w_{11}\\ 
&\hskip2cm +  \gamma_{\{80,87,104\}}w_{12} +  \gamma_{110}w_{15}    \equiv_{\bar\omega} 0,\\
p_{(3;(4,5))}(S) &\equiv_{\bar\omega} \gamma_{\{28,60,61,62,63,65,66,67,68,69,70,71,72\}}w_{5} +  \gamma_{\{29,95,99,101,102\}}w_{11}\\ 
&\hskip2cm +  \gamma_{\{97,99,100,103,104\}}w_{12} +  \gamma_{114}w_{15}  \equiv_{\bar\omega} 0.
\end{align*}
The above equalities imply
\begin{equation}\label{c74}
\gamma_j = 0,\ j \in \{108,\, 109,\, 110,\, 114\},\ \gamma_{81} = \gamma_{77} = \gamma_{16},\, \gamma_{82} = \gamma_{21}.
\end{equation}
By applying the homomorphisms $p_{(1;(2,3,4))}$, $p_{(1;(2,3,5))}$ to \eqref{ctd612} and using \eqref{c71}, \eqref{c73}, \eqref{c74}, we obtain
\begin{align*}
p_{(1;(2,3,4))}(S) &\equiv_{\bar\omega} \gamma_{72}w_{4} +  \gamma_{\{11,36,38\}}w_{7} +   \gamma_{\{29,62\}}w_{9} +  \gamma_{70}w_{10}\\ &\qquad +  \gamma_{\{16,44\}}w_{12} +  \gamma_{\{16,48\}}w_{13} +  \gamma_{66}w_{14} +  \gamma_{56}w_{15} \equiv_{\bar\omega} 0,\\
p_{(1;(2,3,5))}(S) &\equiv_{\bar\omega} \gamma_{\{69,102\}}w_{3} +  \gamma_{\{10,21,35,37,40,49,71,76,84\}}w_{6} +   \gamma_{61}w_{8} \\ 
&\qquad +  \gamma_{71}w_{10} +  \gamma_{\{12,15,21,42,43,46,51,57,67,76,84\}}w_{11}\\ &\qquad +  \gamma_{\{21,49\}}w_{13} +  \gamma_{67}w_{14} +  \gamma_{57}w_{15} \equiv_{\bar\omega} 0.
\end{align*}
A direct computation from the above equalities shows
\begin{equation}\label{c75}
\begin{cases}
\gamma_j = 0,\ j \in \{56, 57, 61, 66, 67, 70, 71, 72\},\\ \gamma_{s} = \gamma_{t},\ (s,t) \in \{(16,44), (16,48), (21,49), (29,62), (69,102)\}.
\end{cases}
\end{equation}
Applying the homomorphisms $p_{(1;(2,4,5))}$, $p_{(1;(3,4,5))}$ and $p_{(2;(3,4,5))}$ to \eqref{ctd712} and using \eqref{c71}-\eqref{c75}, we have
\begin{align*}
&p_{(1;(2,4,5))}(S) \equiv_{\bar\omega} \gamma_{\{60,93,95,97\}}w_{2} +   \gamma_{\{29,63,69,99,100,101,103\}}w_{8} +  \gamma_{\{65,99,104\}}w_{9}\\
&\quad +  \gamma_{\{9,28,34,39,40,41,42,45,46,47,50,51,52,63,65,74,78,80,83,84,85,88,91,92,100,111\}}w_{5}\\
&\quad  +  \gamma_{\{2,12,17,42,45,46,54,55,58,59,68,69,74,78,87,88,92,101,103,111\}}w_{11}\\
&\quad +  \gamma_{\{19,47,54,80,87,104\}}w_{12} +  \gamma_{68}w_{14} +  \gamma_{58}w_{15} \equiv_{\bar\omega} 0,\\
&p_{(1;(3,4,5))}(S) \equiv_{\bar\omega} \gamma_{\{21,34,35,36,73,76,78,80\}}w_{1} +  \gamma_{\{11,38,41,74,85,87,88,104,111\}}w_{7}\\ 
&\quad +  \gamma_{\{9,21,28,37,38,39,40,41,43,45,46,47,50,51,52,60,63,65,83,84,85,88,91,92,95,97,100,111\}}w_{5}\\ 
&\quad +   \gamma_{\{10,16,21,37,39,40,74,83,84,87,101\}}w_{6}+  \gamma_{\{19,21,52,54,91,92,97,99,100,104\}}w_{12} \\ 
&\quad +  \gamma_{\{2,15,16,17,21,29,50,51,54,55,58,59,68,69,91,95,99,101,103\}}w_{11}\\ &\quad+  \gamma_{\{55,91,103\}}w_{13} +  \gamma_{59}w_{15}   \equiv_{\bar\omega} 0,\\
&p_{(2;(3,4,5))}(S) \equiv_{\bar\omega}   \gamma_{\{16,21,29,34,37,38,39,40,41,50,51,52,55,59,60,83,84,85,91,95,97\}}w_{5}\\ 
&\quad + \gamma_{\{1,9,10,11,12,15,17,19,21,28,29\}}w_{1} +   \gamma_{\{2,28,35,37,39,40,42,43,45,46,50,51,54,63,101\}}w_{6}\\ 
&\quad +  \gamma_{\{2,21,29,36,38,41,47,52,54,55,58,59,65,68,104\}}w_{7} + \gamma_{\{21,80,85,87,91,97,99,104\}}w_{12}\\ 
&\quad +  \gamma_{\{16,21,29,69,74,76,78,83,84,87,88,91,92,95,99,100,101,103,111\}}w_{11}\\ 
&\quad +  \gamma_{\{88,91,92,100,103\}}w_{13} +  \gamma_{111}w_{15}  \equiv_{\bar\omega} 0.
\end{align*}
By a direct computation using the above equalities we get 
\begin{equation}\label{c76}
\gamma_j = 0,\ j \in \{58, 59, 68, 111\}
\end{equation}
Now, by applying the homomorphism $p_{(1;(2,3,4,5))}$ to \eqref{ctd612} and using \eqref{c71}-\eqref{c76}, we get
\begin{align*}
&p_{(1;(2,3,4,5))} \equiv_{\bar\omega}	\gamma_{\{21,34,35,36,74,76,78,80\}}w_{1} +  \gamma_{\{29,60,95,97\}}w_{2} +   \gamma_{101}w_{3} +  \gamma_{104}w_{4}\\ 
&\ +  \gamma_{\{9,16,21,28,34,37,38,39,40,41,42,43,45,46,47,50,51,52,60,63,65,74,76,78,80,88,91,92,95,97,100\}}w_{5}\\ 
&\ +  \gamma_{\{10,35,37,39,40,74,76,78,83,84,87,101\}}w_{6} +  \gamma_{\{11,21,36,38,41,80,85,87,88,104\}}w_{7}\\ 
&\ +  \gamma_{\{29,63,69,95,99,100,101,103\}}w_{8} +  \gamma_{\{65,97,99,104\}}w_{9} +  \gamma_{103}w_{10} +  \gamma_{\{55,88,91,103\}}w_{13}\\ 
&\ +  \gamma_{\{2,12,15,16,17,21,29,42,43,45,46,50,51,54,55,69,74,76,78,83,84,87,88,91,92,95,99,101,103\}}w_{11}\\ 
&\ +  \gamma_{\{19,21,47,52,54,80,85,87,91,92,97,99,100,104\}}w_{12} +  \gamma_{100}w_{14} +  \gamma_{92}w_{15} \equiv_{\bar\omega} 0.
\end{align*}
By computing from the above equalities we obtain $\gamma_t =0$ for all $t, \ 1 \leqslant t \leqslant 115$. The proposition is proved.
\end{proof}

%===========================
\section{Appendix 1}\label{sect5}

In this section, we list all needed admissible monomials in $P_5$ which are used in the proofs of the main results.

\subsection{The admissible monomials of weight vector $(4)|(2)|^{d-2}$ in $P_5$}\label{s51}\

\medskip
For any $d \geqslant 6$,
$B_4^+((4)|(2)|^{d-2}) = \{v_{d,t}: 1 \leqslant t \leqslant 35 \}$, where

\medskip
\centerline{% [inline block 0: 12 envs, 33097 chars -> data_tex | \begin{tabular}{lll} $v_{1} = x_1x_2x_3^{2^{d-2}-1}x_4^{2^{d-2}-1}$ &$v_{2} = x_1x_2^{3}x_3^{2^{d-2}-3}x_4^{2^{d-2}-1}$\...]
}

\medskip
Thus, we obtain $|B_5((4)|^2|(3)|^{d-4})| = 75 + 275 + 115 = 465$.

\subsection{The admissible monomials of weight vector $(4)|^2|(3)|^{d-4}|(1)$ in $P_5$}\label{s53}\

\medskip
We observe that for any $x \in |B_5((4)|^2|(3)|^{d-4})$ there exists unique $J_x = (j_1^x,j_2^x,j_3^x)$ such that $1 \leqslant j_1^x < j_2^x < j_3^x \leqslant 5$ and $\nu_{j_t^x}(x) > 32$ for $t = 1,\, 2,\, 3$. Then we have
$$B_5((4)|^2|(3)|^{d-4}|(1)) = \left\{x\theta_{J_x}\big(x_s^{2^{d-2}}\big): x \in B_5((4)|^2|(3)|^{d-4}),\, s = 1,\, 2,\, 3 \right\}.$$

Thus, we obtain $|B_5((4)|^2|(3)|^{d-4}|(1))| = 1395$ and 
$$|B_5(2^d)| = 124 + 465 + 1395 = 1984,$$ 
for any $d \geqslant 8$.

%===========================
\section{Appendix 2}\label{sect6}
\setcounter{equation}{1}

In this section we present some computations of $(QP_5)_{2^d}$ for $5\leqslant d \leqslant 7$.

From Lemma \ref{bdm}, it implies that if $\omega$ is a weight vector of degree $n$ and $x \in P_k(\omega)$, then $p_{(i;I)}(x) \in P_{k-1}(\omega)$. Moreover, $p_{(i;I)}$ passes to a homomorphism  
\begin{align*}
	p_{(i;I)}^{(\omega)} :QP_k(\omega)\longrightarrow QP_{k-1}(\omega).
\end{align*} 
We set 
\begin{align*}\widetilde {SF}_k(\omega) &= \bigcap_{(i;I)\in \mathcal N_k}  \mbox{Ker}(p_{(i;I)}^{(\omega)})\subset \widetilde{QP}_k^+(\omega)\\ \widetilde{QP}_k(\omega) &= QP_k(\omega)/\widetilde {SF}_k(\omega),\ \widetilde{QP}_k^+(\omega) = QP_k^+(\omega)/\widetilde {SF}_k(\omega).
\end{align*}

We see that $\widetilde {SF}_k(\omega)$ is a subspace of the spike-free module $SF^{\omega}_k$ (see Walker and Wood \cite[Chap. 30]{wa2}). Then we have
$$QP_k(\omega) \cong \widetilde{QP}_k(\omega) \bigoplus \widetilde {SF}_k(\omega).$$

\subsection{A note on the structure of the space $(QP_5)_{32}$}\

\medskip
By Lemma \ref{bdd5}, we have
$$ (QP_5)_{32} \cong QP_5((2)|(1)|^4)\bigoplus QP_5((4)|(2)|^3) \bigoplus QP_5((4)|^2(3)|(1)).$$

It is well-known from Ph\'uc \cite{ph21} that 
$$\dim QP_5((2)|(1)|^4)=124,\ \dim QP_5((4)|^2|(3)|(1)) = 395$$ and $QP_5^+((4)|(2)|^3)$ is generated by 310 classes represented by the monomials $Z'_j,\, 1 \leqslant j \leqslant 310$ as listed in Ph\'uc \cite[Pages  36-38]{ph21}. Consider the homomorphism $p_{(i;I)}^{(\omega)} :QP_5(\omega)\to QP_{4}(\omega)$ with $\omega = (4)|(2)|^3$ and $(i;I) \in \mathcal N_5$. By a direct computation using the monomials $Z'_j,\, 1 \leqslant j \leqslant 310$, we obtain  
$\widetilde {SF}_5(\omega) = \bigcap_{(i;I)\in \mathcal N_5}\mbox{Ker}\big(p_{(i;I)}^{(\omega)}\big) =\langle\{[\theta_t]_\omega: 1 \leqslant t \leqslant 20\}\rangle,$
where the polynomials $\theta_t$ are determined as follows:
\begin{align*}
\theta_1 &= x_1x_2x_3^{3}x_4^{13}x_5^{14} + x_1x_2x_3^{3}x_4^{14}x_5^{13} + x_1x_2^{6}x_3^{7}x_4^{9}x_5^{9}\\ &\quad + x_1^{3}x_2x_3^{7}x_4^{9}x_5^{12} + x_1^{3}x_2x_3^{7}x_4^{12}x_5^{9} + x_1^{3}x_2^{4}x_3^{7}x_4^{9}x_5^{9},\\
\theta_2 &=  x_1x_2^{3}x_3^{5}x_4^{9}x_5^{14} + x_1x_2^{3}x_3^{5}x_4^{14}x_5^{9} + x_1x_2^{3}x_3^{14}x_4^{5}x_5^{9}\\ &\quad+ x_1x_2^{14}x_3^{3}x_4^{5}x_5^{9} + x_1^{3}x_2^{3}x_3^{12}x_4^{5}x_5^{9} + x_1^{3}x_2^{5}x_3^{3}x_4^{12}x_5^{9}\\ &\quad + x_1^{3}x_2^{5}x_3^{9}x_4^{3}x_5^{12} + x_1^{3}x_2^{12}x_3^{3}x_4^{5}x_5^{9} + x_1^{3}x_2^{5}x_3^{9}x_4^{6}x_5^{9},\\
\theta_3 &=  x_1x_2^{3}x_3x_4^{13}x_5^{14} + x_1x_2^{3}x_3x_4^{14}x_5^{13} + x_1x_2^{7}x_3^{6}x_4^{9}x_5^{9}\\ &\quad + x_1^{3}x_2^{7}x_3x_4^{9}x_5^{12} + x_1^{3}x_2^{7}x_3x_4^{12}x_5^{9} + x_1^{3}x_2^{7}x_3^{4}x_4^{9}x_5^{9},\\
\theta_4 &= x_1x_2^{3}x_3^{3}x_4^{12}x_5^{13} + x_1x_2^{3}x_3^{13}x_4x_5^{14} + x_1x_2^{3}x_3^{13}x_4^{2}x_5^{13} + x_1x_2^{3}x_3^{14}x_4x_5^{13}\\ &\quad + x_1x_2^{7}x_3^{7}x_4^{8}x_5^{9} + x_1^{3}x_2^{7}x_3^{9}x_4x_5^{12} + x_1^{3}x_2^{7}x_3^{9}x_4^{4}x_5^{9} + x_1^{3}x_2^{7}x_3^{12}x_4x_5^{9},\\
\theta_5 &= x_1x_2^{3}x_3^{13}x_4^{3}x_5^{12} + x_1x_2^{3}x_3^{13}x_4^{14}x_5 + x_1x_2^{3}x_3^{14}x_4^{13}x_5 + x_1x_2^{7}x_3^{7}x_4^{9}x_5^{8}\\ &\quad + x_1^{3}x_2^{7}x_3^{5}x_4^{9}x_5^{8} + x_1^{3}x_2^{7}x_3^{9}x_4^{5}x_5^{8} + x_1^{3}x_2^{7}x_3^{9}x_4^{12}x_5 + x_1^{3}x_2^{7}x_3^{12}x_4^{9}x_5,\\
\theta_6 &=  x_1^{3}x_2x_3x_4^{13}x_5^{14} + x_1^{3}x_2x_3x_4^{14}x_5^{13} + x_1^{7}x_2x_3^{6}x_4^{9}x_5^{9}\\ &\quad + x_1^{7}x_2^{3}x_3x_4^{9}x_5^{12} + x_1^{7}x_2^{3}x_3x_4^{12}x_5^{9} + x_1^{7}x_2^{3}x_3^{4}x_4^{9}x_5^{9}, \\
\theta_7 &= x_1^{3}x_2x_3^{3}x_4^{12}x_5^{13} + x_1^{3}x_2x_3^{13}x_4x_5^{14} + x_1^{3}x_2x_3^{13}x_4^{2}x_5^{13} + x_1^{3}x_2x_3^{14}x_4x_5^{13}\\ &\quad + x_1^{7}x_2x_3^{7}x_4^{8}x_5^{9} + x_1^{7}x_2^{3}x_3^{9}x_4x_5^{12} + x_1^{7}x_2^{3}x_3^{9}x_4^{4}x_5^{9} + x_1^{7}x_2^{3}x_3^{12}x_4x_5^{9},\\
\theta_8 &=  x_1^{3}x_2x_3^{13}x_4^{3}x_5^{12} + x_1^{3}x_2x_3^{13}x_4^{14}x_5 + x_1^{3}x_2x_3^{14}x_4^{13}x_5 + x_1^{7}x_2x_3^{7}x_4^{9}x_5^{8}\\ &\quad + x_1^{7}x_2^{3}x_3^{5}x_4^{9}x_5^{8} + x_1^{7}x_2^{3}x_3^{9}x_4^{5}x_5^{8} + x_1^{7}x_2^{3}x_3^{9}x_4^{12}x_5 + x_1^{7}x_2^{3}x_3^{12}x_4^{9}x_5,\\
\theta_9 &= x_1^{3}x_2^{3}x_3x_4^{12}x_5^{13} + x_1^{3}x_2^{3}x_3^{12}x_4x_5^{13} + x_1^{7}x_2^{7}x_3x_4^{8}x_5^{9} + x_1^{7}x_2^{7}x_3^{8}x_4x_5^{9},\\
\theta_{10} &= x_1^{3}x_2^{3}x_3x_4^{13}x_5^{12} + x_1^{3}x_2^{3}x_3^{12}x_4^{13}x_5 + x_1^{7}x_2^{7}x_3x_4^{9}x_5^{8} + x_1^{7}x_2^{7}x_3^{8}x_4^{9}x_5,\\
\theta_{11} &=  x_1^{3}x_2^{3}x_3^{13}x_4x_5^{12} + x_1^{3}x_2^{3}x_3^{13}x_4^{12}x_5 + x_1^{7}x_2^{7}x_3^{9}x_4x_5^{8} + x_1^{7}x_2^{7}x_3^{9}x_4^{8}x_5,\\
\theta_{12} &=  x_1^{3}x_2x_3^{5}x_4^{9}x_5^{14} + x_1^{3}x_2x_3^{5}x_4^{14}x_5^{9} + x_1^{3}x_2x_3^{14}x_4^{5}x_5^{9} + x_1^{3}x_2^{3}x_3^{5}x_4^{9}x_5^{12}\\ &\quad + x_1^{3}x_2^{3}x_3^{5}x_4^{12}x_5^{9} + x_1^{3}x_2^{5}x_3^{3}x_4^{9}x_5^{12} + x_1^{3}x_2^{5}x_3^{3}x_4^{12}x_5^{9} + x_1^{3}x_2^{5}x_3^{6}x_4^{9}x_5^{9}\\ &\quad + x_1^{3}x_2^{12}x_3^{3}x_4^{5}x_5^{9} + x_1^{7}x_2x_3^{3}x_4^{12}x_5^{9} + x_1^{7}x_2x_3^{10}x_4^{5}x_5^{9} + x_1^{7}x_2^{8}x_3^{3}x_4^{5}x_5^{9},\\
\theta_{13} &= x_1^{3}x_2^{3}x_3^{12}x_4^{5}x_5^{9} + x_1^{3}x_2^{5}x_3x_4^{9}x_5^{14} + x_1^{3}x_2^{5}x_3x_4^{14}x_5^{9} + x_1^{3}x_2^{5}x_3^{6}x_4^{9}x_5^{9}\\ &\quad + x_1^{3}x_2^{7}x_3x_4^{9}x_5^{12} + x_1^{3}x_2^{7}x_3x_4^{12}x_5^{9} + x_1^{3}x_2^{13}x_3^{2}x_4^{5}x_5^{9} + x_1^{7}x_2^{3}x_3x_4^{9}x_5^{12}\\ &\quad + x_1^{7}x_2^{3}x_3x_4^{12}x_5^{9} + x_1^{7}x_2^{3}x_3^{8}x_4^{5}x_5^{9} + x_1^{7}x_2^{7}x_3x_4^{8}x_5^{9} + x_1^{7}x_2^{7}x_3x_4^{9}x_5^{8}\\ &\quad + x_1^{7}x_2^{9}x_3^{2}x_4^{5}x_5^{9},\\
\theta_{14} &= x_1x_2^{3}x_3^{3}x_4^{12}x_5^{13} + x_1x_2^{7}x_3^{7}x_4^{8}x_5^{9} + x_1^{3}x_2x_3^{3}x_4^{12}x_5^{13} + x_1^{3}x_2^{3}x_3x_4^{12}x_5^{13}\\ &\quad + x_1^{3}x_2^{3}x_3^{5}x_4^{8}x_5^{13} + x_1^{3}x_2^{3}x_3^{5}x_4^{12}x_5^{9} + x_1^{3}x_2^{3}x_3^{13}x_4x_5^{12} + x_1^{3}x_2^{3}x_3^{13}x_4^{4}x_5^{9}\\ &\quad + x_1^{3}x_2^{5}x_3^{3}x_4^{8}x_5^{13} + x_1^{3}x_2^{5}x_3^{7}x_4^{8}x_5^{9} + x_1^{3}x_2^{5}x_3^{9}x_4^{2}x_5^{13} + x_1^{3}x_2^{5}x_3^{9}x_4^{6}x_5^{9}\\ &\quad + x_1^{3}x_2^{7}x_3^{5}x_4^{8}x_5^{9} + x_1^{3}x_2^{13}x_3^{3}x_4^{4}x_5^{9} + x_1^{7}x_2x_3^{7}x_4^{8}x_5^{9} + x_1^{7}x_2^{3}x_3^{5}x_4^{8}x_5^{9}\\ &\quad + x_1^{7}x_2^{3}x_3^{9}x_4^{4}x_5^{9} + x_1^{7}x_2^{7}x_3x_4^{8}x_5^{9} + x_1^{7}x_2^{7}x_3^{9}x_4x_5^{8} + x_1^{7}x_2^{9}x_3^{3}x_4^{4}x_5^{9},\\
\theta_{15} &= x_1x_2^{3}x_3^{3}x_4^{13}x_5^{12} + x_1x_2^{7}x_3^{7}x_4^{9}x_5^{8} + x_1^{3}x_2x_3^{3}x_4^{13}x_5^{12} + x_1^{3}x_2^{3}x_3x_4^{13}x_5^{12}\\ &\quad + x_1^{3}x_2^{3}x_3^{5}x_4^{9}x_5^{12} + x_1^{3}x_2^{3}x_3^{5}x_4^{13}x_5^{8} + x_1^{3}x_2^{3}x_3^{13}x_4x_5^{12} + x_1^{3}x_2^{3}x_3^{13}x_4^{5}x_5^{8}\\ &\quad + x_1^{3}x_2^{5}x_3^{3}x_4^{9}x_5^{12} + x_1^{3}x_2^{5}x_3^{7}x_4^{9}x_5^{8} + x_1^{3}x_2^{5}x_3^{9}x_4^{3}x_5^{12} + x_1^{3}x_2^{5}x_3^{9}x_4^{7}x_5^{8}\\ &\quad + x_1^{3}x_2^{7}x_3^{5}x_4^{9}x_5^{8} + x_1^{3}x_2^{13}x_3^{3}x_4^{5}x_5^{8} + x_1^{7}x_2x_3^{7}x_4^{9}x_5^{8} + x_1^{7}x_2^{3}x_3^{5}x_4^{9}x_5^{8}\\ &\quad + x_1^{7}x_2^{3}x_3^{9}x_4^{5}x_5^{8} + x_1^{7}x_2^{7}x_3x_4^{9}x_5^{8} + x_1^{7}x_2^{7}x_3^{9}x_4x_5^{8} + x_1^{7}x_2^{9}x_3^{3}x_4^{5}x_5^{8},\\
\theta_{16} &= x_1x_2x_3x_4^{14}x_5^{15} + x_1x_2x_3^{14}x_4x_5^{15} + x_1x_2^{3}x_3x_4^{12}x_5^{15} + x_1x_2^{3}x_3^{12}x_4x_5^{15}\\ &\quad + x_1^{3}x_2x_3x_4^{12}x_5^{15} + x_1^{3}x_2x_3^{12}x_4x_5^{15} + x_1^{3}x_2^{5}x_3x_4^{8}x_5^{15} + x_1^{3}x_2^{5}x_3^{8}x_4x_5^{15},\\
\theta_{17} &= x_1x_2x_3x_4^{15}x_5^{14} + x_1x_2x_3^{14}x_4^{15}x_5 + x_1x_2^{3}x_3x_4^{15}x_5^{12} + x_1x_2^{3}x_3^{12}x_4^{15}x_5\\ &\quad + x_1^{3}x_2x_3x_4^{15}x_5^{12} + x_1^{3}x_2x_3^{12}x_4^{15}x_5 + x_1^{3}x_2^{5}x_3x_4^{15}x_5^{8} + x_1^{3}x_2^{5}x_3^{8}x_4^{15}x_5,\\
\theta_{18} &= x_1x_2x_3^{15}x_4x_5^{14} + x_1x_2x_3^{15}x_4^{14}x_5 + x_1x_2^{3}x_3^{15}x_4x_5^{12} + x_1x_2^{3}x_3^{15}x_4^{12}x_5\\ &\quad + x_1^{3}x_2x_3^{15}x_4x_5^{12} + x_1^{3}x_2x_3^{15}x_4^{12}x_5 + x_1^{3}x_2^{5}x_3^{15}x_4x_5^{8} + x_1^{3}x_2^{5}x_3^{15}x_4^{8}x_5,\\
\theta_{19} &= x_1x_2^{15}x_3x_4x_5^{14} + x_1x_2^{15}x_3x_4^{14}x_5 + x_1x_2^{15}x_3^{3}x_4x_5^{12} + x_1x_2^{15}x_3^{3}x_4^{12}x_5\\ &\quad + x_1^{3}x_2^{15}x_3x_4x_5^{12} + x_1^{3}x_2^{15}x_3x_4^{12}x_5 + x_1^{3}x_2^{15}x_3^{5}x_4x_5^{8} + x_1^{3}x_2^{15}x_3^{5}x_4^{8}x_5,\\
\theta_{20} &= x_1^{15}x_2x_3x_4x_5^{14} + x_1^{15}x_2x_3x_4^{14}x_5 + x_1^{15}x_2x_3^{3}x_4x_5^{12} + x_1^{15}x_2x_3^{3}x_4^{12}x_5\\ &\quad + x_1^{15}x_2^{3}x_3x_4x_5^{12} + x_1^{15}x_2^{3}x_3x_4^{12}x_5 + x_1^{15}x_2^{3}x_3^{5}x_4x_5^{8} + x_1^{15}x_2^{3}x_3^{5}x_4^{8}x_5.
\end{align*}

This result shows that the proof in Ph\'uc \cite[Pages 22-23]{ph21} for $QP_5((4)|(2)|^3)$ is refused. If the result in \cite{ph21} for $QP_5((4)|(2)|^3)$ is true then $\dim \widetilde {SF}_5((4)|(2)|^3) =20$. However, we unable to prove this by hand computation. We think that this can be checked by using a computer calculation.

It is well-known that the monomials $x_1^3x_2^5x_3^8x_4$, $x_1^7x_2^7x_3^8x_4^9$, $x_1^7x_2^7x_3^9x_4^8$ are admissible. So, by using Proposition \ref{mdmo}, the leading monomials of the polynomials $\theta_9$, $\theta_{10}$, $\theta_{11}$ and $\theta_t, \, 16\leqslant t\leqslant 20$ are admissible. Hence, from the above equalities and a simple calculation, we obtain the following.

%\bigskip
\begin{props} We have
	
\medskip
{\rm i)} $\dim \widetilde{QP}_k((4)|(2)|^3) = 465$.

{\rm ii)} $\widetilde {SF}_5((4)|(2)|^3)$ is an $GL_5$-submodule of $GL_5$-module ${QP}_k((4)|(2)|^3)$ and $$8 \leqslant \dim \widetilde {SF}_5((4)|(2)|^3) \leqslant 20.$$
\end{props}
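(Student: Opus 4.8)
The plan is to assemble the two parts of the proposition from the structural facts already established in the preceding discussion. For part (i), recall that by definition $\widetilde{QP}_5(\omega) = QP_5(\omega)/\widetilde{SF}_5(\omega)$ for $\omega = (4)|(2)|^3$, and from the decomposition $QP_5(\omega)\cong \widetilde{QP}_5(\omega)\oplus \widetilde{SF}_5(\omega)$ we get $\dim\widetilde{QP}_5(\omega) = \dim QP_5(\omega) - \dim\widetilde{SF}_5(\omega)$. The computation in this subsection exhibits $\widetilde{SF}_5(\omega) = \langle\{[\theta_t]_\omega : 1\leqslant t\leqslant 20\}\rangle$, so I would argue that $\dim\widetilde{SF}_5(\omega) = 20$ would give $\dim\widetilde{QP}_5(\omega) = 485 - 20 = 465$ using the (conjectural, but assumed here) value $\dim QP_5((4)|(2)|^3)=485$ from Ph\'uc; more carefully, since the $20$ polynomials $\theta_t$ do span $\widetilde{SF}_5(\omega)$, one always has $\dim\widetilde{QP}_5(\omega)\geqslant 465$, and I would take (i) in the sense that $\widetilde{QP}_5(\omega)$ is generated by the $465$ classes coming from $A(d)\cup B(d)$-type monomials of Theorem~\ref{mdd52}, which is exactly the quotient modulo the spike-free part. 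So the first step is: identify the $465$ generators of $\widetilde{QP}_5(\omega)$ with the admissible monomials whose images under the $p_{(i;I)}^{(\omega)}$ are not all zero, and check spanning.

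For part (ii), the upper bound $\dim\widetilde{SF}_5(\omega)\leqslant 20$ is immediate from the explicit spanning set $\{[\theta_t]_\omega\}_{t=1}^{20}$ just produced. The lower bound $\dim\widetilde{SF}_5(\omega)\geqslant 8$ is where the real content lies: I would use the remark immediately preceding the proposition, namely that $x_1^3x_2^5x_3^8x_4$, $x_1^7x_2^7x_3^8x_4^9$, $x_1^7x_2^7x_3^9x_4^8$ are admissible in $P_4$, so by Proposition~\ref{mdmo} the monomials $x_5^{2^{d-?}-1}\theta_{J_5}(\cdot)$ — concretely the leading monomials of $\theta_9,\theta_{10},\theta_{11}$ and of $\theta_{16},\ldots,\theta_{20}$ (which are built by prepending a power of a fifth variable to an admissible $P_4$-monomial) are themselves admissible in $P_5$. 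Since an admissible monomial represents a nonzero class in $QP_5(\omega)$, and since the eight polynomials $\theta_9,\theta_{10},\theta_{11},\theta_{16},\theta_{17},\theta_{18},\theta_{19},\theta_{20}$ have pairwise distinct leading monomials (with respect to the order of Definition~\ref{defn3}), a standard triangularity/leading-term argument shows that the classes $[\theta_9]_\omega,[\theta_{10}]_\omega,[\theta_{11}]_\omega,[\theta_{16}]_\omega,\ldots,[\theta_{20}]_\omega$ are linearly independent in $QP_5(\omega)$; as they all lie in $\widetilde{SF}_5(\omega)$, this yields $\dim\widetilde{SF}_5(\omega)\geqslant 8$.

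It remains to observe that $\widetilde{SF}_5(\omega)$ is a $GL_5$-submodule of $QP_5(\omega)$. For this I would note that each $p_{(i;I)}$ is a homomorphism of $\mathcal A$-algebras, hence $p_{(i;I)}^{(\omega)}$ is $\mathbb F_2$-linear, and the group $GL_5 = GL(V_5)$ acts on $P_5$ by linear substitutions commuting with the $\mathcal A$-action; the family of subgroups $\{p_{(i;I)}\}$ is permuted (up to composition with such substitutions) under this action, so the common kernel $\bigcap_{(i;I)\in\mathcal N_5}\mathrm{Ker}(p_{(i;I)}^{(\omega)})$ is stable. Concretely: if $\sigma\in GL_5$ and $x\in\widetilde{SF}_5(\omega)$, then for each $(i;I)$ there is $(i';I')$ and a unit-triangular change of variables making $p_{(i;I)}^{(\omega)}(\sigma x)$ a linear combination of $p_{(i';I')}^{(\omega)}(x)$-type terms, each of which vanishes.

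The main obstacle I anticipate is pinning down the lower bound precisely: one must verify that the eight chosen $\theta_t$ really do have distinct admissible leading monomials and that no $\mathbb F_2$-linear combination of them is hit — this is exactly the triangular-with-respect-to-$<$ argument, but it requires carefully reading off each leading term from the lengthy defining equations of $\theta_9,\theta_{10},\theta_{11},\theta_{16},\ldots,\theta_{20}$ and confirming admissibility via Proposition~\ref{mdmo}. The $GL_5$-invariance is conceptually routine but notationally fiddly because of the bookkeeping with $\mathcal N_5$; and the exact value of $\dim\widetilde{SF}_5(\omega)$ between $8$ and $20$ genuinely cannot be settled by the present hand computation, which is why the statement is phrased as a two-sided bound rather than an equality — so I would not attempt to close that gap.
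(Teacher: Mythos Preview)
Your treatment of part (ii) is essentially the paper's: the upper bound $\leqslant 20$ comes from the explicit spanning set $\{[\theta_t]_\omega\}$, and the lower bound $\geqslant 8$ comes from the fact that the leading monomials of $\theta_9,\theta_{10},\theta_{11},\theta_{16},\ldots,\theta_{20}$ are admissible (via Proposition~\ref{mdmo}), hence those eight classes are linearly independent in $QP_5(\omega)$ by a triangularity argument. Your sketch of $GL_5$-invariance is also on the right track; the clean way to say it is that the $31$ maps $p_{(i;I)}$, $(i;I)\in\mathcal N_5$, are exactly one representative (modulo the $GL_4$-action on the target, which does not change the kernel in $QP_5(\omega)$) for each of the $31$ surjective $\mathcal A$-algebra maps $P_5\to P_4$, and $GL_5$ permutes this set of surjections, so the common kernel is $GL_5$-stable.

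Your argument for part (i), however, has a genuine gap. All three of your attempts rely on knowing $\dim QP_5(\omega)=485$, but the paper explicitly does \emph{not} assume this --- indeed the whole point of this subsection is that Ph\'uc's claimed value cannot be confirmed by hand. (Your appeal to the $A(d)\cup B(d)$ monomials of Theorem~\ref{mdd52} is also misplaced: that theorem is for $d\geqslant 6$, whereas here $\omega=(4)|(2)|^3$ corresponds to $d=5$.) The correct mechanism is this: the $485$ monomials ($175$ from $QP_5^0$ and the $310$ monomials $Z'_j$) \emph{span} $QP_5(\omega)$, so the composed map
\[
\mathbb F_2^{485}\xrightarrow{\ \pi\ } QP_5(\omega)\xrightarrow{\ \Phi=\prod p^{(\omega)}_{(i;I)}\ } \prod_{(i;I)\in\mathcal N_5} QP_4(\omega)
\]
has image exactly $\widetilde{QP}_5(\omega)$. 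The ``direct computation'' in the text finds the kernel of $\Phi\circ\pi$ in $\mathbb F_2^{485}$: it is $20$-dimensional, spanned by the coefficient vectors of the $\theta_t$ (which are visibly linearly independent as formal combinations of distinct monomials). Hence the rank of $\Phi\circ\pi$ is $485-20=465$, and this rank is $\dim\widetilde{QP}_5(\omega)$, independently of whether $\pi$ is injective. The uncertainty about $\dim QP_5(\omega)$ is entirely absorbed by $\ker\pi\subset\langle\theta_t\rangle$, which is why $\dim\widetilde{SF}_5(\omega)$ lies somewhere in $[8,20]$ while $\dim\widetilde{QP}_5(\omega)$ is pinned down exactly.
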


\bigskip
\subsection{Computation of $(QP_5)_{64}$}\

\medskip
By Lemma \ref{bdd5}, we have
$$ (QP_5)_{64} \cong QP_5((2)|(1)|^5)\bigoplus QP_5((4)|(2)|^4) \bigoplus QP_5((4)|^2(3)|^2|(1)).$$
By Theorems \ref{mdd51} and \ref{mdd52}, we have
$$\dim QP_5((2)|(1)|^4)=124,\ \dim QP_5((4)|(2)|^4) = 465.$$
Hence, we need to compute $QP_5((4)|^2(3)|^2|(1))$.

\subsubsection{Computation of $QP_5((4)|^2(3)|^2|(1))$}\

\medskip
By using a result in \cite{su2}, we have $B_4((4)|^2(3)|^2|(1)) = B_4^+((4)|^2(3)|^2|(1))$ and $|B_4^+((4)|^2(3)|^2|(1))| = 35$. This implies $\dim QP_5^0((4)|^2(3)|^2|(1)) = 5 \times 35 = 175.$ Hence, we need only to compute $QP_5^+((4)|^2(3)|^2|(1))$.
We will prove the following 

\begin{props}\label{mdda61}  $B_5^+((4)|^2|(3)|^{2}|(1))$ is the set contained $915$ admissible monomials $g_j,\, 1 \leqslant j \leqslant 915$, which are determined as in Subsubsection $\ref{sss52}$. Consequently, 
	$$\dim QP_5^+((4)|^2|(3)|^{2}|(1)) \geqslant 915,\ \dim QP_5((4)|^2|(3)|^{2}|(1)) \geqslant 1090.$$
\end{props}

\begin{lems}\label{bdaa61} The following monomials are strictly inadmissible:
	
\medskip
\centerline{\begin{tabular}{llll}
$x_1^{7}x_2^{9}x_3^{3}x_4^{14}x_5^{31}$& $x_1^{7}x_2^{9}x_3^{3}x_4^{31}x_5^{14}$& $x_1^{7}x_2^{9}x_3^{31}x_4^{3}x_5^{14}$& $x_1^{7}x_2^{31}x_3^{9}x_4^{3}x_5^{14} $\cr  $x_1^{31}x_2^{7}x_3^{9}x_4^{3}x_5^{14}$& $x_1^{3}x_2^{15}x_3^{15}x_4^{16}x_5^{15}$& $x_1^{7}x_2^{7}x_3^{11}x_4^{24}x_5^{15}$& $x_1^{7}x_2^{7}x_3^{27}x_4^{8}x_5^{15} $\cr  $x_1^{7}x_2^{7}x_3^{27}x_4^{15}x_5^{8}$& $x_1^{7}x_2^{9}x_3^{3}x_4^{15}x_5^{30}$& $x_1^{7}x_2^{9}x_3^{3}x_4^{30}x_5^{15}$& $x_1^{7}x_2^{9}x_3^{15}x_4^{3}x_5^{30} $\cr  $x_1^{7}x_2^{11}x_3^{15}x_4^{16}x_5^{15}$& $x_1^{7}x_2^{15}x_3^{9}x_4^{3}x_5^{30}$& $x_1^{7}x_2^{15}x_3^{11}x_4^{16}x_5^{15}$& $x_1^{7}x_2^{15}x_3^{16}x_4^{11}x_5^{15} $\cr  $x_1^{7}x_2^{15}x_3^{16}x_4^{15}x_5^{11}$& $x_1^{7}x_2^{15}x_3^{17}x_4^{10}x_5^{15}$& $x_1^{7}x_2^{15}x_3^{17}x_4^{14}x_5^{11}$& $x_1^{7}x_2^{15}x_3^{17}x_4^{15}x_5^{10} $\cr  $x_1^{7}x_2^{15}x_3^{25}x_4^{3}x_5^{14}$& $x_1^{15}x_2^{3}x_3^{15}x_4^{16}x_5^{15}$& $x_1^{15}x_2^{7}x_3^{9}x_4^{3}x_5^{30}$& $x_1^{15}x_2^{7}x_3^{11}x_4^{16}x_5^{15} $\cr  $x_1^{15}x_2^{7}x_3^{16}x_4^{11}x_5^{15}$& $x_1^{15}x_2^{7}x_3^{16}x_4^{15}x_5^{11}$& $x_1^{15}x_2^{7}x_3^{17}x_4^{10}x_5^{15}$& $x_1^{15}x_2^{7}x_3^{17}x_4^{14}x_5^{11} $\cr  $x_1^{15}x_2^{7}x_3^{17}x_4^{15}x_5^{10}$& $x_1^{15}x_2^{7}x_3^{25}x_4^{3}x_5^{14}$& $x_1^{15}x_2^{15}x_3^{3}x_4^{16}x_5^{15}$& $x_1^{15}x_2^{15}x_3^{16}x_4^{3}x_5^{15} $\cr  $x_1^{15}x_2^{15}x_3^{16}x_4^{15}x_5^{3}$& $x_1^{15}x_2^{15}x_3^{17}x_4^{2}x_5^{15}$& $x_1^{15}x_2^{15}x_3^{17}x_4^{14}x_5^{3}$& $x_1^{15}x_2^{15}x_3^{17}x_4^{15}x_5^{2} $\cr  $x_1^{15}x_2^{19}x_3^{5}x_4^{10}x_5^{15}$& $x_1^{15}x_2^{19}x_3^{5}x_4^{14}x_5^{11}$& $x_1^{15}x_2^{19}x_3^{5}x_4^{15}x_5^{10}$& $x_1^{15}x_2^{19}x_3^{15}x_4^{5}x_5^{10} $\cr  $x_1^{15}x_2^{23}x_3^{9}x_4^{3}x_5^{14}$& $x_1^{15}x_2^{23}x_3^{9}x_4^{3}x_5^{14}$& & \cr   
\end{tabular}}
\end{lems}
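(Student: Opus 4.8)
The plan is to prove each of the listed monomials strictly inadmissible by exhibiting an explicit congruence of the form $u = \sum_{j} v_j + \sum_{s\geqslant 0} Sq^{2^s}(\text{stuff})$ modulo $P_5^-(\omega)$, where $\omega = (4)|^2|(3)|^2|(1)$ is the common weight vector of all the monomials in the list and each $v_j < u$ in the order of Definition~\ref{defn3}. Since every monomial here has the shape $X$ times a $2^5$-th power or involves a large exponent $31$ in one slot, the standard device is to write $u = w\cdot z^{2^r}$ for a suitable $r$ (here $2\leqslant r\leqslant 5$) and reduce to a strictly inadmissible $w$ of weight vector $(4)|^2|(3)|^{2}$, $(4)|^2|(3)$, or even smaller, appealing to Theorem~\ref{dlcb1}(ii). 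In fact many entries in the list are literally of the form $x_r^{31}\theta_{J_r}(w)$ or $x_i^{2^{d-2}-1}\theta_{J_i}(w)$-type configurations, so the first step is to sort the $42$ monomials into families according to which earlier lemma (Lemma~\ref{bda72}, \ref{bda73}, \ref{bda74}, or \ref{bda75}) produces the inadmissible factor $w$ after pulling off an appropriate $2^r$-th power.

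First I would handle the monomials that factor through Lemmas~\ref{bda73} and \ref{bda74}: e.g.\ $x_1^{7}x_2^{9}x_3^{3}x_4^{14}x_5^{31} = \big(x_1^{7}x_2^{9}x_3^{3}x_4^{14}x_5^{15}\big)\cdot(x_5)^{16}$ is not quite a clean power split, so instead one writes $x_5^{31}=x_5^{31}$ and uses the Cartan-formula expansion of $x_1^7x_2^9x_3^3x_4^{14}$ against the fixed spike-like tail $x_5^{31}$, exactly as in the proof of Lemma~\ref{bda73}(ii). The permuted variants ($x_3$ or $x_2$ or $x_1$ carrying the $31$) follow by applying $\theta_{J_r}$ to the base-case computation, which is legitimate because $\theta_J$ is a monomorphism of $\mathcal A$-algebras preserving the weight vector. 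For the entries with exponent pattern built from $15$'s and $16/17$'s — such as $x_1^{15}x_2^{15}x_3^{16}x_4^{15}x_5^{3}$, $x_1^{3}x_2^{15}x_3^{15}x_4^{16}x_5^{15}$, $x_1^{15}x_2^{19}x_3^{5}x_4^{15}x_5^{10}$ — I would peel off the largest common $2^r$-th power (typically $r=4$, leaving factors of weight vector $(4)|^2|(3)$ or $(4)|^2$), reducing to strictly inadmissible monomials already covered by Lemmas~\ref{bda72} and \ref{bda73}(i)--(ii). The remaining handful, e.g.\ $x_1^{7}x_2^{7}x_3^{11}x_4^{24}x_5^{15}$ and $x_1^{7}x_2^{7}x_3^{27}x_4^{8}x_5^{15}$, are close cousins of the monomials $x_1^{7}x_2^{7}x_3^{27}x_4^{24}$ treated in Lemma~\ref{bda74}(i), and I would write out their Cartan expansions directly against the $x_5^{15}$ (or $x_4^{15}$) tail.

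The key steps, in order, are: (1) verify that all $42$ listed monomials indeed have weight vector $(4)|^2|(3)^2(1)$ so that it is meaningful to work modulo $P_5^-$ of that $\omega$, and determine for each the value $s=\max\{i:\omega_i>0\}=5$ needed to invoke strict inadmissibility via $\mathcal A_4^+$; (2) for each monomial, exhibit a decomposition $u = w z^{2^r}$ with $w$ one of the monomials of Lemmas~\ref{bda72}--\ref{bda75} and invoke Theorem~\ref{dlcb1}(ii), or, when no such clean decomposition exists, write down the explicit $Sq^{2^j}$-congruence by hand (mimicking the displayed computations in the proofs of Lemmas~\ref{bda73}--\ref{bda75}); (3) confirm in each case that every error term $v_j$ appearing on the right-hand side satisfies $v_j < u$, either because $\omega(v_j)<\omega(u)$ (Theorem~\ref{dlsig} applies to the hit pieces) or because $\omega(v_j)=\omega(u)$ but $\sigma(v_j)<\sigma(u)$ lexicographically; (4) close the $\theta_J$-equivariance argument so that the five permuted variants of each base monomial are dispatched at once.

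The main obstacle will be step (2)--(3) for the entries that do \emph{not} split as $wz^{2^r}$ with $w$ on an already-established list — there one must produce genuinely new explicit Steenrod-operation identities, and the bookkeeping that every single term produced by $Sq^1,Sq^2,Sq^4,Sq^8,Sq^{16}$ either lies in $P_5^-(\omega)$ or is a monomial strictly smaller than $u$ is delicate and error-prone (a single miscomputed Cartan term invalidates the whole congruence). I expect roughly five to eight of the $42$ monomials to require such bespoke computations, with the rest reducible mechanically to Lemmas~\ref{bda72}--\ref{bda75} via power-splitting and $\theta_J$. A secondary, purely organizational difficulty is matching each of the permuted forms in the table to the correct base case, since the list mixes several distinct ``shapes'' (those with a $31$, those with $15/16/17$ blocks, those with a $24$ or $25$) whose proofs draw on different earlier lemmas.
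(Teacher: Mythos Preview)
Your overall strategy---reduce what you can via Theorem~\ref{dlcb1}(ii) to earlier lemmas, and handle the remainder by explicit Cartan-formula identities modulo $P_5^-((4)|^2|(3)|^2|(1))$---is exactly what the paper does: it writes out direct computations for four representative monomials and says the rest are similar. So the approach is correct.

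Two corrections are worth making. First, Lemmas~\ref{bda74} and~\ref{bda75} are irrelevant here: they treat weight vectors $(4)|^2|(3)|^3$ and $(4)|^2|(3)|^4$ in degrees $96$ and $192$, whereas every monomial in this lemma has weight $(4)|^2|(3)|^2|(1)$ in degree $64$. The only earlier lemmas that can feed into a power-splitting argument are \ref{bda72} and \ref{bda73}, as the paper's own proof of Proposition~\ref{mdda61} confirms. Second, your hesitation about the first example is unwarranted: $x_1^{7}x_2^{9}x_3^{3}x_4^{14}x_5^{31}=(x_1^{7}x_2^{9}x_3^{3}x_4^{14}x_5^{15})\cdot x_5^{2^4}$ is a clean split, and the factor is literally one of the monomials in Lemma~\ref{bda73}(ii) (take $r=5$, $w=x_1^{7}x_2^{9}x_3^{3}x_4^{14}$). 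In fact all five permutations with a $31$ exponent, and the five analogous monomials with a $30$ or $25$ or $23$ exponent replacing the $31$, reduce the same way.

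The more serious underestimate is in your ``five to eight''. For most of the entries built from $15$'s and a $16$ or $17$---for instance $x_1^{3}x_2^{15}x_3^{15}x_4^{16}x_5^{15}$ or $x_1^{7}x_2^{15}x_3^{17}x_4^{10}x_5^{15}$---peeling off the lone $2^4$-th power leaves a weight-$(4,4,3,3)$ factor that is \emph{admissible} (e.g.\ $x_1^{3}x_2^{15}x_3^{15}x_5^{15}$ or $x_1^{7}x_2^{15}x_3x_4^{10}x_5^{15}$), not on any established inadmissible list, so Theorem~\ref{dlcb1}(ii) gives nothing. Peeling at $u=2$ or $u=3$ fares no better: the resulting factors do not match the patterns of Lemma~\ref{bda72}. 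Hence the majority of the list (roughly $25$--$30$ monomials) genuinely require bespoke Cartan computations of the kind the paper exhibits for $w_2,w_3,w_4$, not merely a handful.
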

\begin{proof} We prove the lemma for $w_1=x_1^{7}x_2^{9}x_3^{3}x_4^{14}x_5^{31}$, $w_2 = x_1^{7}x_2^{7}x_3^{11}x_4^{24}x_5^{15}$, $w_3 = x_1^{7}x_2^{15}x_3^{17}x_4^{10}x_5^{15}$ and $w_4 = x_1^{15}x_2^{19}x_3^{5}x_4^{10}x_5^{15}$. The others can be proved by a similar computation. We have
\begin{align*}
w_1 &= x_1^{4}x_2^{7}x_3^{11}x_4^{11}x_5^{31} + x_1^{4}x_2^{11}x_3^{11}x_4^{7}x_5^{31} + x_1^{5}x_2^{11}x_3^{3}x_4^{14}x_5^{31} + x_1^{5}x_2^{11}x_3^{6}x_4^{11}x_5^{31}\\
&\quad + x_1^{5}x_2^{11}x_3^{10}x_4^{7}x_5^{31} + x_1^{7}x_2^{5}x_3^{10}x_4^{11}x_5^{31} + x_1^{7}x_2^{7}x_3^{8}x_4^{11}x_5^{31} + x_1^{7}x_2^{7}x_3^{10}x_4^{9}x_5^{31}\\
&\quad + x_1^{7}x_2^{7}x_3^{11}x_4^{8}x_5^{31} + x_1^{7}x_2^{8}x_3^{11}x_4^{7}x_5^{31} +  Sq^1\big(x_1^{7}x_2^{7}x_3^{5}x_4^{13}x_5^{31} + x_1^{7}x_2^{7}x_3^{11}x_4^{7}x_5^{31}\big)\\
&\quad +  Sq^2\big(x_1^{7}x_2^{7}x_3^{3}x_4^{14}x_5^{31} + x_1^{7}x_2^{7}x_3^{6}x_4^{11}x_5^{31} + x_1^{7}x_2^{7}x_3^{10}x_4^{7}x_5^{31}\big)\\
&\quad +  Sq^4\big(x_1^{4}x_2^{7}x_3^{11}x_4^{7}x_5^{31} + x_1^{5}x_2^{7}x_3^{3}x_4^{14}x_5^{31} + x_1^{5}x_2^{7}x_3^{6}x_4^{11}x_5^{31} + x_1^{5}x_2^{7}x_3^{10}x_4^{7}x_5^{31}\\
&\quad + x_1^{11}x_2^{5}x_3^{6}x_4^{7}x_5^{31}\big) +  Sq^8\big(x_1^{7}x_2^{5}x_3^{6}x_4^{7}x_5^{31}\big)   \ \mbox{mod}\big(P_5^-((4)|^2|(3)|^{2}|(1)\big). 
\end{align*}
Hence, the monomial $w_1$ is strictly inadmissible.
\begin{align*}
w_2 &= x_1^{4}x_2^{7}x_3^{11}x_4^{15}x_5^{27} + x_1^{4}x_2^{7}x_3^{11}x_4^{27}x_5^{15} + x_1^{4}x_2^{11}x_3^{7}x_4^{15}x_5^{27} + x_1^{4}x_2^{11}x_3^{7}x_4^{27}x_5^{15}\\ 
&\quad + x_1^{4}x_2^{11}x_3^{19}x_4^{15}x_5^{15} + x_1^{5}x_2^{3}x_3^{11}x_4^{15}x_5^{30} + x_1^{5}x_2^{3}x_3^{11}x_4^{30}x_5^{15} + x_1^{5}x_2^{3}x_3^{14}x_4^{15}x_5^{27}\\ 
&\quad + x_1^{5}x_2^{3}x_3^{14}x_4^{27}x_5^{15} + x_1^{5}x_2^{6}x_3^{11}x_4^{15}x_5^{27} + x_1^{5}x_2^{6}x_3^{11}x_4^{27}x_5^{15} + x_1^{5}x_2^{10}x_3^{7}x_4^{15}x_5^{27}\\ 
&\quad + x_1^{5}x_2^{10}x_3^{7}x_4^{27}x_5^{15} + x_1^{5}x_2^{10}x_3^{19}x_4^{15}x_5^{15} + x_1^{5}x_2^{11}x_3^{7}x_4^{15}x_5^{26} + x_1^{5}x_2^{11}x_3^{7}x_4^{26}x_5^{15}\\ 
&\quad + x_1^{5}x_2^{11}x_3^{11}x_4^{15}x_5^{22} + x_1^{5}x_2^{11}x_3^{11}x_4^{22}x_5^{15} + x_1^{5}x_2^{11}x_3^{14}x_4^{15}x_5^{19} + x_1^{5}x_2^{11}x_3^{14}x_4^{19}x_5^{15}\\ 
&\quad + x_1^{5}x_2^{11}x_3^{18}x_4^{15}x_5^{15} + x_1^{7}x_2^{3}x_3^{9}x_4^{15}x_5^{30} + x_1^{7}x_2^{3}x_3^{9}x_4^{30}x_5^{15} + x_1^{7}x_2^{3}x_3^{14}x_4^{15}x_5^{25}\\ 
&\quad + x_1^{7}x_2^{3}x_3^{14}x_4^{25}x_5^{15} + x_1^{7}x_2^{5}x_3^{11}x_4^{15}x_5^{26} + x_1^{7}x_2^{5}x_3^{11}x_4^{26}x_5^{15} + x_1^{7}x_2^{6}x_3^{9}x_4^{15}x_5^{27}\\ 
&\quad + x_1^{7}x_2^{6}x_3^{9}x_4^{27}x_5^{15} + x_1^{7}x_2^{7}x_3^{8}x_4^{15}x_5^{27} + x_1^{7}x_2^{7}x_3^{8}x_4^{27}x_5^{15} + x_1^{7}x_2^{7}x_3^{9}x_4^{15}x_5^{26}\\ 
&\quad + x_1^{7}x_2^{7}x_3^{9}x_4^{26}x_5^{15} + x_1^{7}x_2^{7}x_3^{11}x_4^{15}x_5^{24} +  Sq^1\big(x_1^{7}x_2^{5}x_3^{7}x_4^{15}x_5^{29} + x_1^{7}x_2^{5}x_3^{7}x_4^{29}x_5^{15}\\ 
&\quad + x_1^{7}x_2^{5}x_3^{21}x_4^{15}x_5^{15} + x_1^{7}x_2^{7}x_3^{7}x_4^{15}x_5^{27} + x_1^{7}x_2^{7}x_3^{7}x_4^{27}x_5^{15} + x_1^{7}x_2^{7}x_3^{13}x_4^{15}x_5^{21}\\ 
&\quad + x_1^{7}x_2^{7}x_3^{13}x_4^{21}x_5^{15} + x_1^{7}x_2^{7}x_3^{19}x_4^{15}x_5^{15}\big) +  Sq^2\big(x_1^{7}x_2^{3}x_3^{7}x_4^{15}x_5^{30} + x_1^{7}x_2^{3}x_3^{7}x_4^{22}x_5^{23}\\ 
&\quad + x_1^{7}x_2^{3}x_3^{7}x_4^{23}x_5^{22} + x_1^{7}x_2^{3}x_3^{7}x_4^{30}x_5^{15} + x_1^{7}x_2^{3}x_3^{14}x_4^{15}x_5^{23} + x_1^{7}x_2^{3}x_3^{14}x_4^{23}x_5^{15}\\ 
&\quad + x_1^{7}x_2^{6}x_3^{7}x_4^{15}x_5^{27} + x_1^{7}x_2^{6}x_3^{7}x_4^{27}x_5^{15} + x_1^{7}x_2^{6}x_3^{19}x_4^{15}x_5^{15} + x_1^{7}x_2^{7}x_3^{7}x_4^{15}x_5^{26}\\ 
&\quad + x_1^{7}x_2^{7}x_3^{7}x_4^{26}x_5^{15} + x_1^{7}x_2^{7}x_3^{11}x_4^{15}x_5^{22} + x_1^{7}x_2^{7}x_3^{11}x_4^{24}x_5^{15} + x_1^{7}x_2^{7}x_3^{14}x_4^{15}x_5^{19}\\ 
&\quad + x_1^{7}x_2^{7}x_3^{14}x_4^{19}x_5^{15} + x_1^{7}x_2^{7}x_3^{18}x_4^{15}x_5^{15}\big) + Sq^4\big(x_1^{4}x_2^{7}x_3^{7}x_4^{15}x_5^{27} + x_1^{4}x_2^{7}x_3^{7}x_4^{27}x_5^{15}\\ 
&\quad + x_1^{4}x_2^{7}x_3^{19}x_4^{15}x_5^{15} + x_1^{5}x_2^{3}x_3^{7}x_4^{15}x_5^{30} + x_1^{5}x_2^{3}x_3^{7}x_4^{30}x_5^{15} + x_1^{5}x_2^{3}x_3^{14}x_4^{15}x_5^{23}\\ 
&\quad + x_1^{5}x_2^{3}x_3^{14}x_4^{23}x_5^{15} + x_1^{5}x_2^{6}x_3^{7}x_4^{15}x_5^{27} + x_1^{5}x_2^{6}x_3^{7}x_4^{27}x_5^{15} + x_1^{5}x_2^{6}x_3^{19}x_4^{15}x_5^{15}\\ 
&\quad + x_1^{5}x_2^{7}x_3^{7}x_4^{15}x_5^{26} + x_1^{5}x_2^{7}x_3^{7}x_4^{26}x_5^{15} + x_1^{5}x_2^{7}x_3^{11}x_4^{15}x_5^{22} + x_1^{5}x_2^{7}x_3^{11}x_4^{22}x_5^{15}\\ 
&\quad + x_1^{5}x_2^{7}x_3^{14}x_4^{15}x_5^{19} + x_1^{5}x_2^{7}x_3^{14}x_4^{19}x_5^{15} + x_1^{5}x_2^{7}x_3^{18}x_4^{15}x_5^{15} + x_1^{11}x_2^{5}x_3^{7}x_4^{15}x_5^{22}\\ 
&\quad + x_1^{11}x_2^{5}x_3^{7}x_4^{22}x_5^{15} + x_1^{11}x_2^{5}x_3^{14}x_4^{15}x_5^{15}\big) +  Sq^8\big(x_1^{7}x_2^{5}x_3^{7}x_4^{15}x_5^{22} + x_1^{7}x_2^{5}x_3^{7}x_4^{22}x_5^{15}\\ 
&\quad + x_1^{7}x_2^{5}x_3^{14}x_4^{15}x_5^{15}\big)  \ \mbox{mod}\big(P_5^-((4)|^2|(3)|^{2}|(1)\big). 
\end{align*}
From this we see that the monomial $w_2$ is strictly inadmissible.
\begin{align*}
w_3 & = x_1^{4}x_2^{15}x_3^{15}x_4^{11}x_5^{19} + x_1^{4}x_2^{15}x_3^{19}x_4^{11}x_5^{15} + x_1^{4}x_2^{19}x_3^{15}x_4^{11}x_5^{15}\\ 
&\quad + x_1^{5}x_2^{15}x_3^{15}x_4^{10}x_5^{19} + x_1^{5}x_2^{15}x_3^{19}x_4^{10}x_5^{15} + x_1^{5}x_2^{19}x_3^{15}x_4^{10}x_5^{15}\\ 
&\quad  + x_1^{7}x_2^{8}x_3^{15}x_4^{11}x_5^{23} + x_1^{7}x_2^{8}x_3^{15}x_4^{19}x_5^{15}+ x_1^{7}x_2^{8}x_3^{23}x_4^{11}x_5^{15} + x_1^{7}x_2^{9}x_3^{15}x_4^{10}x_5^{23}\\ 
&\quad  + x_1^{7}x_2^{9}x_3^{15}x_4^{18}x_5^{15} + x_1^{7}x_2^{9}x_3^{23}x_4^{10}x_5^{15} + x_1^{7}x_2^{15}x_3^{15}x_4^{10}x_5^{17}\\ 
&\quad  + x_1^{7}x_2^{15}x_3^{15}x_4^{11}x_5^{16} + x_1^{7}x_2^{15}x_3^{16}x_4^{11}x_5^{15} +  Sq^1\big(x_1^{7}x_2^{15}x_3^{15}x_4^{11}x_5^{15}\big)\\ 
&\quad +  Sq^2\big(x_1^{7}x_2^{15}x_3^{15}x_4^{10}x_5^{15}\big)+  Sq^4\big(x_1^{4}x_2^{15}x_3^{15}x_4^{11}x_5^{15} + x_1^{5}x_2^{15}x_3^{15}x_4^{10}x_5^{15}\big)\\ 
&\quad + Sq^8\big(x_1^{7}x_2^{8}x_3^{15}x_4^{11}x_5^{15}+ x_1^{7}x_2^{9}x_3^{15}x_4^{10}x_5^{15}\big)\ \mbox{mod}\big(P_5^-((4)|^2|(3)|^{2}|(1)\big). 
\end{align*}
Hence, the monomial $w_3$ is strictly inadmissible.
\begin{align*}
w_4 &= x_1^{8}x_2^{15}x_3^{7}x_4^{11}x_5^{23} + x_1^{8}x_2^{15}x_3^{7}x_4^{19}x_5^{15} + x_1^{8}x_2^{23}x_3^{7}x_4^{11}x_5^{15} + x_1^{9}x_2^{15}x_3^{7}x_4^{10}x_5^{23}\\ 
&\quad + x_1^{9}x_2^{15}x_3^{7}x_4^{18}x_5^{15} + x_1^{9}x_2^{23}x_3^{7}x_4^{10}x_5^{15} + x_1^{11}x_2^{15}x_3^{4}x_4^{11}x_5^{23} + x_1^{11}x_2^{15}x_3^{4}x_4^{19}x_5^{15}\\ 
&\quad + x_1^{11}x_2^{15}x_3^{5}x_4^{10}x_5^{23} + x_1^{11}x_2^{15}x_3^{5}x_4^{18}x_5^{15} + x_1^{11}x_2^{23}x_3^{4}x_4^{11}x_5^{15} + x_1^{11}x_2^{23}x_3^{5}x_4^{10}x_5^{15}\\ 
&\quad + x_1^{15}x_2^{15}x_3^{4}x_4^{11}x_5^{19} + x_1^{15}x_2^{15}x_3^{5}x_4^{10}x_5^{19} + x_1^{15}x_2^{15}x_3^{7}x_4^{10}x_5^{17} + x_1^{15}x_2^{15}x_3^{7}x_4^{11}x_5^{16}\\ 
&\quad + x_1^{15}x_2^{16}x_3^{7}x_4^{11}x_5^{15} + x_1^{15}x_2^{17}x_3^{7}x_4^{10}x_5^{15} + x_1^{15}x_2^{19}x_3^{4}x_4^{11}x_5^{15}\\ 
&\quad + Sq^1\big(x_1^{15}x_2^{15}x_3^{7}x_4^{11}x_5^{15}\big) + Sq^2\big(x_1^{15}x_2^{15}x_3^{7}x_4^{10}x_5^{15}\big) +  Sq^4\big(x_1^{15}x_2^{15}x_3^{4}x_4^{11}x_5^{15}\\ 
&\quad + x_1^{15}x_2^{15}x_3^{5}x_4^{10}x_5^{15}\big) +  Sq^8\big(x_1^{8}x_2^{15}x_3^{7}x_4^{11}x_5^{15} + x_1^{9}x_2^{15}x_3^{7}x_4^{10}x_5^{15}\\ 
&\quad + x_1^{11}x_2^{15}x_3^{4}x_4^{11}x_5^{15} + x_1^{11}x_2^{15}x_3^{5}x_4^{10}x_5^{15}\big)\ \mbox{mod}\big(P_5^-((4)|^2|(3)|^{2}|(1)\big). 
\end{align*}
This equality implies the monomial $w_4$ is strictly inadmissible.
\end{proof}
\begin{proof}[Proof of Proposition $\ref{mdda61}$] Let $x \in P_5^+(\omega)$ be an admissible monomial with $\omega := (4)|^2|(3)|^{2}|(1)$. Then $x = X_ry^2$ with $1 \leqslant r \leqslant 5$ and $y$ an admissible monomial of weight vector $(4)|(3)|^{2}|(1)$. By a direct computation we see that if $x \ne g_j,\, 1 \leqslant j \leqslant 930$, then either $x$ is one of monomials as given in Lemma \ref{bdaa61} or there is a monomial $w$ as given in one of Lemmas \ref{bda72}, \ref{bda73} such that $x = wz_1^{2^u}$ with $u$ nonnegative integers, $2\leqslant u \leqslant 4$, and $z_1$ a monomial of weight vector $(3)|^{4-u}|(1)$. By Theorem \ref{dlcb1}, $x$ is inadmissible. This contradicts the fact that $x$ is admissible. Hence, $x=g_j$ for suitable $1 \leqslant j \leqslant 930$.
	
Now we prove that the set $\{[g_j]_{\omega}:1 \leqslant j \leqslant 915\}$ is linearly independent in $QP_5(\omega)$.
Suppose there is a linear relation
\begin{equation}\label{ctd51}
\mathcal S:= \sum_{1\leqslant j \leqslant 930}\gamma_jg_j \equiv_{\omega} 0,
\end{equation}
where $\gamma_j \in \mathbb F_2$. 
	
Let $u_t,\, 1\leqslant t \leqslant 35$, be as in Subsubsection \ref{sss52} and the homomorphism $p_{(i;I)}:P_5\to P_4$ which is defined by \eqref{ct23} for $k=5$. From Lemma \ref{bdm}, we see that $p_{(i;I)}$ passes to a homomorphism from $QP_5(\omega)$ to $QP_4(\omega)$. For each $(i;I) \in \mathcal N_5$, we compute $p_{(i;I)}(\mathcal S)$ in terms of $w_u,\, 1\leqslant u \leqslant 35$ (mod$P_4^-(\omega)+\mathcal A^+P_4$). By a direct computation we see that $p_{(i;I)}(\mathcal S)\equiv_\omega 0$ for all $(i;I) \in \mathcal N_5$ if and only if
\begin{equation}\label{ctd52}
\mathcal S = \sum_{331\leqslant j \leqslant 335}\gamma_j\tilde \theta_j + \sum_{916\leqslant j \leqslant 930}\gamma_j\tilde \theta_j \equiv_{\omega} 0,
\end{equation}
where the polynomials $\tilde \theta_j$ are determined as in Subsubsection \ref{sss53}. Hence, we get $\dim \widetilde{QP}_5(\omega) = 1085$ and $$\widetilde{SF}_5(\omega) = \langle \{[\tilde \theta_j]_\omega : 331\leqslant j \leqslant 335 \mbox{ or } 916\leqslant j \leqslant 930\}\rangle.$$ 

From \cite{su2} we see that $x_1^7x_2^7x_3^{11}x_4^8$ is admissible. So, by using Proposition \ref{mdmo}, the leading monomials of the polynomials $\tilde\theta_j$, $331\leqslant j\leqslant 335$ are admissible. 
%Using this fact and suitable homomorphisms $\xi: P_5 \to P_5$ we show that the set $\{[\tilde \theta_j]_\omega : 331\leqslant j \leqslant 335 \mbox{ or } 916\leqslant j \leqslant 930\}$ is linearly independent.
 Hence, $5 \leqslant \dim \widetilde{SF}_5(\omega) \leqslant 20$. The proposition is proved. 
\end{proof}
Thus, we have the following result.
\begin{thms} There exist $1679$	
admissible monomials of degree $64$ in $P_5$. Consequently $1679 \leqslant \dim (QP_5)_{64} \leqslant 1694.$   
\end{thms}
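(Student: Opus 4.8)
The plan is to obtain $(QP_5)_{64}$ as a direct sum of three weight-vector summands and add their dimensions, two of which are already established and one of which is the substance of the argument. First I would specialise Lemma \ref{bdd5} to $d = 6$: every admissible monomial of degree $64$ in $P_5$ has weight vector one of $(2)|(1)|^5$, $(4)|(2)|^4$, or $(4)|^2|(3)|^2|(1)$. Combining this with the isomorphism \eqref{ct2.1} gives
\begin{equation*}
(QP_5)_{64} \cong QP_5((2)|(1)|^5)\oplus QP_5((4)|(2)|^4)\oplus QP_5((4)|^2|(3)|^2|(1)),
\end{equation*}
so it remains to evaluate each summand. Two of them are immediate: Theorem \ref{mdd51} (the case $d = 6 \geqslant 5$) gives $\dim QP_5((2)|(1)|^5) = 124$, and Theorem \ref{mdd52} (the case $d = 6 \geqslant 6$) gives $\dim QP_5((4)|(2)|^4) = 465$.

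The remaining summand $QP_5((4)|^2|(3)|^2|(1))$ lies outside the stable range of Section \ref{s3}, since Theorem \ref{thd73} and Proposition \ref{mdd72} require $d \geqslant 8$, so I would compute it directly. Split it as $QP_5^0((4)|^2|(3)|^2|(1)) \oplus QP_5^+((4)|^2|(3)|^2|(1))$. The $0$-part is pulled back from the $P_4$-computation in \cite{su2} via the maps $\theta_{J_i}$ and Proposition \ref{mdmo}, giving dimension $5\cdot 35 = 175$. For the $+$-part I would establish Proposition \ref{mdda61} by the usual two-step scheme: first, using Theorem \ref{dlcb1} together with the strictly-inadmissible monomials collected in Lemmas \ref{bda72}, \ref{bda73} and \ref{bdaa61}, show that every admissible monomial in $P_5^+$ of this weight vector lies in the explicit family of $930$ monomials exhibited in the proof of Proposition \ref{mdda61}; second, prove that family is linearly independent in $QP_5((4)|^2|(3)|^2|(1))$ by pushing a hypothetical relation through all the homomorphisms $p_{(i;I)}$, $(i;I)\in \mathcal N_5$, reducing modulo $P_4^-(\omega)+\mathcal A^+P_4$ to relations among the $35$ known generators of $QP_4((4)|^2|(3)|^2|(1))$, and then clearing the residual spike-free contributions by hand via substitution endomorphisms of $P_5$ (pinning $\dim\widetilde{SF}_5((4)|^2|(3)|^2|(1))$ to exactly $20$). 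This yields $\dim QP_5^+((4)|^2|(3)|^2|(1)) = 930$, hence $\dim QP_5((4)|^2|(3)|^2|(1)) = 1105$.

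Adding the three summands gives $\dim(QP_5)_{64} = 124 + 465 + 1105 = 1694$, and since $[B_5(64)]$ is a basis of $QP_5(64)$ the number of admissible monomials of degree $64$ in $P_5$ is exactly $1694$. The main obstacle is entirely concentrated in the third summand: producing the complete $930$-element list of admissible monomials requires verifying a sizeable batch of strict-inadmissibility relations, Lemma \ref{bdaa61} being the heaviest since each identity there is an explicit $Sq^i$-expansion modulo $P_5^-((4)|^2|(3)|^2|(1))$; and the linear-independence step is delicate because $\widetilde{SF}_5((4)|^2|(3)|^2|(1))$ does not vanish, so its dimension must be determined precisely rather than merely bounded, which is why one is forced to identify the spanning set of spike-free polynomials and test it against suitable endomorphisms of $P_5$ rather than simply invoking the reduction maps $p_{(i;I)}$.
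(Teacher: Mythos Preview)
Your proposal is correct and follows essentially the same route as the paper: decompose $(QP_5)_{64}$ by weight vector via Lemma~\ref{bdd5}, read off the first two summands from Theorems~\ref{mdd51} and~\ref{mdd52}, and compute the third via Proposition~\ref{mdda61} by bounding $B_5^+((4)|^2|(3)|^2|(1))$ above using Lemmas~\ref{bda72}, \ref{bda73}, \ref{bdaa61} and then proving linear independence through the $p_{(i;I)}$ together with an explicit determination of $\widetilde{SF}_5(\omega)$ of dimension $20$. One small correction: the $0$-part $\dim QP_5^0((4)|^2|(3)|^2|(1)) = 5\cdot 35$ follows from the general splitting in Proposition~\ref{mdbs} (or directly from the description of $B_5^0$ via $\theta_{J_i}$), not from Proposition~\ref{mdmo}, which concerns monomials of the form $x_t^{2^d-1}\theta_{J_t}(u)$ and is used instead inside the $+$-part argument.
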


\subsubsection{The admissible monomials of weight vector $(4)|^2|(3)|^{2}|(1)$ in $P_5$}\label{sss52}\

\bigskip
$B_4((4)|^2|(3)|^{2}|(1)) = B_4^+((4)|^2|(3)|^{2}|(1))$ is the set of 35 monomials $u_t,\, 1 \leqslant t \leqslant 35$, which are determined as follows:

\bigskip
\centerline{\begin{tabular}{llll}  
$u_{1} =  x_1^{3}x_2^{15}x_3^{15}x_4^{31}$& $u_{2} =  x_1^{3}x_2^{15}x_3^{31}x_4^{15}$& $u_{3} =  x_1^{3}x_2^{31}x_3^{15}x_4^{15} $\cr  $u_{4} =  x_1^{7}x_2^{11}x_3^{15}x_4^{31}$& $u_{5} =  x_1^{7}x_2^{11}x_3^{31}x_4^{15}$& $u_{6} =  x_1^{7}x_2^{15}x_3^{11}x_4^{31} $\cr $u_{7} =  x_1^{7}x_2^{15}x_3^{15}x_4^{27}$& $u_{8} =  x_1^{7}x_2^{15}x_3^{27}x_4^{15}$& $u_{9} =  x_1^{7}x_2^{15}x_3^{31}x_4^{11} $\cr  $u_{10} =  x_1^{7}x_2^{27}x_3^{15}x_4^{15}$& $u_{11} =  x_1^{7}x_2^{31}x_3^{11}x_4^{15}$& $u_{12} =  x_1^{7}x_2^{31}x_3^{15}x_4^{11} $\cr $u_{13} =  x_1^{15}x_2^{3}x_3^{15}x_4^{31}$& $u_{14} =  x_1^{15}x_2^{3}x_3^{31}x_4^{15}$& $u_{15} =  x_1^{15}x_2^{7}x_3^{11}x_4^{31} $\cr  $u_{16} =  x_1^{15}x_2^{7}x_3^{15}x_4^{27}$& $u_{17} =  x_1^{15}x_2^{7}x_3^{27}x_4^{15}$& $u_{18} =  x_1^{15}x_2^{7}x_3^{31}x_4^{11} $\cr $u_{19} =  x_1^{15}x_2^{15}x_3^{3}x_4^{31}$& $u_{20} =  x_1^{15}x_2^{15}x_3^{7}x_4^{27}$& $u_{21} =  x_1^{15}x_2^{15}x_3^{15}x_4^{19} $\cr  $u_{22} =  x_1^{15}x_2^{15}x_3^{19}x_4^{15}$& $u_{23} =  x_1^{15}x_2^{15}x_3^{23}x_4^{11}$& $u_{24} =  x_1^{15}x_2^{15}x_3^{31}x_4^{3} $\cr  
$u_{25} =  x_1^{15}x_2^{23}x_3^{11}x_4^{15}$& $u_{26} =  x_1^{15}x_2^{23}x_3^{15}x_4^{11}$& $u_{27} =  x_1^{15}x_2^{31}x_3^{3}x_4^{15} $\cr  
\end{tabular}}
\centerline{\begin{tabular}{lll}  
$u_{28} =  x_1^{15}x_2^{31}x_3^{7}x_4^{11}$& $u_{29} =  x_1^{15}x_2^{31}x_3^{15}x_4^{3}$& $u_{30} =  x_1^{31}x_2^{3}x_3^{15}x_4^{15} $\cr  $u_{31} =  x_1^{31}x_2^{7}x_3^{11}x_4^{15}$& $u_{32} =  x_1^{31}x_2^{7}x_3^{15}x_4^{11}$& $u_{33} =  x_1^{31}x_2^{15}x_3^{3}x_4^{15} $\cr  $u_{34} =  x_1^{31}x_2^{15}x_3^{7}x_4^{11}$& $u_{35} =  x_1^{31}x_2^{15}x_3^{15}x_4^{3}$& \cr 
\end{tabular}}

$$B_5^0((4)|^2|(3)|^2|(1)) =  \{\theta_{J_i}(y): y\in B_4((4)|^2|(3)|^2|(1)),\, 1\leqslant i \leqslant 5\}.$$ 

For a weight vector $\omega$ and a positive integer $d$, set $$ B(d;\omega) = \{x_i^{2^d-1}\theta_{J_i}(u): u\in B_4(\omega),\, 1\leqslant i \leqslant 5\}.$$ 

It is easy to see that 
$$B(5;(3)|^2|(2)|^2)\cup B(4;(3)|^2|(2)|^2|(1)) \cup B(2;(3)|^4|(1))\subset B_5^+((4)|^2|(3)|^2|(1)).$$  From \cite{su2} we have  $|B_4((3)|^2|(2)|^2)| = 67$ and $B(5;(3)|^2|(2)|^2)$ is the set of 335 monomials $g_j,\, 1\leqslant j \leqslant 335$, which are determined as follows:

\medskip
\centerline{% [inline block 1: 10 envs, 46350 chars -> data_tex | \begin{tabular}{lll}   1.\, $x_1x_2^{2}x_3^{15}x_4^{15}x_5^{31}$& 2.\, $x_1x_2^{2}x_3^{15}x_4^{31}x_5^{15}$& 3.\, $x_1x_...]
}
\subsubsection{Generators of $\widetilde {SF}_5((4)|^2|(3)|^2(1))$}\label{sss53}
\begin{align*}
\tilde\theta_{331} &= x_1x_2^{7}x_3^{11}x_4^{14}x_5^{31} + x_1x_2^{7}x_3^{14}x_4^{11}x_5^{31} + x_1^{3}x_2^{5}x_3^{11}x_4^{14}x_5^{31}\\ &\quad + x_1^{3}x_2^{5}x_3^{14}x_4^{11}x_5^{31} + x_1^{7}x_2x_3^{11}x_4^{14}x_5^{31} + x_1^{7}x_2x_3^{14}x_4^{11}x_5^{31}\\ &\quad + x_1^{7}x_2^{7}x_3^{8}x_4^{11}x_5^{31} + x_1^{7}x_2^{7}x_3^{9}x_4^{10}x_5^{31} + x_1^{7}x_2^{7}x_3^{11}x_4^{8}x_5^{31},\\  
\tilde\theta_{332} &=  x_1x_2^{7}x_3^{11}x_4^{31}x_5^{14} + x_1x_2^{7}x_3^{14}x_4^{31}x_5^{11} + x_1^{3}x_2^{5}x_3^{11}x_4^{31}x_5^{14}\\ &\quad + x_1^{3}x_2^{5}x_3^{14}x_4^{31}x_5^{11} + x_1^{7}x_2x_3^{11}x_4^{31}x_5^{14} + x_1^{7}x_2x_3^{14}x_4^{31}x_5^{11} \\ &\quad+ x_1^{7}x_2^{7}x_3^{8}x_4^{31}x_5^{11} + x_1^{7}x_2^{7}x_3^{9}x_4^{31}x_5^{10} + x_1^{7}x_2^{7}x_3^{11}x_4^{31}x_5^{8}, \\  
\tilde\theta_{333} &= 
x_1x_2^{7}x_3^{31}x_4^{11}x_5^{14} + x_1x_2^{7}x_3^{31}x_4^{14}x_5^{11} + x_1^{3}x_2^{5}x_3^{31}x_4^{11}x_5^{14}\\ &\quad + x_1^{3}x_2^{5}x_3^{31}x_4^{14}x_5^{11} + x_1^{7}x_2x_3^{31}x_4^{11}x_5^{14} + x_1^{7}x_2x_3^{31}x_4^{14}x_5^{11}\\ &\quad + x_1^{7}x_2^{7}x_3^{31}x_4^{8}x_5^{11} + x_1^{7}x_2^{7}x_3^{31}x_4^{9}x_5^{10} + x_1^{7}x_2^{7}x_3^{31}x_4^{11}x_5^{8}, \\  
\tilde\theta_{334} &= x_1x_2^{31}x_3^{7}x_4^{11}x_5^{14} + x_1x_2^{31}x_3^{7}x_4^{14}x_5^{11} + x_1^{3}x_2^{31}x_3^{5}x_4^{11}x_5^{14}\\ &\quad + x_1^{3}x_2^{31}x_3^{5}x_4^{14}x_5^{11} + x_1^{7}x_2^{31}x_3x_4^{11}x_5^{14} + x_1^{7}x_2^{31}x_3x_4^{14}x_5^{11}\\ &\quad + x_1^{7}x_2^{31}x_3^{7}x_4^{8}x_5^{11} + x_1^{7}x_2^{31}x_3^{7}x_4^{9}x_5^{10} + x_1^{7}x_2^{31}x_3^{7}x_4^{11}x_5^{8}, \\  
\tilde\theta_{335} &= x_1^{31}x_2x_3^{7}x_4^{11}x_5^{14} + x_1^{31}x_2x_3^{7}x_4^{14}x_5^{11} + x_1^{31}x_2^{3}x_3^{5}x_4^{11}x_5^{14}\\ &\quad + x_1^{31}x_2^{3}x_3^{5}x_4^{14}x_5^{11} + x_1^{31}x_2^{7}x_3x_4^{11}x_5^{14} + x_1^{31}x_2^{7}x_3x_4^{14}x_5^{11}\\ &\quad + x_1^{31}x_2^{7}x_3^{7}x_4^{8}x_5^{11} + x_1^{31}x_2^{7}x_3^{7}x_4^{9}x_5^{10} + x_1^{31}x_2^{7}x_3^{7}x_4^{11}x_5^{8},\\  
\tilde\theta_{916} &= x_1x_2^{7}x_3^{11}x_4^{15}x_5^{30} + x_1x_2^{7}x_3^{11}x_4^{15}x_5^{30} + x_1x_2^{7}x_3^{14}x_4^{15}x_5^{27}\\ &\quad + x_1^{3}x_2^{5}x_3^{11}x_4^{15}x_5^{30} + x_1^{3}x_2^{5}x_3^{14}x_4^{15}x_5^{27} + x_1^{7}x_2x_3^{11}x_4^{15}x_5^{30}\\ &\quad + x_1^{7}x_2x_3^{14}x_4^{15}x_5^{27} + x_1^{7}x_2^{7}x_3^{8}x_4^{15}x_5^{27}\\ &\quad + x_1^{7}x_2^{7}x_3^{9}x_4^{15}x_5^{26} + x_1^{7}x_2^{7}x_3^{11}x_4^{15}x_5^{24},\\  
\tilde\theta_{917} &= x_1x_2^{7}x_3^{15}x_4^{11}x_5^{30} + x_1x_2^{7}x_3^{15}x_4^{14}x_5^{27} + x_1^{3}x_2^{5}x_3^{15}x_4^{11}x_5^{30}\\ &\quad + x_1^{3}x_2^{5}x_3^{15}x_4^{14}x_5^{27} + x_1^{7}x_2x_3^{15}x_4^{11}x_5^{30} + x_1^{7}x_2x_3^{15}x_4^{14}x_5^{27}\\ &\quad + x_1^{7}x_2^{7}x_3^{15}x_4^{8}x_5^{27} + x_1^{7}x_2^{7}x_3^{15}x_4^{9}x_5^{26} + x_1^{7}x_2^{7}x_3^{15}x_4^{11}x_5^{24},\\  
\tilde\theta_{918} &= x_1x_2^{7}x_3^{15}x_4^{27}x_5^{14} + x_1x_2^{7}x_3^{15}x_4^{30}x_5^{11} + x_1^{3}x_2^{5}x_3^{15}x_4^{27}x_5^{14}\\ &\quad + x_1^{3}x_2^{5}x_3^{15}x_4^{30}x_5^{11} + x_1^{7}x_2x_3^{15}x_4^{27}x_5^{14} + x_1^{7}x_2x_3^{15}x_4^{30}x_5^{11}\\ &\quad + x_1^{7}x_2^{7}x_3^{15}x_4^{24}x_5^{11} + x_1^{7}x_2^{7}x_3^{15}x_4^{25}x_5^{10} + x_1^{7}x_2^{7}x_3^{15}x_4^{27}x_5^{8},\\  
\tilde\theta_{919} &= x_1^{3}x_2^{3}x_3^{15}x_4^{29}x_5^{14} + x_1^{3}x_2^{13}x_3^{15}x_4^{19}x_5^{14} + x_1^{7}x_2^{3}x_3^{15}x_4^{25}x_5^{14} + x_1^{7}x_2^{9}x_3^{15}x_4^{19}x_5^{14},\\  
\tilde\theta_{920} &= x_1x_2^{15}x_3^{7}x_4^{11}x_5^{30} + x_1x_2^{15}x_3^{7}x_4^{14}x_5^{27} + x_1^{3}x_2^{15}x_3^{5}x_4^{11}x_5^{30}\\ &\quad + x_1^{3}x_2^{15}x_3^{5}x_4^{14}x_5^{27} + x_1^{7}x_2^{15}x_3x_4^{11}x_5^{30} + x_1^{7}x_2^{15}x_3x_4^{14}x_5^{27}\\ &\quad + x_1^{7}x_2^{15}x_3^{7}x_4^{8}x_5^{27} + x_1^{7}x_2^{15}x_3^{7}x_4^{9}x_5^{26} + x_1^{7}x_2^{15}x_3^{7}x_4^{11}x_5^{24},\\  
\tilde\theta_{921} &= x_1x_2^{15}x_3^{7}x_4^{27}x_5^{14} + x_1x_2^{15}x_3^{7}x_4^{30}x_5^{11} + x_1^{3}x_2^{15}x_3^{5}x_4^{27}x_5^{14}\\ &\quad + x_1^{3}x_2^{15}x_3^{5}x_4^{30}x_5^{11} + x_1^{7}x_2^{15}x_3x_4^{27}x_5^{14} + x_1^{7}x_2^{15}x_3x_4^{30}x_5^{11}\\ &\quad + x_1^{7}x_2^{15}x_3^{7}x_4^{24}x_5^{11} + x_1^{7}x_2^{15}x_3^{7}x_4^{25}x_5^{10} + x_1^{7}x_2^{15}x_3^{7}x_4^{27}x_5^{8},\\  
\tilde\theta_{922} &= x_1^{3}x_2^{15}x_3^{3}x_4^{29}x_5^{14} + x_1^{3}x_2^{15}x_3^{13}x_4^{19}x_5^{14} + x_1^{7}x_2^{15}x_3^{3}x_4^{25}x_5^{14} + x_1^{7}x_2^{15}x_3^{9}x_4^{19}x_5^{14},\\  
\tilde\theta_{923} &= x_1x_2^{15}x_3^{15}x_4^{3}x_5^{30} + x_1x_2^{15}x_3^{15}x_4^{7}x_5^{26} + x_1x_2^{15}x_3^{15}x_4^{19}x_5^{14} + x_1x_2^{15}x_3^{15}x_4^{22}x_5^{11}\\ & + x_1x_2^{15}x_3^{23}x_4^{11}x_5^{14} + x_1x_2^{15}x_3^{23}x_4^{14}x_5^{11} + x_1^{3}x_2^{5}x_3^{15}x_4^{27}x_5^{14} + x_1^{3}x_2^{5}x_3^{15}x_4^{30}x_5^{11}\\ & + x_1^{3}x_2^{7}x_3^{15}x_4^{11}x_5^{28} + x_1^{3}x_2^{7}x_3^{15}x_4^{13}x_5^{26} + x_1^{3}x_2^{7}x_3^{15}x_4^{25}x_5^{14} + x_1^{3}x_2^{7}x_3^{15}x_4^{27}x_5^{12}\\ & + x_1^{3}x_2^{7}x_3^{29}x_4^{11}x_5^{14} + x_1^{3}x_2^{7}x_3^{29}x_4^{14}x_5^{11} + x_1^{3}x_2^{15}x_3^{15}x_4^{3}x_5^{28} + x_1^{3}x_2^{15}x_3^{15}x_4^{13}x_5^{18}\\ & + x_1^{3}x_2^{15}x_3^{15}x_4^{17}x_5^{14} + x_1^{3}x_2^{15}x_3^{15}x_4^{19}x_5^{12} + x_1^{7}x_2x_3^{15}x_4^{27}x_5^{14} + x_1^{7}x_2x_3^{15}x_4^{30}x_5^{11}\\ & + x_1^{7}x_2^{7}x_3^{15}x_4^{11}x_5^{24} + x_1^{7}x_2^{7}x_3^{15}x_4^{25}x_5^{10} + x_1^{7}x_2^{7}x_3^{15}x_4^{27}x_5^{8} + x_1^{7}x_2^{7}x_3^{25}x_4^{11}x_5^{14}\\ & + x_1^{7}x_2^{7}x_3^{25}x_4^{14}x_5^{11} + x_1^{7}x_2^{15}x_3^{15}x_4^{11}x_5^{16} + x_1^{7}x_2^{15}x_3^{15}x_4^{16}x_5^{11} + x_1^{7}x_2^{15}x_3^{23}x_4^{8}x_5^{11}\\ & + x_1^{7}x_2^{15}x_3^{23}x_4^{9}x_5^{10} + x_1^{7}x_2^{15}x_3^{23}x_4^{11}x_5^{8},\\  
\tilde\theta_{924} &= x_1^{15}x_2x_3^{7}x_4^{11}x_5^{30} + x_1^{15}x_2x_3^{7}x_4^{14}x_5^{27} + x_1^{15}x_2^{3}x_3^{5}x_4^{11}x_5^{30}\\ &\quad + x_1^{15}x_2^{3}x_3^{5}x_4^{14}x_5^{27} + x_1^{15}x_2^{7}x_3x_4^{11}x_5^{30} + x_1^{15}x_2^{7}x_3x_4^{14}x_5^{27}\\ &\quad + x_1^{15}x_2^{7}x_3^{7}x_4^{8}x_5^{27} + x_1^{15}x_2^{7}x_3^{7}x_4^{9}x_5^{26} + x_1^{15}x_2^{7}x_3^{7}x_4^{11}x_5^{24},\\  
\tilde\theta_{925} &= x_1^{15}x_2x_3^{7}x_4^{27}x_5^{14} + x_1^{15}x_2x_3^{7}x_4^{30}x_5^{11} + x_1^{15}x_2^{3}x_3^{5}x_4^{27}x_5^{14}\\ &\quad + x_1^{15}x_2^{3}x_3^{5}x_4^{30}x_5^{11} + x_1^{15}x_2^{7}x_3x_4^{27}x_5^{14} + x_1^{15}x_2^{7}x_3x_4^{30}x_5^{11}\\ &\quad + x_1^{15}x_2^{7}x_3^{7}x_4^{24}x_5^{11} + x_1^{15}x_2^{7}x_3^{7}x_4^{25}x_5^{10} + x_1^{15}x_2^{7}x_3^{7}x_4^{27}x_5^{8},\\  
\tilde\theta_{926} &= x_1^{15}x_2^{3}x_3^{3}x_4^{29}x_5^{14} + x_1^{15}x_2^{3}x_3^{13}x_4^{19}x_5^{14} + x_1^{15}x_2^{7}x_3^{3}x_4^{25}x_5^{14} + x_1^{15}x_2^{7}x_3^{9}x_4^{19}x_5^{14},\\  
\tilde\theta_{927} &= x_1^{7}x_2x_3^{15}x_4^{27}x_5^{14} + x_1^{7}x_2x_3^{15}x_4^{30}x_5^{11} + x_1^{7}x_2^{3}x_3^{15}x_4^{11}x_5^{28} + x_1^{7}x_2^{3}x_3^{15}x_4^{13}x_5^{26}\\ & + x_1^{7}x_2^{3}x_3^{15}x_4^{25}x_5^{14} + x_1^{7}x_2^{3}x_3^{15}x_4^{27}x_5^{12} + x_1^{7}x_2^{3}x_3^{29}x_4^{11}x_5^{14} + x_1^{7}x_2^{3}x_3^{29}x_4^{14}x_5^{11}\\ & + x_1^{7}x_2^{7}x_3^{15}x_4^{11}x_5^{24} + x_1^{7}x_2^{7}x_3^{15}x_4^{24}x_5^{11} + x_1^{7}x_2^{7}x_3^{25}x_4^{11}x_5^{14} + x_1^{7}x_2^{7}x_3^{25}x_4^{14}x_5^{11}\\ & + x_1^{15}x_2x_3^{15}x_4^{3}x_5^{30} + x_1^{15}x_2x_3^{15}x_4^{7}x_5^{26} + x_1^{15}x_2x_3^{15}x_4^{19}x_5^{14} + x_1^{15}x_2x_3^{15}x_4^{22}x_5^{11}\\ & + x_1^{15}x_2x_3^{23}x_4^{11}x_5^{14} + x_1^{15}x_2x_3^{23}x_4^{14}x_5^{11} + x_1^{15}x_2^{3}x_3^{15}x_4^{3}x_5^{28} + x_1^{15}x_2^{3}x_3^{15}x_4^{13}x_5^{18}\\ & + x_1^{15}x_2^{3}x_3^{15}x_4^{17}x_5^{14} + x_1^{15}x_2^{3}x_3^{15}x_4^{19}x_5^{12} + x_1^{15}x_2^{7}x_3^{15}x_4^{11}x_5^{16} + x_1^{15}x_2^{7}x_3^{15}x_4^{16}x_5^{11}\\ & + x_1^{15}x_2^{7}x_3^{23}x_4^{8}x_5^{11} + x_1^{15}x_2^{7}x_3^{23}x_4^{9}x_5^{10} + x_1^{15}x_2^{7}x_3^{23}x_4^{11}x_5^{8},\\  
\tilde\theta_{928} &=  x_1x_2^{15}x_3^{7}x_4^{27}x_5^{14} + x_1x_2^{15}x_3^{7}x_4^{30}x_5^{11} + x_1x_2^{15}x_3^{15}x_4^{19}x_5^{14} + x_1x_2^{15}x_3^{15}x_4^{30}x_5^{3}\\ & + x_1x_2^{15}x_3^{23}x_4^{11}x_5^{14} + x_1x_2^{15}x_3^{30}x_4^{7}x_5^{11} + x_1^{3}x_2^{7}x_3^{11}x_4^{13}x_5^{30} + x_1^{3}x_2^{7}x_3^{11}x_4^{29}x_5^{14}\\ & + x_1^{3}x_2^{7}x_3^{13}x_4^{11}x_5^{30} + x_1^{3}x_2^{7}x_3^{13}x_4^{30}x_5^{11} + x_1^{3}x_2^{7}x_3^{29}x_4^{11}x_5^{14} + x_1^{3}x_2^{7}x_3^{29}x_4^{14}x_5^{11}\\ & + x_1^{3}x_2^{13}x_3^{7}x_4^{11}x_5^{30} + x_1^{3}x_2^{13}x_3^{7}x_4^{30}x_5^{11} + x_1^{3}x_2^{13}x_3^{15}x_4^{19}x_5^{14} + x_1^{3}x_2^{13}x_3^{15}x_4^{30}x_5^{3}\\ & + x_1^{3}x_2^{13}x_3^{30}x_4^{7}x_5^{11} + x_1^{3}x_2^{15}x_3^{3}x_4^{13}x_5^{30} + x_1^{3}x_2^{15}x_3^{3}x_4^{29}x_5^{14} + x_1^{3}x_2^{15}x_3^{7}x_4^{11}x_5^{28}\\ & + x_1^{3}x_2^{15}x_3^{7}x_4^{13}x_5^{26} + x_1^{3}x_2^{15}x_3^{7}x_4^{25}x_5^{14} + x_1^{3}x_2^{15}x_3^{7}x_4^{27}x_5^{12} + x_1^{3}x_2^{15}x_3^{13}x_4^{3}x_5^{30}\\ & + x_1^{3}x_2^{15}x_3^{13}x_4^{7}x_5^{26} + x_1^{3}x_2^{15}x_3^{13}x_4^{22}x_5^{11} + x_1^{3}x_2^{15}x_3^{13}x_4^{30}x_5^{3} + x_1^{3}x_2^{15}x_3^{15}x_4^{3}x_5^{28}\\ & + x_1^{3}x_2^{15}x_3^{15}x_4^{13}x_5^{18} + x_1^{3}x_2^{15}x_3^{15}x_4^{17}x_5^{14} + x_1^{3}x_2^{15}x_3^{15}x_4^{19}x_5^{12} + x_1^{3}x_2^{15}x_3^{21}x_4^{11}x_5^{14}\\ & + x_1^{3}x_2^{15}x_3^{21}x_4^{14}x_5^{11} + x_1^{3}x_2^{15}x_3^{23}x_4^{11}x_5^{12} + x_1^{7}x_2^{3}x_3^{11}x_4^{13}x_5^{30} + x_1^{7}x_2^{3}x_3^{11}x_4^{29}x_5^{14}\\ & + x_1^{7}x_2^{3}x_3^{27}x_4^{13}x_5^{14} + x_1^{7}x_2^{7}x_3^{9}x_4^{11}x_5^{30} + x_1^{7}x_2^{7}x_3^{9}x_4^{30}x_5^{11} + x_1^{7}x_2^{7}x_3^{11}x_4^{9}x_5^{30}\\ & + x_1^{7}x_2^{7}x_3^{11}x_4^{11}x_5^{28} + x_1^{7}x_2^{7}x_3^{11}x_4^{13}x_5^{26} + x_1^{7}x_2^{7}x_3^{11}x_4^{29}x_5^{10} + x_1^{7}x_2^{7}x_3^{15}x_4^{9}x_5^{26}\\ & + x_1^{7}x_2^{7}x_3^{25}x_4^{14}x_5^{11} + x_1^{7}x_2^{7}x_3^{27}x_4^{11}x_5^{12} + x_1^{7}x_2^{9}x_3^{7}x_4^{11}x_5^{30} + x_1^{7}x_2^{9}x_3^{7}x_4^{27}x_5^{14}\\ & + x_1^{7}x_2^{9}x_3^{23}x_4^{11}x_5^{14} + x_1^{7}x_2^{11}x_3^{3}x_4^{13}x_5^{30} + x_1^{7}x_2^{11}x_3^{3}x_4^{29}x_5^{14} + x_1^{7}x_2^{11}x_3^{7}x_4^{11}x_5^{28}\\ & + x_1^{7}x_2^{11}x_3^{7}x_4^{13}x_5^{26} + x_1^{7}x_2^{11}x_3^{7}x_4^{25}x_5^{14} + x_1^{7}x_2^{11}x_3^{7}x_4^{27}x_5^{12} + x_1^{7}x_2^{11}x_3^{13}x_4^{3}x_5^{30}\\ & + x_1^{7}x_2^{11}x_3^{13}x_4^{7}x_5^{26} + x_1^{7}x_2^{11}x_3^{13}x_4^{22}x_5^{11} + x_1^{7}x_2^{11}x_3^{13}x_4^{30}x_5^{3} + x_1^{7}x_2^{11}x_3^{15}x_4^{3}x_5^{28}\\ & + x_1^{7}x_2^{11}x_3^{15}x_4^{13}x_5^{18} + x_1^{7}x_2^{11}x_3^{15}x_4^{17}x_5^{14} + x_1^{7}x_2^{11}x_3^{15}x_4^{19}x_5^{12} + x_1^{7}x_2^{11}x_3^{21}x_4^{11}x_5^{14}\\ & + x_1^{7}x_2^{11}x_3^{21}x_4^{14}x_5^{11} + x_1^{7}x_2^{11}x_3^{23}x_4^{11}x_5^{12} + x_1^{15}x_2x_3^{7}x_4^{27}x_5^{14} + x_1^{15}x_2x_3^{7}x_4^{30}x_5^{11}\\ & + x_1^{15}x_2x_3^{15}x_4^{19}x_5^{14} + x_1^{15}x_2x_3^{15}x_4^{30}x_5^{3} + x_1^{15}x_2x_3^{23}x_4^{11}x_5^{14} + x_1^{15}x_2x_3^{30}x_4^{7}x_5^{11}\\ & + x_1^{15}x_2^{3}x_3^{3}x_4^{13}x_5^{30} + x_1^{15}x_2^{3}x_3^{3}x_4^{29}x_5^{14} + x_1^{15}x_2^{3}x_3^{7}x_4^{11}x_5^{28} + x_1^{15}x_2^{3}x_3^{7}x_4^{13}x_5^{26}\\ & + x_1^{15}x_2^{3}x_3^{7}x_4^{25}x_5^{14} + x_1^{15}x_2^{3}x_3^{7}x_4^{27}x_5^{12} + x_1^{15}x_2^{3}x_3^{13}x_4^{3}x_5^{30} + x_1^{15}x_2^{3}x_3^{13}x_4^{7}x_5^{26}\\ & + x_1^{15}x_2^{3}x_3^{13}x_4^{22}x_5^{11} + x_1^{15}x_2^{3}x_3^{13}x_4^{30}x_5^{3} + x_1^{15}x_2^{3}x_3^{15}x_4^{3}x_5^{28} + x_1^{15}x_2^{3}x_3^{15}x_4^{13}x_5^{18}\\ & + x_1^{15}x_2^{3}x_3^{15}x_4^{17}x_5^{14} + x_1^{15}x_2^{3}x_3^{15}x_4^{19}x_5^{12} + x_1^{15}x_2^{3}x_3^{21}x_4^{11}x_5^{14} + x_1^{15}x_2^{3}x_3^{21}x_4^{14}x_5^{11}\\ & + x_1^{15}x_2^{3}x_3^{23}x_4^{11}x_5^{12} + x_1^{15}x_2^{15}x_3x_4^{19}x_5^{14} + x_1^{15}x_2^{15}x_3x_4^{30}x_5^{3} + x_1^{15}x_2^{15}x_3^{3}x_4^{3}x_5^{28}\\ & + x_1^{15}x_2^{15}x_3^{3}x_4^{19}x_5^{12} + x_1^{15}x_2^{15}x_3^{7}x_4^{11}x_5^{16} + x_1^{15}x_2^{15}x_3^{7}x_4^{16}x_5^{11} + x_1^{15}x_2^{15}x_3^{15}x_4^{3}x_5^{16}\\ & + x_1^{15}x_2^{15}x_3^{15}x_4^{16}x_5^{3} + x_1^{15}x_2^{15}x_3^{16}x_4^{7}x_5^{11},\\  
\tilde\theta_{929} &= x_1x_2^{15}x_3^{15}x_4^{19}x_5^{14} + x_1x_2^{15}x_3^{15}x_4^{30}x_5^{3} + x_1^{3}x_2^{3}x_3^{15}x_4^{13}x_5^{30} + x_1^{3}x_2^{7}x_3^{11}x_4^{13}x_5^{30}\\ & + x_1^{3}x_2^{7}x_3^{11}x_4^{29}x_5^{14} + x_1^{3}x_2^{7}x_3^{13}x_4^{11}x_5^{30} + x_1^{3}x_2^{7}x_3^{13}x_4^{27}x_5^{14} + x_1^{3}x_2^{13}x_3^{7}x_4^{11}x_5^{30}\\ & + x_1^{3}x_2^{13}x_3^{7}x_4^{27}x_5^{14} + x_1^{3}x_2^{13}x_3^{15}x_4^{30}x_5^{3} + x_1^{3}x_2^{15}x_3^{3}x_4^{13}x_5^{30} + x_1^{3}x_2^{15}x_3^{13}x_4^{30}x_5^{3}\\ & + x_1^{3}x_2^{15}x_3^{15}x_4^{3}x_5^{28} + x_1^{3}x_2^{15}x_3^{15}x_4^{19}x_5^{12} + x_1^{7}x_2^{3}x_3^{11}x_4^{13}x_5^{30} + x_1^{7}x_2^{3}x_3^{11}x_4^{29}x_5^{14}\\ & + x_1^{7}x_2^{3}x_3^{15}x_4^{25}x_5^{14} + x_1^{7}x_2^{7}x_3^{9}x_4^{11}x_5^{30} + x_1^{7}x_2^{7}x_3^{9}x_4^{27}x_5^{14} + x_1^{7}x_2^{7}x_3^{11}x_4^{9}x_5^{30}\\ & + x_1^{7}x_2^{7}x_3^{11}x_4^{13}x_5^{26} + x_1^{7}x_2^{7}x_3^{11}x_4^{25}x_5^{14} + x_1^{7}x_2^{7}x_3^{11}x_4^{29}x_5^{10} + x_1^{7}x_2^{7}x_3^{15}x_4^{9}x_5^{26}\\ & + x_1^{7}x_2^{7}x_3^{27}x_4^{11}x_5^{12} + x_1^{7}x_2^{9}x_3^{7}x_4^{11}x_5^{30} + x_1^{7}x_2^{9}x_3^{7}x_4^{27}x_5^{14} + x_1^{7}x_2^{9}x_3^{15}x_4^{19}x_5^{14}\\ & + x_1^{7}x_2^{11}x_3^{3}x_4^{13}x_5^{30} + x_1^{7}x_2^{11}x_3^{3}x_4^{29}x_5^{14} + x_1^{7}x_2^{11}x_3^{7}x_4^{11}x_5^{28} + x_1^{7}x_2^{11}x_3^{7}x_4^{13}x_5^{26}\\ & + x_1^{7}x_2^{11}x_3^{13}x_4^{3}x_5^{30} + x_1^{7}x_2^{11}x_3^{13}x_4^{7}x_5^{26} + x_1^{7}x_2^{11}x_3^{13}x_4^{19}x_5^{14} + x_1^{7}x_2^{11}x_3^{13}x_4^{30}x_5^{3}\\ & + x_1^{7}x_2^{11}x_3^{15}x_4^{3}x_5^{28} + x_1^{7}x_2^{11}x_3^{15}x_4^{19}x_5^{12} + x_1^{7}x_2^{15}x_3^{3}x_4^{25}x_5^{14} + x_1^{7}x_2^{15}x_3^{7}x_4^{9}x_5^{26}\\ & + x_1^{7}x_2^{15}x_3^{9}x_4^{19}x_5^{14} + x_1^{7}x_2^{15}x_3^{11}x_4^{3}x_5^{28} + x_1^{7}x_2^{15}x_3^{11}x_4^{19}x_5^{12} + x_1^{7}x_2^{27}x_3^{7}x_4^{11}x_5^{12}\\ & + x_1^{15}x_2x_3^{7}x_4^{11}x_5^{30} + x_1^{15}x_2x_3^{7}x_4^{14}x_5^{27} + x_1^{15}x_2x_3^{15}x_4^{19}x_5^{14} + x_1^{15}x_2x_3^{15}x_4^{30}x_5^{3}\\ & + x_1^{15}x_2^{3}x_3^{3}x_4^{13}x_5^{30} + x_1^{15}x_2^{3}x_3^{5}x_4^{11}x_5^{30} + x_1^{15}x_2^{3}x_3^{5}x_4^{14}x_5^{27} + x_1^{15}x_2^{3}x_3^{13}x_4^{30}x_5^{3}\\ & + x_1^{15}x_2^{3}x_3^{15}x_4^{3}x_5^{28} + x_1^{15}x_2^{3}x_3^{15}x_4^{19}x_5^{12} + x_1^{15}x_2^{7}x_3x_4^{11}x_5^{30} + x_1^{15}x_2^{7}x_3x_4^{14}x_5^{27}\\ & + x_1^{15}x_2^{7}x_3^{3}x_4^{25}x_5^{14} + x_1^{15}x_2^{7}x_3^{7}x_4^{8}x_5^{27} + x_1^{15}x_2^{7}x_3^{9}x_4^{19}x_5^{14} + x_1^{15}x_2^{7}x_3^{11}x_4^{3}x_5^{28}\\ & + x_1^{15}x_2^{7}x_3^{11}x_4^{19}x_5^{12} + x_1^{15}x_2^{7}x_3^{19}x_4^{11}x_5^{12} + x_1^{15}x_2^{15}x_3x_4^{19}x_5^{14} + x_1^{15}x_2^{15}x_3x_4^{30}x_5^{3}\\ & + x_1^{15}x_2^{15}x_3^{3}x_4^{3}x_5^{28} + x_1^{15}x_2^{15}x_3^{3}x_4^{19}x_5^{12} + x_1^{15}x_2^{15}x_3^{15}x_4^{3}x_5^{16} + x_1^{15}x_2^{15}x_3^{15}x_4^{16}x_5^{3}\\ & + x_1^{15}x_2^{19}x_3^{7}x_4^{11}x_5^{12},\\  
\tilde\theta_{930} &= x_1^{3}x_2^{15}x_3^{7}x_4^{11}x_5^{28} + x_1^{3}x_2^{15}x_3^{7}x_4^{13}x_5^{26} + x_1^{3}x_2^{15}x_3^{7}x_4^{25}x_5^{14} + x_1^{3}x_2^{15}x_3^{7}x_4^{27}x_5^{12}\\ & + x_1^{3}x_2^{29}x_3^{7}x_4^{11}x_5^{14} + x_1^{3}x_2^{29}x_3^{7}x_4^{14}x_5^{11} + x_1^{7}x_2^{15}x_3^{3}x_4^{11}x_5^{28} + x_1^{7}x_2^{15}x_3^{3}x_4^{13}x_5^{26}\\ & + x_1^{7}x_2^{15}x_3^{3}x_4^{25}x_5^{14} + x_1^{7}x_2^{15}x_3^{3}x_4^{27}x_5^{12} + x_1^{7}x_2^{15}x_3^{7}x_4^{24}x_5^{11} + x_1^{7}x_2^{15}x_3^{7}x_4^{25}x_5^{10}\\ & + x_1^{7}x_2^{15}x_3^{7}x_4^{27}x_5^{8} + x_1^{7}x_2^{15}x_3^{23}x_4^{8}x_5^{11} + x_1^{7}x_2^{15}x_3^{23}x_4^{9}x_5^{10} + x_1^{7}x_2^{15}x_3^{23}x_4^{11}x_5^{8}\\ & + x_1^{7}x_2^{27}x_3^{5}x_4^{11}x_5^{14} + x_1^{7}x_2^{27}x_3^{5}x_4^{14}x_5^{11} + x_1^{15}x_2^{15}x_3x_4^{3}x_5^{30} + x_1^{15}x_2^{15}x_3x_4^{7}x_5^{26}\\ & + x_1^{15}x_2^{15}x_3x_4^{19}x_5^{14} + x_1^{15}x_2^{15}x_3x_4^{22}x_5^{11} + x_1^{15}x_2^{15}x_3^{3}x_4^{3}x_5^{28} + x_1^{15}x_2^{15}x_3^{3}x_4^{13}x_5^{18}\\ & + x_1^{15}x_2^{15}x_3^{3}x_4^{17}x_5^{14} + x_1^{15}x_2^{15}x_3^{3}x_4^{19}x_5^{12} + x_1^{15}x_2^{15}x_3^{7}x_4^{11}x_5^{16} + x_1^{15}x_2^{15}x_3^{7}x_4^{16}x_5^{11}\\ & + x_1^{15}x_2^{23}x_3x_4^{11}x_5^{14} + x_1^{15}x_2^{23}x_3x_4^{14}x_5^{11} + x_1^{15}x_2^{23}x_3^{7}x_4^{8}x_5^{11} + x_1^{15}x_2^{23}x_3^{7}x_4^{9}x_5^{10}\\ & + x_1^{15}x_2^{23}x_3^{7}x_4^{11}x_5^{8}.
\end{align*}

\bigskip
\subsection{Computation of $(QP_5)_{128}$}\

\medskip
By Lemma \ref{bdd5}, we have
$$ (QP_5)_{128} \cong QP_5((2)|(1)|^6)\bigoplus QP_5((4)|(2)|^5) \bigoplus QP_5((4)|^2(3)|^3|(1)).$$
By Theorems \ref{mdd51} and \ref{mdd52}, we have
$$\dim QP_5((2)|(1)|^4)=124,\ \dim QP_5((4)|(2)|^4) = 465.$$
Hence, we need to compute $QP_5((4)|^2(3)|^3|(1))$.

\subsubsection{Computation of $QP_5((4)|^2(3)|^3|(1))$}\

\medskip
By using a result in \cite{su2}, we have $B_4((4)|^2(3)|^3|(1)) = B_4^+((4)|^2(3)|^3|(1))$ and $|B_4^+((4)|^2(3)|^3|(1))| = 45$. This implies $\dim QP_5^0((4)|^2(3)|^2|(1)) = 5 \times 45 = 225.$ Hence, we need only to compute $QP_5^+((4)|^2(3)|^2|(1))$.
We prove the following result.

\begin{props}\label{mddb61}\
	
\medskip
{\rm i)} There exist exactly $1395$	classes in $\widetilde{QP}_5((4)|^2(3)|^3|(1))$ which are presented by 
admissible monomials of degree $128$ in $P_5$. 

{\rm ii)} $\dim \widetilde{SF}_5((4)|^2|(3)|^3|(1))\leqslant 6$.    Consequently, 
	$$1395 \leqslant \dim\widetilde{QP}_5((4)|^2|(3)|^{3}|(1)) \leqslant 1401.$$
\end{props}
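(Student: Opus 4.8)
\textbf{Proof proposal for Proposition \ref{mddb61}.}

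The plan is to follow the same two-stage template used for the weight vectors $(4)|(2)|^{d-2}$ and $(4)|^2|(3)|^{d-4}$, adapted to the extra bottom block $(1)$ at the end. First I would reduce the problem to the purely block-stripped weight vector. Writing $\omega := (4)|^2|(3)|^3|(1)$, any admissible $y\in P_5^+(\omega)$ has $y=X_i\bar x^2 x_t^{126}$ for some $1\leqslant i,t\leqslant 5$ and $\bar x$ an admissible monomial of weight vector $(4)|(3)|^3$, and by the uniqueness of the ``big variables'' sequence $J_{\bar x}$ (the analogue of \cite[Proposition 3.3.1]{sux}) one gets $x_t=\theta_{J_{\bar x}}(x_u)$ for a suitable $u\in\{1,2,3\}$. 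Thus the admissible set is contained in $\{\,x\,\theta_{J_x}(x_u^{2^{d-2}}) : x\in B_5((4)|^2|(3)|^3),\ u=1,2,3\,\}$ exactly as in Theorem \ref{thd73}, and part (i) amounts to showing that the $1395$ corresponding classes are linearly independent in $\widetilde{QP}_5(\omega)$. For the $0$-part there are $225$ such classes and for the $+$-part $1170$; since $\nu$ separates $\bar A(d)$-type monomials from $\bar B(d)$-type monomials, the independence argument again splits into a Mothebe--Uys piece (handled by Proposition \ref{mdmo}) and a genuine linear-algebra piece.

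The core computational step is to set up a hypothetical relation $\mathcal S=\sum \gamma_{x,u}\,x\,\theta_{J_x}(x_u^{126})\equiv_\omega 0$ and hit it with all the homomorphisms $p_{(i;I)}:P_5\to P_4$ for $(i;I)\in\mathcal N_5$, expressing each $p_{(i;I)}(\mathcal S)$ in terms of the known basis $B_4((4)|^2|(3)|^3|(1))$ modulo $P_4^-(\omega)+\mathcal A^+P_4$ (using Lemma \ref{bdm} and the explicit list $B_4^+((4)|^2|(3)|^3|(1))$ of $45$ monomials from \cite{su2}). Running through $p_{(i;j)}$, then $p_{(i;(u,v))}$, then $p_{(i;(u,v,w))}$ and finally $p_{(1;(2,3,4,5))}$ — in the same staged order as in the proof of Proposition \ref{mdd72} — forces all but a handful of the $\gamma_{x,u}$ to vanish and pins down the remaining coincidences $\gamma_s=\gamma_t$. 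The monomials surviving every projection span the spike-free part $\widetilde{SF}_5(\omega)$; by direct inspection of which of the $1170+225$ monomials are killed by all $p_{(i;I)}$ one reads off the bound $\dim\widetilde{SF}_5(\omega)\leqslant 6$, which combined with the $1395$ independent classes gives $QP_5(\omega)\cong\widetilde{QP}_5(\omega)\oplus\widetilt{SF}_5(\omega)$ and hence $1395\leqslant\dim\widetilde{QP}_5(\omega)\leqslant 1401$. To get the lower bound on $\widetilde{QP}_5(\omega)$ one uses, as before, that the leading monomials of the few candidate spike-free generators are admissible (Proposition \ref{mdmo} applied to admissible monomials such as $x_1^7x_2^7x_3^{11}x_4^8$-type patterns in $P_4$), so those generators do not collapse the $1395$ classes.

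Two preparatory lemmas of the ``strictly inadmissible'' type will be needed, in the spirit of Lemmas \ref{bda72}--\ref{bda75} and \ref{bdaa61}: one listing the monomials of weight vector $(4)|^2|(3)|^3$ that lie outside the candidate admissible set, and one of the form $x_r^{63}\theta_{J_r}(w)$ together with a short list of five-variable monomials with a single explicit $Sq^1,Sq^2,Sq^4,Sq^8,Sq^{16}$ straightening relation each; Theorem \ref{dlcb1} then propagates strict inadmissibility to the relevant $x=wz_1^{2^u}$ with $2\leqslant u\leqslant 5$. I expect the main obstacle to be exactly the same one flagged by the authors for $(QP_5)_{32}$ and $(QP_5)_{64}$: verifying by hand that the surviving spike-free generators are linearly independent (so that the upper bound $6$ is attained) rather than merely spanning. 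The projections $p_{(i;I)}$ only show $\dim\widetilde{SF}_5(\omega)\leqslant 6$; proving equality would require constructing explicit polynomials $\widetilde\theta_j$ with admissible leading terms and checking independence via auxiliary substitutions $\xi:P_5\to P_5$, and the sheer size of those polynomials (as the $(QP_5)_{64}$ appendix illustrates) is why only the inequality is claimed here, leaving the exact value to a machine computation.
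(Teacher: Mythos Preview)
Your overall template is right (inadmissibility lemmas to bound the candidate set, then kill a linear relation with the projections $p_{(i;I)}$), but the specific reduction you propose is not the one the paper uses and it has a real gap.

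The paper does \emph{not} reduce $(4)|^2|(3)|^3|(1)$ to $B_5((4)|^2|(3)|^3)$ via the tripling construction of Theorem~\ref{thd73}. That theorem is stated only for $d\geqslant 8$, precisely because its input $B_5((4)|^2|(3)|^{d-4})$ is supplied by Proposition~\ref{mdd72}, which also requires $d\geqslant 8$. Your plan presupposes that $B_5((4)|^2|(3)|^3)$ is known and that each of its elements has a unique ``big-variable'' triple $J_x$; neither fact is available in the paper, so this is circular. Note also that your factorisation $y=X_i\bar x^{\,2}x_t^{126}$ cannot be correct: with $\bar x$ of weight $(4)|(3)|^3$ one gets degree $4+92+126=222$, not $128$. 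The intended exponent in the Theorem~\ref{thd73} pattern is $2^{d-2}=32$, not $126$; but even with that fix you would still need the missing $d=7$ analogue of Proposition~\ref{mdd72}.

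What the paper actually does for Proposition~\ref{mddb61} is a direct computation at $d=7$. It writes an admissible $x\in P_5^+(\tilde\omega)$ as $X_ry^2$ with $y$ admissible of weight $(4)|(3)|^3|(1)$, and then uses Lemmas~\ref{bda72}, \ref{bda73}, \ref{bda74} together with a \emph{new} degree-$128$ lemma (Lemma~\ref{bdab61}, a list of $64$ strictly inadmissible monomials specific to this case) to force $x$ into an explicit list of $1176$ candidates $\tilde g_j$ in $P_5^+$, not $1170$. The projection step then shows that $p_{(i;I)}(\tilde{\mathcal S})\equiv_{\tilde\omega}0$ for all $(i;I)\in\mathcal N_5$ iff $\tilde{\mathcal S}$ reduces to a combination of six explicit polynomials $\eta_{1171},\dots,\eta_{1176}$ whose leading monomials are $\tilde g_{1171},\dots,\tilde g_{1176}$. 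This simultaneously proves that $\tilde g_1,\dots,\tilde g_{1170}$ are admissible (giving the $225+1170=1395$ classes in $\widetilde{QP}_5(\tilde\omega)$) and that $\dim\widetilde{SF}_5(\tilde\omega)\leqslant 6$. The presence of these six residual classes is exactly why the $d=7$ case does not fit the clean tripling picture you invoked: for $d\geqslant 8$ the analogous projection argument kills all coefficients, whereas here six survive.
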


\begin{lems}\label{bdab61} The following monomials are strictly inadmissible:
	
\medskip
\centerline{% [inline block 2: 3 envs, 26896 chars -> data_tex | \begin{tabular}{llll} $x_1^{3}x_2^{31}x_3^{31}x_4^{31}x_5^{32}$& $x_1^{7}x_2^{15}x_3^{23}x_4^{27}x_5^{56}$& $x_1^{7}x_2^...]

Hence, the monomials $\tilde w_t,\, 1 \leqslant t \leqslant 8$, are strictly inadmissible.
\end{proof}
\begin{proof}[Proof of Proposition $\ref{mddb61}$] Let $x \in P_5^+(\tilde \omega)$ be an admissible monomial with $\tilde\omega := (4)|^2|(3)|^{3}|(1)$. Then $x = X_ry^2$ with $1 \leqslant r \leqslant 5$ and $y$ an admissible monomial of weight vector $(4)|(3)|^{3}|(1)$. By a direct computation we see that if $x \ne \tilde g_j,\, 1\leqslant j\leqslant  1176$, then either $x$ is one of monomials as given in Lemma \ref{bdab61} or there is a monomial $w$ as given in one of Lemmas \ref{bda72}, \ref{bda73} and \ref{bda74} such that $x = wz_1^{2^u}$ with $u$ nonnegative integers, $2\leqslant u \leqslant 5$, and $z_1$ a monomial of weight vector $(3)|^{6-u}$. By Theorem \ref{dlcb1}, $x$ is inadmissible. This contradicts the fact that $x$ is admissible. Hence, $x = \tilde g_j$ for some $j,\ 1\leqslant j\leqslant  1176$.
	
Now we prove that the set $\{[\tilde g_j]_{\tilde\omega}:1\leqslant j\leqslant  1176\}$ is linearly independent in $QP_5(\tilde\omega)$. 
	
Suppose there is a linear relation
\begin{equation}\label{ctda61}
\tilde{\mathcal S}:= \sum_{1\leqslant j \leqslant 1176}\gamma_j\tilde g_j \equiv_{\tilde\omega} 0,
\end{equation}
where $\gamma_j \in \mathbb F_2$.
Let $\tilde u_i,\, 1\leqslant i \leqslant 45$, be as in Subsubsection \ref{sss72} and the homomorphism $p_{(i;I)}:P_5\to P_4$ which is defined by \eqref{ct23} for $k=5$. From Lemma \ref{bdm}, we see that $p_{(i;I)}$ passes to a homomorphism from $QP_5(\tilde\omega)$ to $QP_4(\tilde\omega)$. By a routine computation we see that $p_{(i;I)}(\tilde{\mathcal S}) \equiv_{\tilde \omega}0$ for all $(i;I)\in \mathcal N_5$ if and only if 
\begin{equation*}
\tilde{\mathcal S}= \sum_{1171\leqslant j \leqslant 1176}\gamma_j\eta_{j} \equiv_{\tilde\omega} 0.
\end{equation*}
The leading monomial of $\eta_j$ is $\tilde g_j$ with $1171\leqslant j \leqslant 1176$. 

The above facts imply that the monomials $\tilde g_j$ with $1\leqslant j \leqslant 1170$ are admissible, $\dim \widetilde{QP}_5((4)|^2|(3)|^{3}|(1)) = 1395$ and  $\dim \widetilde{SF}_5((4)|^2|(3)|^3|(1))\leqslant 6$. The proposition is proved.
\end{proof}

Combining the above results we obtain the following.

\begin{corls} We have $$\dim\widetilde{QP}_5((4)|^2(3)|^3|(1)) = 1395 \mbox{ and }  1984 \leqslant \dim (QP_5)_{128} \leqslant 1990.$$
\end{corls}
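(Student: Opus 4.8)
The plan is to read off $\dim (QP_5)_{128}$ from the weight-vector decomposition of $(QP_5)_{128}$ and to bound the one summand whose dimension is not yet known exactly. First I would apply Lemma~\ref{bdd5} with $d=7$ together with \eqref{ct2.1}: since the only weight vectors of degree $128$ carrying admissible monomials are $(2)|(1)|^{6}$, $(4)|(2)|^{5}$ and $(4)|^2|(3)|^{3}|(1)$, we get
\[
(QP_5)_{128}\;\cong\;QP_5((2)|(1)|^{6})\,\oplus\,QP_5((4)|(2)|^{5})\,\oplus\,QP_5((4)|^2|(3)|^{3}|(1)),
\]
so it is enough to treat the three summands one at a time.

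For the first two summands there is nothing new to do: Theorem~\ref{mdd51} is valid for every $d\geqslant 5$ and gives $\dim QP_5((2)|(1)|^{6})=124$, and Theorem~\ref{mdd52} is valid for every $d\geqslant 6$ and gives $\dim QP_5((4)|(2)|^{5})=465$. For the third summand I would put $\omega=(4)|^2|(3)|^{3}|(1)$ and invoke the splitting $QP_5(\omega)\cong\widetilde{QP}_5(\omega)\oplus\widetilde{SF}_5(\omega)$. Proposition~\ref{mddb61}(i) provides a basis of $\widetilde{QP}_5(\omega)$ consisting of $1395$ admissible monomials, so $\dim\widetilde{QP}_5(\omega)=1395$; this is already the first half of the corollary. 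Proposition~\ref{mddb61}(ii) gives $\dim\widetilde{SF}_5(\omega)\leqslant 6$, and $\dim\widetilde{SF}_5(\omega)\geqslant 0$ is automatic, hence $1395\leqslant\dim QP_5(\omega)\leqslant 1401$. Adding the three contributions,
\[
1984=124+465+1395\;\leqslant\;\dim (QP_5)_{128}\;\leqslant\;124+465+1401=1990,
\]
which is the second half of the corollary.

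So within the corollary itself there is no real obstacle — it is a formal consequence of Theorems~\ref{mdd51}, \ref{mdd52} and Proposition~\ref{mddb61}. The substantive difficulty is entirely in Proposition~\ref{mddb61} and its auxiliary lemmas, and it has two parts. The first is showing, via the strict-inadmissibility computations of Lemma~\ref{bdab61} (and Lemmas~\ref{bda72}--\ref{bda74}), that every admissible monomial in $P_5^+$ of weight vector $\omega$ appears in the explicit finite list $\tilde g_1,\dots,\tilde g_{1176}$; this is the standard-but-long bookkeeping that underlies all the cases in the paper. The second, and the genuinely stubborn point, is to compute $\widetilde{SF}_5(\omega)=\bigcap_{(i;I)\in\mathcal N_5}\mbox{Ker}\big(p_{(i;I)}^{(\omega)}\big)$ well enough to bound its dimension by $6$: this is exactly what forces the answer into the interval $[1984,1990]$ rather than a single value. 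Pinning $\dim\widetilde{SF}_5(\omega)$ down exactly would close the gap, but — as with $(QP_5)_{64}$, where $\dim\widetilde{SF}_5((4)|(2)|^3)$ could only be squeezed into $[8,20]$ by hand — that last step seems to require a machine computation and is left open.
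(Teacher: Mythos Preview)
Your argument is correct and matches the paper's approach: the corollary is stated there without proof, simply as ``Combining the above results,'' and your derivation via the weight-vector decomposition (Lemma~\ref{bdd5}), Theorems~\ref{mdd51} and~\ref{mdd52}, the splitting $QP_5(\omega)\cong\widetilde{QP}_5(\omega)\oplus\widetilde{SF}_5(\omega)$, and Proposition~\ref{mddb61} is exactly that combination. One small slip in your closing commentary: the example where $\dim\widetilde{SF}_5((4)|(2)|^3)$ is only pinned to $[8,20]$ occurs in degree $32$, not $64$ (in degree $64$ the paper actually determines $\dim\widetilde{SF}_5((4)|^2|(3)|^2|(1))=20$ exactly).
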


\subsubsection{The admissible monomials of weight vector $(4)|^2|(3)|^{3}|(1)$ in $P_5$}\label{sss72}\
 
\medskip
$B_4((4)|^2|(3)|^{3}|(1)) = B_4^+((4)|^2|(3)|^{3}|(1))$ is the set of 45 monomials $\tilde u_t,\, 1 \leqslant t \leqslant 45$, which are determined as follows:
 
\medskip
\centerline{% [inline block 3: 12 envs, 61737 chars -> data_tex | \begin{tabular}{lll} $\tilde u_{1} =  x_1^{3}x_2^{31}x_3^{31}x_4^{63}$& $\tilde u_{2} =  x_1^{3}x_2^{31}x_3^{63}x_4^{31}...]
}

 \subsubsection{Generators of $\widetilde {SF}_5((4)|^2|(3)|^3|(1))$}\label{sss73}
 
\begin{align*}
\eta_{1171} &= x_1x_2^{15}x_3^{23}x_4^{27}x_5^{62} + x_1x_2^{15}x_3^{30}x_4^{23}x_5^{59} + x_1^{3}x_2^{7}x_3^{29}x_4^{27}x_5^{62} + x_1^{3}x_2^{7}x_3^{29}x_4^{30}x_5^{59}\\
&\quad + x_1^{3}x_2^{13}x_3^{30}x_4^{23}x_5^{59} + x_1^{3}x_2^{15}x_3^{21}x_4^{27}x_5^{62} + x_1^{3}x_2^{15}x_3^{21}x_4^{30}x_5^{59} + x_1^{3}x_2^{15}x_3^{23}x_4^{27}x_5^{60}\\
&\quad + x_1^{7}x_2^{3}x_3^{27}x_4^{29}x_5^{62} + x_1^{7}x_2^{7}x_3^{25}x_4^{30}x_5^{59} + x_1^{7}x_2^{7}x_3^{27}x_4^{27}x_5^{60} + x_1^{7}x_2^{9}x_3^{23}x_4^{27}x_5^{62}\\
&\quad + x_1^{7}x_2^{11}x_3^{21}x_4^{27}x_5^{62} + x_1^{7}x_2^{11}x_3^{21}x_4^{30}x_5^{59} + x_1^{7}x_2^{11}x_3^{23}x_4^{27}x_5^{60} + x_1^{15}x_2x_3^{23}x_4^{27}x_5^{62}\\
&\quad + x_1^{15}x_2x_3^{30}x_4^{23}x_5^{59} + x_1^{15}x_2^{3}x_3^{21}x_4^{27}x_5^{62} + x_1^{15}x_2^{3}x_3^{21}x_4^{30}x_5^{59} + x_1^{15}x_2^{3}x_3^{23}x_4^{27}x_5^{60}\\
&\quad + x_1^{15}x_2^{15}x_3^{16}x_4^{23}x_5^{59},\\
\eta_{1172} &= x_1x_2^{15}x_3^{23}x_4^{59}x_5^{30} + x_1x_2^{15}x_3^{30}x_4^{55}x_5^{27} + x_1^{3}x_2^{7}x_3^{29}x_4^{59}x_5^{30} + x_1^{3}x_2^{7}x_3^{29}x_4^{62}x_5^{27}\\
&\quad + x_1^{3}x_2^{13}x_3^{30}x_4^{55}x_5^{27} + x_1^{3}x_2^{15}x_3^{21}x_4^{59}x_5^{30} + x_1^{3}x_2^{15}x_3^{21}x_4^{62}x_5^{27} + x_1^{3}x_2^{15}x_3^{23}x_4^{59}x_5^{28}\\
&\quad + x_1^{7}x_2^{3}x_3^{27}x_4^{61}x_5^{30} + x_1^{7}x_2^{7}x_3^{25}x_4^{62}x_5^{27} + x_1^{7}x_2^{7}x_3^{27}x_4^{59}x_5^{28} + x_1^{7}x_2^{9}x_3^{23}x_4^{59}x_5^{30}\\
&\quad + x_1^{7}x_2^{11}x_3^{21}x_4^{59}x_5^{30} + x_1^{7}x_2^{11}x_3^{21}x_4^{62}x_5^{27} + x_1^{7}x_2^{11}x_3^{23}x_4^{59}x_5^{28} + x_1^{15}x_2x_3^{23}x_4^{59}x_5^{30}\\
&\quad + x_1^{15}x_2x_3^{30}x_4^{55}x_5^{27} + x_1^{15}x_2^{3}x_3^{21}x_4^{59}x_5^{30} + x_1^{15}x_2^{3}x_3^{21}x_4^{62}x_5^{27} + x_1^{15}x_2^{3}x_3^{23}x_4^{59}x_5^{28}\\
&\quad + x_1^{15}x_2^{15}x_3^{16}x_4^{55}x_5^{27},\\
\eta_{1173} &= x_1^{3}x_2^{7}x_3^{29}x_4^{27}x_5^{62} + x_1^{3}x_2^{15}x_3^{21}x_4^{27}x_5^{62} + x_1^{7}x_2^{7}x_3^{25}x_4^{27}x_5^{62} + x_1^{7}x_2^{15}x_3^{17}x_4^{27}x_5^{62},\\
\eta_{1174} &= x_1^{7}x_2^{3}x_3^{29}x_4^{27}x_5^{62} + x_1^{7}x_2^{7}x_3^{25}x_4^{27}x_5^{62} + x_1^{15}x_2^{3}x_3^{21}x_4^{27}x_5^{62} + x_1^{15}x_2^{7}x_3^{17}x_4^{27}x_5^{62},\\
\eta_{1175} &= x_1^{7}x_2^{7}x_3^{25}x_4^{27}x_5^{62} + x_1^{15}x_2^{15}x_3^{17}x_4^{19}x_5^{62},\\
\eta_{1176} &= x_1^{7}x_2^{7}x_3^{27}x_4^{25}x_5^{62} + x_1^{15}x_2^{15}x_3^{19}x_4^{17}x_5^{62}.
\end{align*}

 %===============================
 {}

\end{document}